\pdfoutput=1
\documentclass[11pt,letterpaper]{amsart} %a4paper vs letterpaper

\usepackage{comment}
\usepackage{ifluatex}
\usepackage[english]{babel} % English Language settings

\usepackage{amscd,amsmath,amssymb,amsthm} % AMS Math
\usepackage{multirow}                     % For Multiple Rows in Tables

\ifluatex
  \usepackage{fontspec}
\else
  \usepackage[utf8]{inputenc} % Allows to use UTF8 encoded symbols directly (e.g.  ä ö ü ß é)
  \usepackage{lmodern}
  \usepackage[T1]{fontenc}
\fi

\usepackage[margin=1in]{geometry}    %%%% Changes for Duke format on 16.04.2025

\usepackage{mathtools}
\usepackage{bm} %bold symbols
\usepackage{dutchcal} %% lower case caligraphic  %%16.05.

\usepackage{slashed} % Slashed symbols e.g. to typeset Dirac operators
\usepackage[babel=true]{microtype}
\usepackage[autostyle=true]{csquotes} % Correctly typeset via quotes via \enquote
\usepackage[dvipsnames]{xcolor}
\usepackage[biblatex=true]{embrac}

\usepackage[%
bookmarks=true,
colorlinks,
linkcolor=blue,
urlcolor=blue,
citecolor=blue,
plainpages=false,
pdfpagelabels,
final,
breaklinks=true,
pdfusetitle,
]{hyperref}
\usepackage[citestyle=numeric-comp,
            bibstyle=numeric-comp,
            backend=biber,
            url=false,
            doi=true,
            isbn=false,
            giveninits=true,
            maxbibnames=100,
            sorting=nyt]{biblatex}

\AtEveryBibitem{\clearfield{month}}
\AtEveryBibitem{\clearfield{day}}
\AtEveryBibitem{%
  \ifentrytype{thesis}
    {}
    {
      \ifentrytype{online}{}
      {
      \clearfield{url}%
      \clearfield{urldate}%
      }
    }%
}
            
\usepackage{tikz}
\usetikzlibrary{cd} % Commutative diagrams

\usepackage[all]{xy}

\usepackage[shortlabels]{enumitem}
\newlist{myenumi}{enumerate}{1}
\setlist[myenumi,1]{label=\upshape(\roman*)}
\newlist{myenuma}{enumerate}{1}
\setlist[myenuma,1]{label=\upshape(\alph*)}
\usepackage{color}
\usepackage[textwidth=0.8in]{todonotes}
\usepackage{thmtools}
\declaretheorem[name=Theorem, numberwithin=section]{theorem}
\declaretheorem[name=Theorem, numbered=no]{theorem*}
\declaretheorem[name=Lemma,numberlike=theorem]{lemma}
\declaretheorem[name=Lemma,numbered=no]{lemma*}
\declaretheorem[name=Corollary,numberlike=theorem]{cor}
\declaretheorem[name=Proposition,numberlike=theorem]{prop}
\declaretheorem[name=Definition,numberlike=theorem, style=definition]{definition}

\declaretheorem[name=Example, numberlike=theorem, style=remark]{example}
\declaretheorem[name=Remark, numberlike=theorem, style=remark]{rem}

\declaretheorem[name=Theorem]{thmx}
\declaretheorem[name=Corollary, numberlike=thmx]{corx}

\numberwithin{equation}{section}
\allowdisplaybreaks[1]
\usepackage[noabbrev]{cleveref} %Important to load after numberwithin!
\crefname{figure}{Figure}{Figures}
\crefname{table}{Table}{Tables}
\crefname{theorem}{Theorem}{Theorems}
\crefname{thmx}{Theorem}{Theorems}
\crefname{lemma}{Lemma}{Lemmas}
\crefname{definition}{Definition}{Definitions}
\crefname{setup}{Setup}{Setups}
\crefname{conjecture}{Conjecture}{Conjectures}
\crefname{question}{Question}{Questions}
\crefname{cor}{Corollary}{Corollaries}
\crefname{corx}{Corollary}{Corollaries}
\crefname{prop}{Proposition}{Propositions}
\crefname{example}{Example}{Examples}
\crefname{rem}{Remark}{Remarks}
\crefname{section}{Section}{Sections}
\crefname{subsection}{Subsection}{Subsections}
\crefname{chapter}{Chapter}{Chapters}
\crefname{appendix}{Appendix}{Appendices}
\crefdefaultlabelformat{#2\textup{#1}#3}
\creflabelformat{enumi}{(#2#1#3)}

\usepackage{crossreftools}
\usepackage{xparse}

\usepackage{ourmacros}

\newcommand{\Hol}{{\operatorname{Hol}}}
\newcommand{\vertical}{\mathrm{vert}}
\newcommand{\incl}{\mathrm{incl}}

\title{Fibrewise Orbifold Resolutions with Applications to $\GTwo$-Moduli Spaces}
\subjclass[2020]{55Q52, 57R22, 58D27  (Primary); 53C29, 55P62 (Secondary)}

\hypersetup{
  pdfauthor={Thorsten Hertl}
}

\author{Thorsten Hertl}
\address[T.~Hertl]{School of Mathematics and Statistics, The University of Melbourne, Australia }
\email{\href{mailto:thorsten.hertl@unimelb.edu.au}{thorsten.hertl@unimelb.edu.au}}
\urladdr{\href{https://thorsten-hertl.github.io/}{https://thorsten-hertl.github.io/}}

\date{\today}

\begin{document}

\begin{abstract}
  By resolving the singularities of tailor-made orbifolds via twisted families of blow-ups, we construct manifold bundles $M \rightarrow E \rightarrow S^2$.
  Using tools from real homotopy theory, we show that these bundles determine a free subgroup in $\pi_2(B\mathrm{hAut}(M)_0)$.
  The proof relies on a generalisation of Sullivan's result, which describes the real homotopy groups of the monoid of homotopy automorphisms $\mathrm{hAut}(X)$ in terms of derivations of the minimal model of $X$, to the monoid $\mathrm{hAut}_A(X)$ of relative homotopy automorphisms.

  As an application, we prove that the moduli space of torsion-free $\mathrm{G}_2$-structures arising from many generalised Kummer constructions contains a free subgroup of positive rank in its second homotopy group.
\end{abstract}

\maketitle

%\tableofcontents

\section{Introduction}\label{Section - Introduction}

A $\GTwo$-structure is a differential form $\varphi \in \Omega^3(M)$ on a seven-dimensional manifold $M^7$ that is \emph{positive} in the sense that the symmetric bilinear form $g_{\varphi}$ defined implicitly by the formula
\begin{equation*}
    g_{\varphi}(v,w) \cdot \vol_{g_\varphi} = \iota_v(\varphi) \wedge \iota_w(\varphi) \wedge \varphi
\end{equation*}
is positive definite.
If $\varphi$ is parallel with respect to the Levi-Civita connection of its underlying metric $g_\varphi$, then we call $\varphi$ \emph{torsion-free} and the pair $(M,\varphi)$ a $\GTwo$-manifold because the holonomy group $\mathrm{Hol}(g_\varphi)$ is then contained in $\GTwo$.
For a closed manifold, full holonomy $\Hol(g_\varphi) = \GTwo$ is attained if and only if, in addition, its fundamental group $\pi_1(M)$ is finite \cite[Proposition 10.2.2]{Joyce2000SpecialHolonomy}.

The group of diffeomorphisms $\Diff(M)$ and its path component of the identity $\Diff(M)_0$ act on the space of all torsion-free $\GTwo$-structures $\tfGTwoStr(M)$ via pull back.
In \cite{Joyce1996CompactG2I}, Joyce not only produced the first example of a closed $\GTwo$-manifold, but also proved that the \emph{moduli space} $\GTwoModuli{M}$ of each closed seven-dimensional manifold is itself a smooth manifold\footnote{If $M$ is not a $\GTwo$-manifold, then the moduli space is empty, of course.} of dimension $b^3(M)$, the third Betti number of $M$, by showing that the period map $\GTwoModuli{M} \rightarrow H^3(M;\R)$, which sends an equivalence class of a torsion-free $\GTwo$-structure to its de Rham cohomology class, is a local diffeomorphism.

In contrast to the moduli spaces of $\KThree$-surfaces, where the analogous period map is an embedding with an explicitly described image, Joyce's result is essentially of local nature and cannot be used to deduce much about the global topological features of $\GTwo$-moduli spaces. 
Thus, understanding global topological properties of $\GTwo$-moduli spaces has become an active field of investigation over the last decade, see \cite{crowley2015newinvariants}, \cite{crowley2025analytic}, \cite{scaduto2020computing}, and \cite{crowley2025PathComponents}; see also \cite{thakar2026g2manifold} for a recent development on the fundamental group of the full quotient $\tfGTwoStr(M)/\Diff(M)$.  

The main purpose of this article is to study $\GTwo$-moduli spaces from the view point of real homotopy theory.
Our first main result shows that $\GTwo$-moduli spaces of generalised Kummer constructions in the sense of Joyce \cite[Chapter 11]{Joyce2000SpecialHolonomy} often have non-trivial real second homotopy groups.
\begin{thmx}\label{Main Thm: Detection G2 Moduli Space}
    Let $M^7$ be a simply connected generalised Kummer construction obtained from $T^7/\Gamma$ by resolving its singularities.
    Let $\mathrm{N}$ be the number of path components $S$ of the singular set of $T^7/\Gamma$ that satisfy the following two properties:
    \begin{itemize}
        \item[(i)] $S$ has a tubular neighbourhood isometrically isomorphic to $T^3 \times D^4/\Z_2$, where $\Z_2$ acts antipodally on $D^4$.
        \item[(ii)] There is a $S'$ different from $S$ satisfying (i) such that their real homology classes $[S], [S'] \in H_3(T^7/\Gamma;\R)$ generate the same vector space.
    \end{itemize}
    Then 
    \begin{equation*}
        \pi_2( \GTwoModuli{M} )\otimes \R \supseteq \R^{\mathrm{N}}.
    \end{equation*}
\end{thmx}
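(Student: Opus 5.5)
The approach is to produce elements of $\pi_2(\GTwoModuli{M})$ from families of torsion-free $\GTwo$-structures over $S^2$, and to detect them through the action of diffeomorphisms on rational homotopy. The natural map
\begin{equation*}
\GTwoModuli{M} = \tfGTwoStr(M)/\Diff(M)_0 \longrightarrow \tfGTwoStr(M)/\!\!/\Diff(M)_0 \longrightarrow B\Diff(M)_0 \longrightarrow B\mathrm{hAut}(M)_0
\end{equation*}
is available because $\Diff(M)_0$ acts with finite stabilisers, namely the isometries. Rationally, the homotopy quotient therefore agrees with the honest quotient. So it suffices to construct $\mathrm N$ classes in $\pi_2$ of the moduli space whose images in $\pi_2(B\mathrm{hAut}(M)_0)\otimes\R$ are linearly independent.

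The steps, in order:

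\emph{(1) Construction.} For each component $S\cong T^3$ with normal type $D^4/\Z_2$, Joyce's resolution glues in $T^3\times T^*S^2$ (Eguchi--Hanson), using a choice of hyperkähler triple, i.e.\ a point of $S^2$ parametrising the complex structure. Letting this choice vary over $S^2$ gives a fibrewise resolution $M\to E\to S^2$, in which the Eguchi--Hanson factor is twisted by the Hopf-type clutching. Joyce's gluing theorem works uniformly in compact families, so we get a continuous $S^2$-family of torsion-free $\GTwo$-structures on the fibres, and hence a map $S^2\to \GTwoModuli{M}$. Its composite to $B\mathrm{hAut}(M)_0$ classifies $E$.

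\emph{(2) Detection.} We use the relative Sullivan theorem from the abstract: $\pi_*(B\mathrm{hAut}_A(X)_0)\otimes\R$ is computed by the homology of derivations of a (relative) minimal model. Take $A$ to be the complement of the resolved neighbourhoods, which is untouched by the family. The bundle $E$ is then trivial over $A$ and its clutching is supported in $T^3\times T^*S^2$. Its class corresponds to a degree-$1$ derivation (for $\pi_2(B\cdot)=\pi_1$) sending the exceptional class $e_S\in H^2$ to something like the class dual to $[S]$ times a $3$-form, or sending degree-$3$ generators to multiples of $e_S$-products. Independence of the $\mathrm N$ classes reduces to showing that these derivations are non-trivial modulo inner derivations, and independent in $\mathrm{Der}$-homology.

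\emph{(3) The role of condition (ii).} A single derivation concentrated on $e_S$ may be cohomologous to zero, i.e.\ realised by an inner derivation or by a homotopy in the model. Pairing $S$ with $S'$ whose classes span the same line in $H_3$ gives a well-defined nonzero difference, which is a degree-$3$ class killed in the complement. I expect this to produce a nontrivial derivation in the relative model that survives to $\mathrm{hAut}(M)_0$.

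\emph{Main obstacle.} The hardest step is (2)/(3): computing enough of the minimal model of $M$, which is formal since $M$ is a compact manifold with holonomy $\GTwo$? This is not known in general, so I would use the cohomology presentation from the Kummer construction. One must show that the $\mathrm N$ derivations are linearly independent in $H_1(\mathrm{Der})$, and in particular not boundaries. First I would try an invariant: evaluate the fibre-integrated characteristic classes $\int_{E/S^2} p_1\cdot(\cdot)$ or the Massey/Whitehead-type pairing $H^2\otimes H^3\to H^5$, restricted to $e_S$. Then I would verify that the resulting $\mathrm N\times\mathrm N$ matrix is nondegenerate.
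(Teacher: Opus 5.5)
Your step (1) is the right input: these are the families $E_{M,\{S\}}$ built from $S\times\mathcal{DO}(-2)$, which the paper imports from \cite{crowley2025PathComponents}. The argument after that has two genuine gaps.

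\textbf{The passage to the moduli space.} The natural map runs from the homotopy quotient to the honest quotient, $\tfGTwoStr(M)/\!\!/\Diff(M)_0\to\GTwoModuli{M}$, not the other way. So there is no natural map $\GTwoModuli{M}\to B\hAut(M)_0$ along which to detect. Finite stabilisers make this map at best a rational \emph{homology} equivalence, and that says nothing about $\pi_2\otimes\R$ for non-nilpotent spaces. For example, for a reflection $\Z_2\curvearrowright S^2$, the map $S^2/\!\!/\Z_2\to S^2/\Z_2\cong D^2$ is a rational homology equivalence, yet it kills $\pi_2\otimes\mathbb{Q}\cong\mathbb{Q}$. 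The correct chain of reasoning is this. The families give classes in $\pi_2(\hGTwoModuli{M})$. Independence of their images in $\pi_2(B\hAut(M)_0)\otimes\R$ gives independence there. One then needs the nontrivial fact that $\pi_2(\hGTwoModuli{M})\to\pi_2(\GTwoModuli{M})$ is injective, which the paper takes from \cite[Theorem B]{crowley2025PathComponents}.

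\textbf{The detection argument, which is the more serious gap.} Steps (2)--(3) carry the whole content and are left at ``I expect''. Non-triviality relative to the complement $A$ is just the local computation; the issue is survival in the absolute group $\pi_1(\hAut(M)_0)\otimes\R$. Your candidate derivations are also off by degree: the relevant one kills $e_S$ and acts on degree-$3$ generators. Three ideas are missing.
\begin{enumerate}
\item[(a)] The twist can be made relative to the boundary for $|k|\le 2$. It then corresponds to a nonzero multiple of $a_2\otimes b_3^\vee$ on $\Lambda[a_2,b_3\,|\,db_3=a_2^2]$. Extended by zero over the pushout model $\Omega(U)\oplus_{\Omega(\partial U)}\bigoplus_S\mathsf{M}_{S^2}\otimes\Omega(S)$, it sends $b_{3,S}\mapsto\xi a_{2,S}$ and kills every closed form. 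In particular it is invisible on $H^*(M)$.
\item[(b)] Condition (ii) enters through Poincar\'e duality on the orbifold. Proportional classes $[S_0],[S_1]$ give proportional Thom classes in $H^4(X)$. The blow-down pulls these back to $a_{2,S}^2/k_S$, so $\nu_0a_{2,S_0}^2+\nu_1a_{2,S_1}^2=0$ in $H^4(M)$. Hence $\mathrm{P}_3\mathsf{M}_M$ has a generator $w$ with $dw$ equal to this element; no formality is needed. The image $\mu(w)$ has $b_3$-part $\nu_0b_{3,S_0}+\nu_1b_{3,S_1}$, so $\sum_S\lambda_S\theta_S$ takes the value $\xi(\lambda_{S_0}\nu_0a_{2,S_0}+\lambda_{S_1}\nu_1a_{2,S_1})$ on $w$.
\item[(c)] You must exclude that this equals $\delta F$ for some degree-$2$ derivation $F$. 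Such an $F$ takes constant values $c_S$ on the $a_{2,S}$. Modulo $\mathsf{B}(M)$, it contributes $-2(\nu_0c_{S_0}a_{2,S_0}+\nu_1c_{S_1}a_{2,S_1})$ on $w$. This \emph{can} cancel the twist, so (ii) alone does not suffice. What forces all $c_S=0$ is disjointness: $a_{2,S}a_{2,S'}=0$ for $S\neq S'$. The killing generators $s(a_{2,S}a_{2,S'})$ map to closed forms, where the twist vanishes, while $\delta F$ equals $-(c_Sa_{2,S'}+c_{S'}a_{2,S})$ modulo $\mathsf{B}(M)$.
\end{enumerate}

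Your fallback invariants do not replace this. $\int_{E/S^2}p_1\cdot(-)$ is not a fibre-homotopy invariant, so it cannot be the detector in $\pi_2(B\hAut(M)_0)$ that your plan needs. Nothing you wrote connects the pairing $H^2\otimes H^3\to H^5$ to the twist.

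Finally, before any of this you must resolve the components failing (i) to obtain a tailor-made orbifold $X$. You must also check that homological partners survive the passage from $T^7/\Gamma$ to $X$.
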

 The conditions in Theorem \ref{Main Thm: Detection G2 Moduli Space} are easy to check in practice.
 We indicate this by discussing in Section \ref{Section: G2 Applications} below some of Joyce's examples in \cite{Joyce1996CompactG2II}.
 The results are summarised in the following table:
\begin{figure}[h]
    \begin{tabular}{ |p{5cm}||p{0.9cm}|p{1cm}|p{1cm}|p{1cm}|p{1cm}|p{1cm}|p{1cm}|p{1cm}|  }
        \hline
      \multicolumn{9}{|c|}{Examples of generalised Kummer constructions from \cite{Joyce1996CompactG2II}} \\
        \hline
 Example number & $3$  & $4$ & $6$ & $7$ & $9$ & $11$ & $13$ & $14$ \\
 \hline
  $\Z^{\mathrm{N}} \subseteq \pi_2(\GTwoModuli{M})$& $\Z^{12}$ & $\Z^8$ & $\Z^4$ & $\Z^2$ & $\Z^{10}$& $\Z^4$ & $\Z^2$ & $\Z^2$ \\
 \hline
\end{tabular}
%\caption{List of Joyce's Examples presented in \cite{Joyce1996CompactG2II} together with the rank $\mathrm{N}$ of the subgroup $\Z^N$ of $\pi_2$ of their moduli spaces provided by Theorem \RefHere}
\label{fig - Table Examples}
\end{figure}

The main challenge in constructing non-trivial maps into the moduli space $\GTwoModuli{M}$ stems from the fact it does not have a universal family.
To remedy this defect, the author and his collaborators introduced, in \cite{crowley2025PathComponents}, the \emph{homotopy moduli space} $\hGTwoModuli{M}$.
This space has the universal property that every family of $\GTwo$-manifolds parametrised by a base space $B$ (up to a suitable notion of equivalence) gives rise to a unique homotopy class of continuous maps $B \rightarrow \hGTwoModuli{M}$.
It further comes with two comparison maps $\hGTwoModuli{M} \rightarrow B\Diff(M)$ and $\hGTwoModuli{M} \rightarrow \GTwoModuli{M}$, the latter inducing an injective homomorphism on all higher homotopy groups, see \cite[Theorem B]{crowley2025PathComponents}.

With the homotopy moduli space at hand, we can sketch a proof of Theorem \ref{Main Thm: Detection G2 Moduli Space}:
The construction carried out in \cite[Section 6]{crowley2025PathComponents} yields, for each singularity component $S$ of the flat orbifold $T^7/\Gamma$ satisfying condition (i), a fibre bundle\footnote{In \cite{crowley2025PathComponents}, this bundle would be denoted by $E_{M,\{S\}}$.} $(E_S,\bm{\varphi}) \rightarrow S^2$ with $\{\varphi_b\}_{b \in S^2} = \bm{\varphi} \in \Omega^3(T^{\vertical,\vee}E_{S})$ a fibre-wise torsion-free $\GTwo$-structure.
Its classifying map into the homotopy moduli space yields a unique homotopy class of continuous maps
\begin{equation}\label{eq: classifying maps of functors}
    \xymatrix{  & \hGTwoModuli{M} \ar[d]& \\ 
    S^2 \ar[r]_-{f_{E_S}} \ar[ru]^-{f_{E_S,\bm{\varphi}}} & B\Diff(M)_0 \ar[r] & B\hAut(M)_0   }
\end{equation}
 and Theorem \ref{Main Thm: Detection G2 Moduli Space} is now a consequence of the following differential topological result.
\begin{thmx}\label{Main Thm: G2 Manifold fibration detection}
    Let $M$ be a simply connected generalised Kummer construction obtained from $T^7/\Gamma$ as in Theorem \ref{Main Thm: Detection G2 Moduli Space} and $\{S\}$ the set of all singularity components satisfying condition (i) and (ii).
    Then the set $\{[f_{E_S}] \in \pi_2(B\hAut(M)_0)\}$ is linearly independent in $\pi_2(B\hAut(M))\otimes \R$.
    In particular, 
    \begin{equation*}
        \mathrm{im}\bigl[  \pi_2(B\Diff(M)_0) \longrightarrow \pi_2(B\hAut(M)_0) \bigr] \otimes \R \supseteq  \R^\mathrm{N} .
    \end{equation*}
\end{thmx}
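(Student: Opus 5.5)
We plan to detect the classes $[f_{E_S}]$ with Sullivan's description of the real homotopy of $B\hAut(M)_0$, or rather its relative version announced in the abstract. By Sullivan, $\pi_2(B\hAut(M)_0)\otimes\R \cong \pi_1(\hAut(M)_0)\otimes \R$ is the degree-one homology $H_1(\mathrm{Der}^+(\Lambda V_M))$ of the positive-degree derivations of the minimal model $(\Lambda V_M,d)$. Dually, a bundle $M \to E \to S^2$ has a relative minimal model $(\Lambda(x)/(x^2)\otimes \Lambda V_M, D)$ with $|x|=2$. In this model $D = d + x\cdot\theta$, where $\theta$ is a derivation of degree $-1$ on $\Lambda V_M$ (shifting degree by $-2$ and multiplied by $x$). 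The class $[\theta]$ is exactly the image of $[f_E]$. So the task is to compute $\theta_S$ for each $E_S$ and show that these derivation classes are linearly independent.

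Step 1 is to localise. The bundle $E_S$ is trivial away from a neighbourhood of the resolved component $S$. Over that neighbourhood the fibre is $T^3\times X$, where $X$ is the Eguchi--Hanson space resolving $D^4/\Z_2$, and the $S^2$-family twists the blow-up by rotating the hyperkähler $S^2$ of complex structures. The fibrewise exceptional sphere $\Sigma_S$ then has normal bundle varying over $S^2$. We would work relative to the complement $A$ of the resolved neighbourhoods, which is why $\hAut_A(X)$ is needed. Using the relative Sullivan theorem, we reduce to computing the derivation on a model of the pair $(M, A)$. Concretely, we evaluate how the family acts on cohomology with twisted coefficients, i.e.\ compute the degree-$(-1)$ derivation $\theta_S$. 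The expected answer is that $\theta_S$ sends a degree-3 generator dual to $[S]$ (or $[T^3\times \Sigma_S]$-related classes) to the Poincaré dual class $e_S\in H^2$ of the exceptional divisor. More precisely, $\theta_S$ should be nonzero only on generators involving $e_S$, with value computed from the Euler class of the fibrewise normal bundle over $S^2$.

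Step 2 is independence. We need the derivations $\theta_S$ to remain independent in $H_1(\mathrm{Der}(\Lambda V_M))$, i.e.\ not be homologous to combinations that are $d$-boundaries $[d,\eta]$. Since the $e_S$ are linearly independent in $H^2(M;\R)$, the candidates differ in their targets. Condition (ii), that another component $S'$ has proportional homology class, is what should make $\theta_S$ non-trivial. It supplies a degree-3 class that $\theta_S$ can hit in a way not absorbed by boundaries; for instance, it produces a Massey-type relation $e_S\cdot[S]^\vee - e_{S'}\cdot[S']^\vee$ that can be killed by a degree-4 generator of $V_M$ on which the derivation acts. Without (ii), we would expect $\theta_S$ to be a boundary. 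To certify non-triviality, we would evaluate against an invariant such as the induced map on $\pi_*(M)\otimes\R$ or on a quotient of $V_M$ in low degrees, where boundaries vanish.

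The main obstacle is Step 2: constructing enough of the minimal model of $M$ in degrees $\le 4$, from the orbifold cohomology of $T^7/\Gamma$ plus the exceptional classes, to show that the derivations $\theta_S$ are not boundaries. We would first try to exhibit a linear functional on $H_1(\mathrm{Der})$, defined by the action on indecomposables $V^3\to V^2$ modulo the image of $[d,\cdot]$, and check that it separates the $\theta_S$ using (ii). The final inclusion then follows immediately, because the $f_{E_S}$ factor through $B\Diff(M)_0$ by \eqref{eq: classifying maps of functors}.
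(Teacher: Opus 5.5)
Your overall framework matches the paper's: Sullivan's derivation model, the $S^2$-family of twisted blow-ups, and detecting independence in low degrees of $\Lambda V_M$ via condition (ii). However, the proposal leaves open exactly the steps that carry the content, and the mechanism you sketch for them would not work as stated.

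\emph{Where $\theta_S$ lives and how (ii) enters.} You misplace the generators on which $\theta_S$ is non-zero. In the pushout model $\mathsf{A}(M)=\Omega(U)\oplus_{\Omega(\partial U)}\bigoplus_S \mathsf{M}_{S^2}\otimes\Omega(S)$, the twist derivation is $\xi\, a_{2,S}\otimes b_{3,S}^\vee$, where $db_{3,S}=a_{2,S}^2$.
\begin{itemize}
\item Closed $3$-forms have no $b_{3,S}$-component. Hence $\theta_S\circ\mu$ vanishes on $H^3(M)\subseteq V^3_M$, and no generator ``dual to $[S]$'' is involved.
\item $\theta_S\circ\mu$ is non-zero only on generators $s(q)$ with $q\in\ker(\cup\colon \mathsf{S}^2V^2_M\to H^4(M))$ whose primitive has a $b_{3,S}$-component.
\item Condition (ii) enters precisely by producing such a $q$. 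The Poincar\'e dual of $[S]$ pulls back to a non-zero multiple of $e_S^2$ under the blow-down, so (ii) gives $\nu e_S^2+\nu' e_{S'}^2=0$ in $H^4(M)$ with $\nu,\nu'\neq 0$.
\item This relation is killed by a \emph{degree-$3$} generator, on which $\theta_S$ takes the value $\xi\nu e_S$. Your Massey-type relation killed by a degree-$4$ generator is not the relevant one.
\end{itemize}

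\emph{The proposed invariant fails.} More seriously, the invariant you propose for certifying non-triviality, namely the induced map on indecomposables $V^3\to V^2$, does not vanish on boundaries.
\begin{itemize}
\item A derivation $\eta\in\mathrm{Der}_2(\Lambda V_M)$ may send $V^2$ to constants. Then $\delta\eta(s(q))=-\eta(q)$ is a non-zero \emph{linear} element: the contraction of $q$ with $\eta|_{V^2}$.
\item This is not a technicality. In $\mathsf{M}_{S^2}$ one has $a_2\otimes b_3^\vee=-\tfrac12\delta(1\otimes a_2^\vee)$, so the twist derivation itself is a boundary with non-zero linear part.
\item That is why a single twisted blow-up is trivial in $\pi_1(\mathrm{hAut}(D\mathcal{O}(k)))\otimes\R=0$. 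It is also why the paper works relative to the boundary and needs at least two components.
\end{itemize}

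The missing argument is that the $\theta_S$ remain independent modulo these contraction terms. The paper assumes a relation $\sum\lambda_S\theta_S\circ\mu=\delta(F)$ and argues in two stages.
\begin{itemize}
\item It evaluates on $s(e_Se_{S'})$ for $S\neq S'$; disjoint supports give $e_Se_{S'}=0$ in $\mathsf{A}(M)$. This forces the components of $F$ on all $e_T$ to vanish.
\item It then evaluates on $s(\nu e_{S_0}^2+\nu' e_{S_1}^2)$. The remaining contractions with $F|_{H^2(X)}$ vanish there, leaving $\lambda_{S_0}\nu e_{S_0}+\lambda_{S_1}\nu' e_{S_1}=0$, so $\lambda_{S_0}=\lambda_{S_1}=0$.
\end{itemize}

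\emph{Two further gaps.}
\begin{itemize}
\item You assume without proof that the local coefficient $\xi$ is non-zero, i.e.\ that $\mathsf{tw}$ is non-trivial in $\pi_1(\mathrm{hAut}_\partial(D\mathcal{O}(k)))\otimes\R\cong\R$. The paper proves this using Sasao's result that $\mathrm{PU}(3)\to\mathrm{hAut}(\CP^2)$ is a real equivalence, together with a $\Z_k$-quotient argument.
\item You do not explain how (ii), which is stated in $H_3(T^7/\Gamma;\R)$, survives the resolution of the other singularity types. The paper first partially resolves those singularities to obtain a tailor-made orbifold, checks that homological partners persist, and then applies Theorem~\ref{Main thm: Orbifold Robustness}.
\end{itemize}
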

The comparison map $\hGTwoModuli{M} \rightarrow B\Diff(M)_0$ classifies the functor that sends a $\GTwo$-family $(E,\bm{\varphi})$ to its underlying $M$-fibre bundle $E$, while $B\Diff(M)_0 \rightarrow B\hAut(M)_0$ classifies the forgetful functor that considers the (isomorphism class of the)  $M$ fibre bundle merely as a Serre fibration (up to fibre-homotopy equivalence).
Thus, the geometric interpretation of Theorem \ref{Main Thm: G2 Manifold fibration detection} is that the underlying topological families $E = \{M_b\}_{b \in S^2}$ are pair-wise fibre-homotopy inequivalent.

Theorem \ref{Main Thm: G2 Manifold fibration detection} can be derived from a robustness principle for orbifold resolutions, applicable to a convenient class of smooth orbifolds of dimension at least $4$.
To explain it, we need to generalise the notation from Theorem \ref{Main Thm: Detection G2 Moduli Space}.
Let $X$ be a closed, smooth orbifold in the sense of \cite{Satake1956Orbifolds} and $\mathcal{S} \subseteq X$ be a closed, smooth manifold satisfying the following properties:
\begin{itemize}
    \item[(i)] Each connected component $S \subseteq \mathcal{S}$ has a tubular neighbourhood inside $X$ of the form $\mathrm{Tub}(S) = S \times D^4/\Z_{k_S}$ with $k_S \in \{1,2\}$.
    \item[(ii)] $\mathcal{S}$ contains all singular points of $X$.
\end{itemize}
We call such a pair $(X,\mathcal{S})$ a \emph{tailor-made} orbifold\footnote{We remark that the set $\mathcal{S}$ is allowed to more than just singular points}.
Let $\mathsf{N} \subseteq \pi_0(\mathcal{S})$ be the \emph{nice} subset of path components that have a `homological partner' in the sense that $S \in \mathsf{N}$ if and only if there is a $S' \in \pi_0(\mathcal{S})$ different from $S$ such that the image of their fundamental classes generate the same subvector space $\R \cdot [S] = \R \cdot [S'] \subseteq H_{n-4}(X;\R)$.

By condition (ii), we can resolve the orbifold $X$ to a manifold $M$ by replacing the tubular neighbourhood $\mathrm{Tub}(S) \cong S \times D^4/\Z_{k_S}$ of each singularity component by $S \times D\mathcal{O}(\pm k_S)$, where $D\mathcal{O}(k)$ is the disc-bundle of the complex line bundle $\mathcal{O}(k) \rightarrow S^2$.
Resolving a fixed path component $S \in \pi_0(\mathcal{S})$ in a twisted fashion - that is, using the non-trivial fibre bundle $\mathcal{DO}(k_S) \rightarrow S^2$ defined in Subsection \ref{Subsection: Top Properties Blow Up Families} below instead of $D\mathcal{O}(k_S) \times S^2$ (and the remaining singularities in an untwisted fashion) - gives rise to a fibre bundle $M \rightarrow E_S \rightarrow S^2$, which is classified by a continuous map $f_{E_S} \colon S^2 \rightarrow B\Diff(M)_0$ that is unique up to homotopy.
\begin{thmx}\label{Main thm: Orbifold Robustness}
    Let $(X,\mathcal{S})$ be a tailor-made orbifold, let $\mathsf{N} \subseteq \pi_0(\mathcal{S})$ be the subset of components with a homological partner, and let $M$ be a resolution of $X$ obtained from the construction above.
    Then the set $\{[f_{E_S}] \in \pi_2(B\hAut(M)_0) \, : \, S \in \mathsf{N}\}$ is linearly independent in $\pi_2(B\hAut(M)_0) \otimes \R$.
    In particular,
    \begin{equation*}
        \mathrm{im}\bigl[  \pi_2(B\Diff(M)_0) \longrightarrow \pi_2(B\hAut(M)_0) \bigr] \otimes \R \supseteq  \R^{|\mathrm{N}|} .
    \end{equation*}
\end{thmx}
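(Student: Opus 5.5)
Our plan is to detect the classes $[f_{E_S}]$ rationally (really, over $\R$) through derivations of a minimal model, using the relative version of Sullivan's theorem announced in the abstract. Each bundle $E_S$ is trivial away from $S \times D\mathcal{O}(\pm k_S)$: on the complement $A := M \setminus \mathrm{int}(S\times D\mathcal{O}(\pm k_S))$ the clutching is the identity. The fibre-homotopy class of $E_S$ will therefore be recorded by an element of $\pi_2(B\hAut_A(M')) \otimes \R$, where $M'$ is the relevant piece or $M$ itself relative to a subspace. We will then push this forward to $\pi_2(B\hAut(M)_0)\otimes\R \cong \pi_1(\hAut(M)_0)\otimes \R$. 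Sullivan's theorem identifies $\pi_1(\hAut(M)_0)\otimes\R$ with $H_1$ of the derivation complex $\mathrm{Der}(\Lambda V_M)$, via degree $-1$ derivations modulo boundaries.

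The first step is to compute the derivation that represents $[f_{E_S}]$. Locally, the twisted family $\mathcal{DO}(k_S)\to S^2$ of blow-ups has as fibre the disc bundle of $\mathcal{O}(\pm k)$, which contains an exceptional sphere $\Sigma\cong S^2$. The clutching loop in $\hAut_\partial(D\mathcal{O}(k))$ is expected to act at the level of models by the derivation $\theta_S$ of degree $-1$ that sends the generator dual to the exceptional class, i.e.\ the Thom class $u_S \in H^2$ (compactly supported, or relative to the boundary), to $1$, or more generally lowers degree along $u_S$. Crossing with $S$ and extending by zero over $A$ gives a derivation $\theta_S$ on a model of $M$. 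Concretely, in cohomology $H^*(M) \cong H^*(X) \oplus \bigoplus_S u_S \cdot H^*(S)$, and $\theta_S$ acts as ``contraction with the exceptional direction'': $u_S\cdot\alpha \mapsto$ the pullback of $\alpha$ restricted appropriately. I would justify this through the relative Sullivan theorem applied to $(S\times D\mathcal{O}(k), S\times \partial)$, followed by naturality under gluing.

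The second step is to show that the $\theta_S$ for $S\in\mathsf N$ are linearly independent in $H_1(\mathrm{Der}(\Lambda V_M))$. I would pass to the map $H_1(\mathrm{Der})\to \mathrm{Hom}(\text{indecomposables})$, or more robustly evaluate against cohomology. A degree $-1$ derivation that is a boundary $[d,\eta]$ induces zero on the ``linear part'' modulo terms coming from products. One therefore looks for cohomology classes on which $\theta_S$ is visibly non-trivial and not decomposable. The homological-partner condition enters here. For a partner $S'$ with $\R[S]=\R[S']$, the Poincaré dual classes in $X$ are proportional. Hence a suitable combination $u_S\cdot \mathrm{PD} - c\, u_{S'}\cdot\mathrm{PD}'$ in the resolved manifold gives a class whose image under $\theta_S$ is non-zero, while it is killed by all the other $\theta_{T}$. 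We expect it to be indecomposable, or at least non-zero in $H^*$ modulo products. Without a partner, the image of $\theta_S$ would be a decomposable/cup-product class, and $\theta_S$ could be exact.

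The main obstacle is precisely this non-exactness: showing that no combination $\sum_{S\in\mathsf N}\lambda_S\theta_S$ equals $[d,\eta]$. My first attempt would use the obstruction that any boundary $[d,\eta]$ maps indecomposable generators into $\Lambda^{\geq 2}V$ plus $d$-images. It would then suffice to exhibit, for each $S$, an indecomposable generator $x$ (built from the partner difference above) with $\theta_S(x)$ having non-zero linear component. Finally, since $\pi_2(B\Diff(M)_0)\to\pi_2(B\hAut(M)_0)$ hits each $[f_{E_S}]$ by construction, the ``in particular'' follows.
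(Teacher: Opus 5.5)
Your proposal has a genuine gap, and it lies exactly at the step you flag as the ``main obstacle''. First, the local derivation is misidentified. A class in $\pi_1(\mathrm{hAut})\otimes\R$ is represented by a derivation lowering degree by one, so it cannot send the degree-two class $u_S$ to $1$. The ``contraction'' $u_S\cdot\alpha\mapsto\alpha$ also lowers degree by two, and such a map is not even in the relative complex, since $1\notin\ker\mathsf{M}(\iota)$. In $\mathsf{M}_{D\mathcal{O}(k)}=\Lambda[a_2,b_3\,|\,db_3=a_2^2]$ the twist is $\xi\cdot a_2\otimes b_3^\vee$, i.e.\ $b_3\mapsto\xi a_2$. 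After extending by zero on the pushout model $\mathsf{A}(M)$ it becomes $b_{3,S}\mapsto\xi a_{2,S}$. One checks that this derivation sends every cocycle of $\mathsf{A}(M)$ to a coboundary, so it induces zero on $H^*(M)$. Hence no test against cohomology classes can detect it; the detecting element must be a non-closed generator of $V^3$. You also give no argument that $\xi\neq 0$, i.e.\ that the twist loop is non-zero in $\pi_1(\mathrm{hAut}_\partial(D\mathcal{O}(k)))\otimes\R\cong\R$. The paper needs real input for this: it compares $\mathrm{U}(2)\subset\mathrm{PU}(3)$ acting on $\CP^2=\mathrm{Th}(\mathcal{O}(1))$ with Sasao's theorem, plus a $\Z_k$-quotient argument. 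You only say the derivation is ``expected'' to have the stated form.

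Second, your non-exactness criterion is false: boundaries can have non-zero linear part. Let $\eta$ be a degree-two derivation with $\eta(a_{2,T})=c_T$ constant, and let $x\in V^3$ have $dx=\nu a_{2,S}^2+\nu' a_{2,S'}^2$. Then $\delta\eta(x)=d\eta(x)-2\nu c_S a_{2,S}-2\nu' c_{S'}a_{2,S'}$, which can cancel $\sum_T\lambda_T\theta_T(x)=\xi(\lambda_S\nu a_{2,S}+\lambda_{S'}\nu' a_{2,S'})$. This is exactly the mechanism $\delta(1\otimes a_2^\vee)=-2a_2\otimes b_3^\vee$ that kills the twist in $\pi_1(\mathrm{hAut}(D\mathcal{O}(k)))\otimes\R$. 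It is also consistent with the class having finite order for $\CP^2$ (a single blown-up point).

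The missing idea is to first force all $c_T=0$. For $S_0\neq S_1$ the product $a_{2,S_0}a_{2,S_1}$ already vanishes in $\mathsf{A}(M)$, so $\mu(s(a_{2,S_0}a_{2,S_1}))$ is closed and every twist derivation vanishes on it. On the other hand, $\delta\eta$ evaluates there to $-c_{S_0}a_{2,S_1}-c_{S_1}a_{2,S_0}$ modulo $\mathsf{B}(M)$, and $\mathsf{B}(M)$ meets the ideal generated by the $a_{2,T}$ trivially; so the $c_T$ vanish. Only then does the homological partner do its job. The right object is not a cohomology class like $u_S\cdot\mathrm{PD}-c\,u_{S'}\cdot\mathrm{PD}'$ but the generator $s(\nu a_{2,S}^2+\nu' a_{2,S'}^2)\in s\ker\cup\subseteq V^3$. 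It exists because $a_{2,S}^2$ is, up to $1/k_S$, the pullback of the Thom form of $S$, and the Poincar\'e duals of $S$ and $S'$ are proportional. Its image under $\mu$ contains $\nu b_{3,S}+\nu' b_{3,S'}$, which forces $\lambda_S=\lambda_{S'}=0$.
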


To demonstrate the effectiveness of Theorem \ref{Main thm: Orbifold Robustness}, we provide two immediate consequences:
\begin{corx}\label{Main Cor: CP conn sum}
    For all $m,n \in \N_0$ with $m + n \geq 2$, we have 
    \begin{equation*}
        \mathrm{im}\bigl[  \pi_2(B\Diff(\CP^{2,\sharp m} \sharp \bar{\CP}^{2,\sharp n})_0) \longrightarrow \pi_2(B\hAut(\CP^{2,\sharp m} \sharp \bar{\CP}^{2,\sharp n})_0) \bigr] \otimes \R \supseteq  \R^{m+n}.
    \end{equation*}
\end{corx}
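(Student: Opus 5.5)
We will deduce the corollary from Theorem \ref{Main thm: Orbifold Robustness} by exhibiting $\CP^{2,\sharp m} \sharp \bar{\CP}^{2,\sharp n}$ as a resolution of a suitable tailor-made orbifold $(X,\mathcal{S})$ of dimension $4$. Here each component $S$ of $\mathcal{S}$ is a point, so $\mathrm{Tub}(S) = D^4/\Z_{k_S}$. In the statement we choose $k_S = 1$, i.e.\ points of $X$ that are not singular at all. Resolving such a point by $D\mathcal{O}(\pm 1)$ is exactly a blow-up (or anti-blow-up), i.e.\ a connected sum with $\bar{\CP}^2$ or $\CP^2$, depending on the sign.

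The plan is therefore as follows. Take $X = S^4$, a smooth closed manifold and hence trivially an orbifold. Let $\mathcal{S} = \{p_1,\dots,p_{m+n}\}$ consist of $m+n$ distinct points, each with tubular neighbourhood $D^4 = D^4/\Z_1$. Conditions (i) and (ii) of a tailor-made orbifold then hold: (i) by the choice of discs, and (ii) vacuously, as there are no singular points. Replacing $m$ of these discs by $D\mathcal{O}(-1)$ and $n$ of them by $D\mathcal{O}(1)$ produces $S^4 \sharp m\,\CP^2 \sharp n\,\bar{\CP}^2$, with the orientation convention adjusted so that it matches $\CP^{2,\sharp m}\sharp\bar{\CP}^{2,\sharp n}$. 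Using $S^4\sharp Y \cong Y$, this is the manifold in the statement, and it is indeed a resolution $M$ obtained from the construction preceding Theorem \ref{Main thm: Orbifold Robustness}.

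Next we determine the nice set $\mathsf{N}$. Here $n-4 = 0$, and the fundamental class of each point $p_i$ is the generator $[p_i] = [\mathrm{pt}] \in H_0(S^4;\R) \cong \R$, since $S^4$ is connected. Thus all components span the same line $\R\cdot[\mathrm{pt}]$. As soon as $m+n \geq 2$, every $p_i$ has a homological partner $p_j \neq p_i$, so $\mathsf{N} = \pi_0(\mathcal{S})$ has $m+n$ elements. Theorem \ref{Main thm: Orbifold Robustness} then gives that the $m+n$ classes $[f_{E_{p_i}}]$ are linearly independent in $\pi_2(B\hAut(M)_0)\otimes\R$. Each of these classes lies in the image of $\pi_2(B\Diff(M)_0)$, because $f_{E_S}$ factors through $B\Diff(M)_0$ by construction. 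This yields the claimed inclusion $\R^{m+n}$.

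The only step needing care is the identification of the resolution with the connected sum, including orientations. One has to check that gluing $D\mathcal{O}(\pm1)$ into the complement of a ball reproduces $\CP^2$ minus a ball up to orientation, since $D\mathcal{O}(-1)$ is the tubular neighbourhood of the exceptional curve with boundary $S^3$. The choice of signs can be made per point, which is exactly what produces arbitrary $m$ and $n$. I would also confirm that the paper's definition allows $k_S=1$ components in dimension $4$; the footnote states that $\mathcal{S}$ may contain more than singular points, which supports this.
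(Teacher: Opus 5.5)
Your proposal is correct and is essentially the paper's own proof: take $X=S^4$ with $\mathcal{S}$ a set of $m+n\geq 2$ regular points ($k_S=1$), and resolve each point by $D\mathcal{O}(\pm 1)$. Since $S^4$ is connected, all points are homologous, so $\mathsf{N}=\pi_0(\mathcal{S})$, and Theorem \ref{Main thm: Orbifold Robustness} gives the claim. Your sign assignment is the opposite of the paper's, which uses $D\mathcal{O}(1)$ for the $\CP^2$ summands and $D\mathcal{O}(-1)$ for the $\bar{\CP}^2$ summands; as you suspected, this is immaterial, because reversing orientation swaps $m$ and $n$ without changing the underlying smooth manifold, and hence does not change $\Diff(M)$ or $\hAut(M)$.
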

\begin{proof}
    Let $X = S^4$ and $\mathcal{S} = \{x_1,\dots,x_m,y_1,\dots,y_n\}$ be a set of at least two points.
    Every point here is regular and has a small tubular neighbourhood of the form $D^4$, and these neighbourhoods are pairwise disjoint.
    Resolving the points $\{x_1,\dots,x_m\}$ by $D\mathcal{O}(1)$ and the points $\{y_1,\dots,y_n\}$ by $D\mathcal{O}(-1)$ yields the manifold $\CP^{2,\sharp m} \sharp \bar{\CP}^{2,\sharp n}$.
    Since $S^4$ is connected, $\mathsf{N} = \pi_0(\mathcal{S})$, and the result follows from Theorem \ref{Main thm: Orbifold Robustness}.
\end{proof}
\begin{rem}
    The condition $m+n\geq 2$ cannot be dropped (at least for our construction).
    If we resolve only a single point $x_1 \in S^4$ with the construction from above, we obtain the bundle $E_{\{x_1\}} = \mathcal{DO}(1) \cup_{S^3\times S^2} D^4 \times S^2$.
    But it was shown in \cite{Hertl2026Moduli} that the order of the element $[f_{E_{\{x_1\}}}] \in \pi_2(B\Diff(\CP^2))$ is finite, essentially because one can write down transition functions for $E_{x_1}$ that take values in $\mathrm{PU}(3)$, so $[f_{E_{\{x_1\}}}]$ lives in the image of the homomorphism $\pi_2(B\mathrm{PU}(3)) \rightarrow \pi_2(B\Diff(\CP^2))$ and the domain $\pi_2(B\mathrm{PU}(3)) = \pi_1(\mathrm{PU}(3)) = \Z_3$ has order $3$.
\end{rem}
%
%The next result concerns the diffeomorphism group of the $\KThree$-surface.
%
\begin{corx}
    For the $\KThree$-surface, we have
    \begin{equation*}
        \mathrm{im}\bigl[\pi_2( B\Diff(\KThree)_0 ) \rightarrow \pi_2( B\hAut(\KThree)_0 ) \bigr] \otimes \R \supseteq \R^{16}.
    \end{equation*}
\end{corx}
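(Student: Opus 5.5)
The plan is to realise the $\KThree$-surface as the resolution of a tailor-made orbifold and then apply Theorem \ref{Main thm: Orbifold Robustness}, exactly as was done for Corollary \ref{Main Cor: CP conn sum}. Concretely, I take the classical Kummer construction: $X = T^4/\Z_2$, where $\Z_2$ acts on $T^4 = \R^4/\Z^4$ by $x \mapsto -x$, and $\mathcal{S}$ is the set of the $16$ fixed points $\tfrac12 \Z^4/\Z^4$. Each fixed point is an isolated singularity with a neighbourhood isometric to $D^4/\Z_2$, the $\Z_2$ acting antipodally. So condition (i) holds with $S$ a point and $k_S = 2$, and condition (ii) holds because the action is free away from these $16$ points. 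Hence $(X,\mathcal{S})$ is a tailor-made orbifold of dimension $4$.

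Next I identify the resolution. Replacing each $D^4/\Z_2$ by the disc bundle $D\mathcal{O}(-2)$, the neighbourhood of the zero section of $T^\vee S^2$ in the Eguchi--Hanson resolution, gives the Kummer surface. This is diffeomorphic to $\KThree$, a standard fact. One point needs care: the construction fixes a sign $\pm k_S$, and the orientation has to match. The boundary of $D\mathcal{O}(-2)$ is $\R P^3 = S^3/\Z_2$ with the orientation induced from $D^4/\Z_2$; this is what the resolution of an $A_1$-singularity requires. With this choice $M \cong \KThree$.

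Then I determine $\mathsf{N}$. Here $n - 4 = 0$, and each $S \in \pi_0(\mathcal{S})$ is a point. Its fundamental class is the class of a point in $H_0(X;\R) \cong \R$, which is nonzero since $X$ is connected. Hence all $16$ classes span the same line $\R \cdot [\mathrm{pt}]$. There are at least two components, so every $S$ has a homological partner and $\mathsf{N} = \pi_0(\mathcal{S})$, of cardinality $16$. Theorem \ref{Main thm: Orbifold Robustness} then gives the asserted inclusion $\R^{16}$ into the image of $\pi_2(B\Diff(\KThree)_0) \otimes \R$ in $\pi_2(B\hAut(\KThree)_0)\otimes\R$.

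The only potential obstacle is matching conventions: checking that the local model $D^4/\Z_2$ with antipodal action, resolved by the paper's $D\mathcal{O}(\pm 2)$, is the Eguchi--Hanson resolution producing $\KThree$, and not some other manifold obtained with the wrong sign. I would check this by computing the intersection form. The $16$ exceptional spheres must have self-intersection $-2$, and together with $H^2(T^4)^{\Z_2}$ they must give $b_2 = 22$ and signature $-16$. By Freedman's theorem, simple connectivity plus the even form $2(-E_8)\oplus 3H$ pins down the homeomorphism type. Diffeomorphism to $\KThree$ is the classical result that the Kummer surface is a $\KThree$-surface, which I would cite.
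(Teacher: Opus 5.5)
Your proposal is correct and follows the paper's proof: you realise K3 as the Kummer resolution of $T^4/\mathbb{Z}_2$ along its $16$ fixed points using $D\mathcal{O}(-2)$. You then get $\mathsf{N}=\pi_0(\mathcal{S})$ because all points span the same line in $H_0$ of the connected orbifold, and conclude with Theorem~\ref{Main thm: Orbifold Robustness}. The paper does not check the intersection form via Freedman; it simply cites the classical identification of the Kummer surface with K3 (Joyce, Example 7.3.2), so that part of your argument is optional.
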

\begin{proof}
    Recall that the $\KThree$-surface can be obtained from the Kummer construction, see \cite[Example 7.3.2]{Joyce2000SpecialHolonomy}, as follows: The group $\Z_2 \curvearrowright T^4 = \R^4/\Z^4$ acts diagonally by sending $x + \Z^4$ to $-x + \Z^4$.
    This action is not free, but has 16 fixed-points $\mathcal{S} = \{0,1/2\}^{4} = \mathsf{N}$.
    A small tubular neighbourhoods around each point is of the form $D^4/\Z_2$, and we obtain the Kummer surface by resolving these singularities with $D\mathcal{O}(-2)$.
    Theorem \ref{Main thm: Orbifold Robustness} now implies the claim.
\end{proof}
Recently, Baraglia proved in \cite{Baraglia2023K3families} that $\pi_2(B\Diff(\KThree))$ contains an infinitely generated free abelian subgroup.
We believe that our elements form a subgroup of Baraglia's subgroup, which implies that some of these elements are actually in different fibre-homotopy equivalence classes.
In addition, Baraglia's result relies heavily on family Seiberg-Witten invariants, which are not available to higher dimensions at the time of writing.

The reason we study the composition $S^2 \rightarrow \hGTwoModuli{M} \rightarrow B\Diff(M)_0 \rightarrow B\hAut(M)$ apart from the intrinsic interest of distinguishing the $\GTwo$-families as Serre-fibrations, is that the monoid of homotopy automorphisms $\hAut(M)$ can be effectively studied with the help of rational (or real) homotopy theory, as it allows to compute the rational (or real) homotopy groups $\pi_k(\hAut(X))\otimes\bbK$ of a finite nilpotent CW complex $X$ in terms of derivations on its minimal model.
This idea was originally pioneered by Sullivan \cite{Sullivan1977Infinitesimal} and subsequently refined by others; see \cite{Lupton2007RankFundMappingSpaces} \cite{Buijs2008RationalLieAlgebraFunction}, or \cite{berglund2020rational} for a non-exhaustive list.

The next result generalises Sullivan's observation to the relative situation, namely to the topological monoid $\hAut_A(X)$ of all homotopy equivalences on $X$ that restrict to the identity on $A$, which we believe to be of independent interest.
Recall that a continuous map $f \colon X \rightarrow X$ is a homotopy-equivalence relative to $A$ if $f|_A = \id_A$ and if there is a continuous map $g \colon X \rightarrow X$ with $g|_A = \id_A$ such that $g\circ f$ and $f \circ g$ are homotopic to the identity of $X$ through homotopies $H$ with $H_t|_A = \id_A$ for all $t \in [0,1]$.

For a commutative differential graded algebra (cdga) $B_1$ over a field $\bbK$ of characteristic zero and a differential graded ideal $I \subseteq B_2$, let $\mathrm{Der}_n(B_1,I)$ denote the vector space of all differential $B_1 \rightarrow I$ that lower the degree by $n$.
It is easy to see that $\delta \colon \mathrm{Der}_{n}(B_1,I) \rightarrow \mathrm{Der}_{n-1}(B,I)$ given by $\delta(\theta) = d\theta - (-1)^n\theta d$ forms a (homologically graded) differential on $\mathrm{Der}(B_1,I) = \bigoplus \Der_n(B_1,I)$.
\begin{thmx}\label{Main Thm: Homotopy Automorphism via Derivations}
    Let $\iota \colon A \hookrightarrow X$ be a pair of finite, nilpotent CW complexes, $\Lambda V_X$ a Sullivan model for $X$, and $\mathsf{A}(\iota) \colon \mathsf{A}(X) \twoheadrightarrow \mathsf{A}(A)$ a surjective homomorphism of cdgas modelling the inclusion $\iota$. 
    Then, for all $k \geq 1$, we have
    \begin{equation*}
        \pi_k(\hAut_A(X),\id)\otimes \bbK \cong H_k(\Der(\Lambda V_X,\ker \mathsf{A}(\iota)),\delta).
    \end{equation*}
\end{thmx}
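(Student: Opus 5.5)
We identify $\hAut_A(X)$, up to the relevant path component, with a fibre of a restriction fibration between mapping spaces, compute its real homotopy groups with a relative version of the Haefliger--Brown--Szczarba model, and then specialise to the component of the identity. The relative model rests on the sequence
\begin{equation*}
    \mathrm{Map}_A(X,X) \longrightarrow \mathrm{Map}(X,X) \xrightarrow{\ \iota^*\ } \mathrm{Map}(A,X),
\end{equation*}
which is a Serre fibration because $\iota$ is a cofibration. Here $\mathrm{Map}_A(X,X)$ is the fibre over $\iota$, i.e.\ the space of maps restricting to the identity on $A$. A map $f$ with $f|_A = \id_A$ is a relative homotopy equivalence exactly when it is invertible in $\pi_0$ of the monoid $\mathrm{Map}_A(X,X)$. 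Hence $\hAut_A(X)$ is a union of path components of $\mathrm{Map}_A(X,X)$, and $\pi_k(\hAut_A(X),\id) = \pi_k(\mathrm{Map}_A(X,X),\id)$ for $k\geq 1$. It therefore suffices to compute $\pi_k(\mathrm{Map}_A(X,X),\id)\otimes\bbK$ for $k\ge1$.

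For the absolute mapping spaces we invoke the known results cited before the statement (Lupton--Smith, Buijs--Murillo). For $k\geq 2$ they give
\begin{equation*}
    \pi_k(\mathrm{Map}(Y,X),f)\otimes\bbK \cong H_k(\Der(\Lambda V_X, B_Y; f),\delta),
\end{equation*}
where $B_Y$ is a finite-type cdga model of $Y$ and the derivations are taken along a model of $f$. For $k=1$ the same formula holds after using nilpotency to pass to Malcev completions. We apply this with $B=\mathsf{A}(X)$ for $Y=X$ and $B=\mathsf{A}(A)$ for $Y=A$, with the identity modelled by a quasi-isomorphism $\Lambda V_X \to \mathsf{A}(X)$ and $\iota$ modelled by composing it with $\mathsf{A}(\iota)$.

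Surjectivity of $\mathsf{A}(\iota)$ gives a short exact sequence of dg ideals and algebras
\begin{equation*}
    0\to \ker\mathsf{A}(\iota)\to \mathsf{A}(X)\to\mathsf{A}(A)\to 0 .
\end{equation*}
Since $\Lambda V_X$ is free, derivations are determined by their values on $V_X$, and each term equals $\mathrm{Hom}(V_X,-)$ of the corresponding algebra. This yields a short exact sequence of chain complexes
\begin{equation*}
    0\to \Der(\Lambda V_X,\ker\mathsf{A}(\iota))\to \Der(\Lambda V_X,\mathsf{A}(X))\to \Der(\Lambda V_X,\mathsf{A}(A))\to 0,
\end{equation*}
and hence a long exact sequence in homology.

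The key step, and the main obstacle, is to show that this long exact sequence agrees with the long exact homotopy sequence of the restriction fibration, tensored with $\bbK$, compatibly with the boundary maps. Two points need care.
\begin{itemize}
    \item The identification must be natural in the source: restriction $\iota^*$ must correspond to postcomposition with $\mathsf{A}(\iota)$. I would use the Brown--Szczarba/Haefliger model $\Lambda(V_X\otimes B^\vee)$ of $\mathrm{Map}(Y,X)$, which is functorial in surjections $B_X\to B_A$ of finite-dimensional models. After possibly replacing $\mathsf{A}(\iota)$ by a finite-dimensional surjective model, which leaves $\ker$ unchanged up to quasi-isomorphism by a five-lemma argument, the fibre inclusion is then modelled by the cofibre sequence of these Sullivan algebras. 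The fibre model has generators $V_X\otimes(\ker)^\vee$, and its linear part in positive degrees is exactly the complex dual to $\Der(\Lambda V_X,\ker\mathsf{A}(\iota))$.
    \item One has to check independence of the choices of models, and handle the low degrees $k=1$ (and $k=2$ near $\pi_1$ of the base). There the fibration sequence need not give exact sequences of vector spaces. I would instead argue directly with the minimal model of the component of the fibre, using nilpotency of $\mathrm{Map}_A(X,X)_{\id}$, which follows from $X$ being finite and nilpotent.
\end{itemize}

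Once this compatibility is established, the five lemma applied to the map from the algebraic long exact sequence to the topological one gives
\begin{equation*}
    \pi_k(\hAut_A(X),\id)\otimes\bbK \cong H_k(\Der(\Lambda V_X,\ker\mathsf{A}(\iota)),\delta) \qquad\text{for all } k\ge1,
\end{equation*}
which is the statement.
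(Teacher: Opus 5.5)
\textbf{There is a genuine gap, at exactly the step you flag as ``the main obstacle''.} Your reduction to $\pi_k(\mathrm{Map}_A(X,X),\mathrm{id})$ and your short exact sequence of derivation complexes are fine. The problem is that the five lemma needs a commutative ladder, and in particular a vertical map on the fibre term,
\[
\pi_k(\mathrm{Map}_A(X,X),\mathrm{id})\otimes\mathbb{K}\longrightarrow H_k(\mathrm{Der}(\Lambda V_X,\ker\mathsf{A}(\iota)),\delta),
\]
compatible with $\iota^\ast$ and with both connecting homomorphisms.

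The absolute results of Lupton--Smith and Buijs--Murillo only supply the vertical maps for the total space and the base. They do not give their naturality along $\iota^\ast$. They also give no map into derivations with values in $\ker\mathsf{A}(\iota)$. Matching the topological boundary $\pi_{k+1}(\mathrm{Map}(A,X),\iota)\to\pi_k(\mathrm{Map}_A(X,X),\mathrm{id})$ with the algebraic connecting map would need models of the adjoint maps $S^k\times X\to X$ that restrict \emph{strictly} to the given ones on $A$. Constructing this comparison map is the whole content of the theorem.

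Your Brown--Szczarba sketch does not supply it, because it presupposes three things you do not prove:
\begin{enumerate}
\item a finite-dimensional surjective model of the pair;
\item that the induced map of Brown--Szczarba algebras (which are not Sullivan algebras and have generators in negative degrees) models $\iota^\ast$ as a relative Sullivan algebra;
\item that its quotient, truncated at the correct components, models $\mathrm{Map}_A(X,X)_{\mathrm{id}}$.
\end{enumerate}
If you had established the last claim, you could read the homotopy groups off the linear part directly, and the five lemma would be superfluous. At $k=1$ you explicitly fall back on an unperformed ``direct'' argument. Even the ladder at $k=1$ would need your vertical maps to be homomorphisms, which requires a separate additivity argument.

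\textbf{How the paper closes this.} It never uses the fibration long exact sequence. Instead it works with Bousfield--Gugenheim's simplicial algebraic mapping spaces.
\begin{enumerate}
\item By their fundamental theorem, $\mathsf{A}(\iota)\circ(\cdot)\colon\mathrm{map}(\Lambda V_X,\mathsf{A}(X))\to\mathrm{map}(\Lambda V_X,\mathsf{A}(A))$ is a Kan fibration.
\item Its fibre $\mathrm{map}^{\varphi}(\Lambda V_X,\mathsf{A}(X))$ is invariant under quasi-isomorphic replacement of the surjection.
\item Via the adjunction map $\mu$ and the rationalisation $\eta$ (using Hilton--Mislin--Roitberg for mapping spaces), this fibre is identified with the rationalised relative mapping space.
\item Its $\pi_n$ is then computed by hand. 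A class is a dga map $\Lambda V_X\to H(S^n)\otimes\mathsf{A}(X)$ of the form $1\otimes\Phi+\mathrm{vol}_{S^n}\otimes\theta$. The fibre condition forces $\mathsf{A}(\iota)\circ\theta=0$, so $\theta$ is a $\delta$-cycle in $\mathrm{Der}(\Lambda V_X,\ker\mathsf{A}(\iota))$. Homotopies over the base correspond exactly to $\delta$-boundaries.
\item The group structure at $k=1$ is checked separately via composition in the monoid.
\end{enumerate}
This direct identification of the relative fibre with derivations into the kernel is the idea your proposal is missing. With it, you need no long exact sequence, no finite-dimensional models and no absolute results.
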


\vspace{10pt}
\noindent
\textbf{Outline of the article:}
In Section \ref{Section: Preliminaries}, we recall the essential facts and notational conventions of rational and real homotopy theory, and compute the algebraic models of the examples  we need later in the article. 
Section \ref{Section: App hAut via RHT} is devoted to proving Theorem \ref{Main Thm: Homotopy Automorphism via Derivations} and its generalisation, Theorem \ref{thm: hAut via derivations (general)}.
This result will be central for giving a convenient description of the classifying maps of the fibre bundles $D\mathcal{O}(k) \rightarrow \mathcal{DO}(k) \rightarrow S^2$ in terms of derivations on the minimal models of the fibres in Subsection \ref{Subsection: Top Properties Blow Up Families}, which will be used in Subsection \ref{subsection: resolving orbifolds} to prove Theorem \ref{Main thm: Orbifold Robustness}.
However, the article is written so that readers may assume Theorem \ref{Main Thm: Homotopy Automorphism via Derivations} as a black-box and immediately jump to the topological application in Section \ref{Section: RHT Orbifold Resolutions} and Section \ref{Section: G2 Applications} if they wish.
In Section \ref{Section: G2 Applications}, we prove Theorem \ref{Main Thm: G2 Manifold fibration detection} apply this theorem to the examples in \cite{Joyce1996CompactG2II}. 

\vspace{10pt}
\noindent
\textbf{Acknowledgments:} 
The author acknowledges support from the Australian Research Council Discovery Project DP220102163.
Furthermore, he would like to thank Diarmuid Crowley for continuing support and interest in this work.

\section{Preliminaries on Rational Homotopy Theory}\label{Section: Preliminaries}

%
%
%
%%%%%%%%%%%%%%%%%%%%%%%%%%%%%%%%%%%%%%%%%%%%%%%%%%%%%%%%%%%%%%%%%%%%%%%%%%%%
%             Subsection: Basics on rational homotopy theory                %
%%%%%%%%%%%%%%%%%%%%%%%%%%%%%%%%%%%%%%%%%%%%%%%%%%%%%%%%%%%%%%%%%%%%%%%%%%%%
%
%
%
We are going to recall some basic facts of rational homotopy theory that we need in this article.
A non-complete list of excellent sources consists of \cite{berglund2012RHT}, \cite{Felix2001RHT}, \cite{Bousfield1976PLdeRham}, and the original \cite{Sullivan1977Infinitesimal}.
Although we are going to apply the theory mostly to orbifolds and manifolds, we present it in full generality.
It will also be more convenient to work with simplicial sets instead of topological spaces.
This is unproblematic because the geometric realisation functor and the singular set functor form a Quillen equivalence $ |\placeholder| \colon \mathsf{Top} \rightleftarrows \mathsf{sSet} \colon S(\placeholder)$.
In particular, the two categories have isomorphic homotopy categories.
We first give a quick recollection of rational simplicial sets (or rational topological spaces) following \cite{Hilton1975Localisation} and then present the algebraic counterpart mostly following \cite{Bousfield1976PLdeRham}.

A connected Kan set is called \emph{nilpotent} if its fundamental group $\pi_1(X)$ is nilpotent and if the action of $\pi_1(X)$ on all higher homotopy groups is nilpotent.
A nilpotent Kan set is called \emph{rational} if, for all $n\geq 1$, the power maps $(\placeholder)^n \colon \pi_k(X) \rightarrow \pi_k(X)$ are isomorphisms for $k\geq 2$ and bijections for $k=1$.
A simplicial map $\ell_\Q \colon X \rightarrow X_\Q$ between Kan sets is called a \emph{rationalisation} if it satisfies the following universal property: For each rational Kan set and every simplicial map $g \colon X \rightarrow Y$ there exists a simplicial map $g_\Q$, unique up to homotopy, such that $g_\Q \circ \ell_\Q \simeq g$.
In particular, each simplicial map $f \colon X \rightarrow Y$ induces a unique (up to homotopy) simplicial map $f_\Q \colon X_\Q \rightarrow Y_\Q$ and the rationalisation of a Kan set is unique up to homotopy equivalence.
We call two Kan sets $X$ and $Y$ \emph{rational homotopy equivalent} if their rationalisations $X_\Q$ and $Y_\Q$ are (weakly) homotopy equivalent.
By Theorem 3B in Chapter II of \cite{Hilton1975Localisation}, the rationalisation induces an isomorphism $\pi_k(\ell_\Q) \colon \pi_k(X)\otimes\Q  \rightarrow \pi_k(X_\Q)$, where we use the Malcev completion for the fundamental group, which agrees with the usual tensor product with $\Q$ if $\pi_1(X)$ is abelian, see Chapter I of \cite{Hilton1975Localisation} for details.

On the algebraic side, recall that a \emph{commutative differential graded algebra} $(A,d)$ over a base field $\bbK$ of characteristic $0$ is a (commutative) group object in the category of chain complexes over $\bbK$.
It should be thought of as an abstraction of the de Rham complex of a smooth manifold.
A homomorphism of (commutative) differential graded algebras (\emph{dga-homomorphism} for short) is a chain map that respects the multiplicative structure and the unit.
Together, they form a category $\mathsf{CDGA}$.
We would like to emphasise that we do \emph{not} exclude the possibility that $1 = 0$; this happens if and only if $A = 0$ is the zero-algebra, which is the terminal object in the category $\mathrm{CDGA}$.

To each simplicial set $X$, we can assign the cdga $\Omega_{PL}^\bbK(X)$ of polynomial differential forms over $\bbK$, see \cite[Definition 2.1]{Bousfield1976PLdeRham} for details.
For example, for the (combinatorial) $n$-simplex, we have
\begin{equation*}
    \Omega_{PL}^\bbK(\Delta[n]) = \frac{\bbK[T_0,\dots,T_n] \otimes \Lambda[dT_0,\dots,dT_n]}{\langle T_0 + \dots + T_n = 1, dT_0 + \dots + dT_n =0 \rangle} 
\end{equation*}
with the differential defined as on the smooth differential forms.
Usually, we drop $\bbK$ and the subscript $PL$ from the notation if it does lead to confusion.
The algebra over polynomial differential forms gives rise to a contravariant functor that exhibit an adjunction
\begin{equation}\label{eq: spatial de Rham adjunction}
    \langle \placeholder \rangle \colon \mathsf{CDGA} \rightleftarrows \mathsf{sSet}^\op : \Omega(\placeholder),
\end{equation}
where the left adjoint functor $\langle \placeholder \rangle$ is referred to as the \emph{spatial realisation}. 
It is defined by $\langle B \rangle_n = \mathrm{Hom}_{dga}(B,\Omega(\Delta[n]))$.

The cdga $\Omega(X)$ satisfies a form of the de Rham theorem in the sense that $H(\Omega(X),d) \cong H_{\mathrm{simp}}(X;\bbK) \cong H_{sing}(|X|;\bbK)$, see \cite[Theorem 2.2]{Bousfield1976PLdeRham}.
To extract (topological) information about $X$ beyond its cohomology-ring, we need to replace the rather unwieldy algebra $\Omega(X)$ by more manageable cdgas.
To this end, recall that a \emph{quasi-isomorphism} between two cdgas $B_1$ and $B_2$ is a dga-homomorphism $\varphi \colon B_1 \rightarrow B_2$ that induces an isomorphism between their cohomology groups.
\begin{definition}
    An (algebraic) \emph{model} (or $\bbK$-\emph{model} if we wish to emphasise the underlying field) for a cdga $B$ is a pair $(C,m)$ consisting of another cdga $C$ together with a quasi-isomorphism $m \colon C \rightarrow B$.
    If $B_j$ is modelled by $(C_j,m_j)$ for $j=1,2$, we call $\psi$ a \emph{model} for the dga-homomorphism $\varphi\colon B_1 \rightarrow B_2$ if the following diagram commutes up to homotopy:
    \begin{equation*}
        \xymatrix@R-.4em{ C_1 \ar[rr]^\psi\ar[d]_{m_1} && C_2 \ar[d]^{m_2} \\
        B_1 \ar[rr]_\varphi && B_2.}
    \end{equation*}
    We call the models \emph{strict}, if the diagram commutes on the nose.
    A  model for $X$ is a model for $\Omega(X)$ and a model for a continuous map $f \colon X_1 \rightarrow X_2$ is a $\bbK$-model for $\Omega(f)$.
    If $\bbK$ is either $\Q$ or $\R$, we refer to them as rational or real models.
\end{definition}
We need the definition in this generality only in very few places. 
Mostly, the underlying cdga of our models will belong to the following subclass, which should be thought of as an algebraic analogue to the subclass of CW-complexes inside the category of topological spaces.
The following definition is borrowed from \cite{Felix2001RHT}.
\begin{definition}\label{def: Sullivan model}
    A cdga $(B,d)$ is called a \emph{Sullivan algebra} if there is a graded $\bbK$-vector space $V$ concentrated in non-negative degrees that has an ascending filtration $V(0) \subseteq V(1) \subseteq \dots$ such that the following three conditions are satisfied:
    \begin{align*}
        (B,d) &= (\Lambda V,d),  \qquad \bigcup_{p=0}^\infty V(p) = V, \qquad \text{and} \qquad dV(p) \subseteq \Lambda V(p-1), \ dV(0) = \{0\}.  
    \end{align*}
    A Sullivan algebra $(\Lambda V,d)$ is called \emph{minimal} if $d(V) \subseteq \Lambda^{\geq 2} V$, i,e., if for all $v \in V$ the element $d(v)$ is a linear combination of decomposable elements.

    A model $(C,m)$ for a cdga algebra $B$ is called a \emph{Sullivan model} or a \emph{minimal model} if the underlying cdga $C$ is a Sullivan algebra or a minimal algebra, respectively.
\end{definition}
We wish to emphasise, as in the case of CW-complexes, that the filtration is not part of the structure.
\begin{example}\label{emxpl: postnikov decomposition of Sullivan algebras}
    Let $(\Lambda V,d)$ be a minimal Sullivan algebra.
    Then the filtration $V(q) = V^{\leq q} := \bigoplus_{s\leq q} V^s$ is a filtration of $V$.
    It is easy to see that this filtration turns the differential graded subalgebra $(\Lambda V^{\leq p},d)$ of $(\Lambda V,d)$ into a (minimal) Sullivan algebra as well.
    In resemblance to the Postnikov decomposition of a nilpotent topological space, we denote the \emph{$p$-truncation} $(\Lambda V^{\leq p},d)$ by $\mathsf{P}_p\Lambda V$.
\end{example}
An important example of a Sullivan algebra that is not minimal is the `algebraic interval' $I = \Lambda[t,dt]$, which is generated by the graded vector space $V = V^0 \oplus V^1 = \R\cdot t \oplus \R\cdot dt$ and whose differential satisfies the tautological relation $d(t) = dt$.
It comes with two dga-homomorphism $\ev_0, \ev_1 \colon \Lambda[t,dt] \rightarrow \R$ that satisfy $\mathrm{ev}_j(t)=j$. 
Note, in particular, that there is an isomorphism $\Lambda[t,dt] \cong \Omega(\Delta[1])$ induced by $T_0 \mapsto t$ and $T_1 \mapsto 1-t$.
The algebraic interval allows us to define notion of a homotopy for dga-homomorphisms.
\begin{definition}\label{def: Algebra Homotopy}
    Two dga-homomorphisms $\varphi_0,\varphi_1 \colon B_1 \rightarrow B_2$ are \emph{homotopic} if there is a dga-homomorphism $H \colon B_1 \rightarrow \Lambda[t,dt] \otimes B_2$ such that $\mathrm{ev}_j \circ H = \varphi_j$.
\end{definition}
The importance of Sullivan algebras $\Lambda V$ is that being homotopic induces an equivalence relation on $\mathrm{Hom}_{dga}(\Lambda V,B)$, \cite[Proposition 12.7]{Felix2001RHT}.
Moreover, composition with a quasi-isomorphism $\varphi \colon B_1 \rightarrow B_2$ induces a bijection on the set of homotopy classes $[\Lambda V,B_1] \xrightarrow{\cong} [\Lambda V,B_2]$, see \cite[Proposition 12.9]{Felix2001RHT}.

By \cite[Theorem 14.12]{Felix2001RHT}, every cdga $B$ with $H^0(B) = \bbK$ has a minimal model and, for two minimal models $m_j \colon C_j \rightarrow B$, there is an isomorphism $\Phi \colon C_1 \rightarrow C_2$ such that $m_1$ and $m_2 \circ \Phi$ are homotopic.
For this reason, we will denote the minimal model of a simplicial set $X$ (or topological space) with $\mathsf{M}_X$.
Moreover, each dga-homomorphism $\varphi \colon B_1 \rightarrow B_2$ gives rise to a homomorphism $\mathsf{M}(\varphi) \colon \mathsf{M}_1 \rightarrow \mathsf{M}_2$ that is unique up to homotopy.

It was proved by Bousfield and Gugenheim that the category $\mathsf{CDGA}$ carries a closed model structure with quasi-isomorphisms as weak-equivalences and surjective dga-homomorphism as fibrations, see \cite[Theorem 4.3]{Bousfield1976PLdeRham}.
In this model structure, Sullivan algebras are cofibrant objects, which can be derived from \cite[4.5 Closure Properties]{Bousfield1976PLdeRham}, but see \cite[Theorem 8.11]{berglund2012RHT} for an explicit proof.

The adjunction (\ref{eq: spatial de Rham adjunction}) gives rise to a Quillen adjunction, but it fails to induce an equivalence between their homotopy categories (even if one only considers rational simplicial sets). %, because the unit $C \rightarrow \Omega \langle C \rangle$ fails to be a quasi-isomorphism in general.
However, if $\bbK = \Q$ and if $\mathrm{fin}_{\Q,\mathrm{nil}}\text{-}\mathrm{Ho}(\mathsf{sSet})$ denotes the full subcategory of the homotopy category of $\mathsf{sSet}$ whose objects are connected, nilpotent, Kan complexes of finite $\Q$-type (i,e., $H_i(X;\Q)$ is finite dimensional for all $i\geq0$) and if $\mathrm{fin}_\Q\text{-}\mathrm{Ho}(\mathsf{CDGA})$ denotes the full subcategory of $\mathrm{Ho}(\mathsf{CDGA})$ whose objects are connected cdgas (i,e., $H^0(C,d) = \Q$) of finite $\Q$-type, then Bousfield and Gugenheim proved that the spatial realisation and the minimal model induce an equivalence of categories
\begin{equation*}
    \langle \placeholder \rangle \colon \mathrm{fin}_\Q\text{-}\mathrm{Ho}(\mathsf{CDGA}) \overset{\cong}{\rightleftarrows} \mathrm{fin}_{\Q,\mathrm{nil}}\text{-}\mathrm{Ho}(\mathsf{sSet})^\op : \mathsf{M}, 
\end{equation*}
see \cite[Theorem 9.4]{Bousfield1976PLdeRham}.

Unpacking the notion of homotopy categories leads to the following explicit consequences.
\begin{prop}
    If $X_j$ with $j=0,1$ are nilpotent Kan sets (or CW-complexes) whose homology groups $H_\ast(X_j;\Q)$ are finite dimensional in every degree, then the following statements hold true:
    \begin{itemize}
        \item[(1)] $X_0$ and $X_1$ are rational homotopy equivalent if and only if their minimal models are isomorphic.
        \item[(2)] If, in addition, $X_0$ and $X_1$ are rational, then the set of homotopy classes agrees with the set of homotopy classes between their Sullivan models:
        \begin{equation*}
            [X,Y] \cong [\Lambda V_Y,\Lambda V_X]
        \end{equation*}
        \item[(3)] If $\mathsf{M}_X$ is the minimal model, then the map $\ell \colon X \mapsto \langle \mathsf{M}_X\rangle$, which should be thought of as the unit of the adjunction (\ref{eq: spatial de Rham adjunction}), is a rationalisation.
    \end{itemize}
\end{prop}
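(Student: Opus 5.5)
The three statements follow by unpacking the Bousfield--Gugenheim equivalence of categories
\begin{equation*}
\langle \placeholder \rangle \colon \mathrm{fin}_\Q\text{-}\mathrm{Ho}(\mathsf{CDGA}) \rightleftarrows \mathrm{fin}_{\Q,\mathrm{nil}}\text{-}\mathrm{Ho}(\mathsf{sSet})^\op : \mathsf{M}.
\end{equation*}
The plan is to prove (3) first, since (1) and (2) reduce to it. The steps are as follows.

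\textbf{Statement (3).} For a nilpotent Kan set $X$ of finite $\Q$-type, let $m \colon \mathsf{M}_X \rightarrow \Omega(X)$ be the minimal model. Its adjoint under (\ref{eq: spatial de Rham adjunction}) is a simplicial map $\ell \colon X \rightarrow \langle \mathsf{M}_X \rangle$.
\begin{itemize}
\item By Bousfield--Gugenheim, $\langle \mathsf{M}_X\rangle$ is a rational nilpotent Kan set. Its homotopy groups are the $\Q$-duals of the indecomposables $V^k$ of $\mathsf{M}_X$, which makes it rational; for $\pi_1$ one uses the Malcev-complete nilpotent group built from $V^1$.
\item The map $\ell$ induces an isomorphism on rational cohomology. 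Indeed, $\Omega(\ell)$ composed with the unit $\mathsf{M}_X \rightarrow \Omega\langle \mathsf{M}_X\rangle$ equals $m$. Bousfield--Gugenheim show that this unit is a quasi-isomorphism for minimal algebras of finite type, and $m$ is a quasi-isomorphism, so $\Omega(\ell)$ is one as well.
\item A rational-homology isomorphism from a nilpotent space to a rational nilpotent space is a rationalisation. This is the characterisation of localisation in Hilton--Mislin--Roitberg \cite{Hilton1975Localisation}: for nilpotent spaces, $\ell$ is a $\Q$-localisation iff the target is $\Q$-local and $H_\ast(\ell;\Q)$ is an isomorphism.
\end{itemize}
This last point is the step needing care. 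The rest is formal, but here one must check that the nilpotency hypotheses of \cite{Hilton1975Localisation} are met, including the Malcev completion of $\pi_1$.

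\textbf{Statement (1).} Suppose $\mathsf{M}_{X_0} \cong \mathsf{M}_{X_1}$. Applying $\langle\placeholder\rangle$ and using (3) gives
\begin{equation*}
X_{0,\Q} \simeq \langle \mathsf{M}_{X_0}\rangle \cong \langle \mathsf{M}_{X_1}\rangle \simeq X_{1,\Q}.
\end{equation*}
Conversely, suppose $X_{0,\Q} \simeq X_{1,\Q}$. Rationalisation induces an isomorphism on rational cohomology, so $\Omega(X_j)$ and $\Omega(X_{j,\Q})$ are quasi-isomorphic via the natural zigzag. Quasi-isomorphic connected cdgas have isomorphic minimal models by the uniqueness statement \cite[Theorem 14.12]{Felix2001RHT}. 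Hence $\mathsf{M}_{X_0} \cong \mathsf{M}_{X_1}$.

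\textbf{Statement (2).} Since the functors form an equivalence of categories, morphism sets correspond. For rational $X,Y$ the equivalence gives
\begin{equation*}
[X,Y] = \mathrm{Hom}_{\mathrm{Ho}}(X,Y) \cong \mathrm{Hom}_{\mathrm{Ho}(\mathsf{CDGA})}(\mathsf{M}_Y,\mathsf{M}_X).
\end{equation*}
A Sullivan model $\Lambda V_Y$ is cofibrant, and homotopy is an equivalence relation on maps out of it \cite[Proposition 12.7]{Felix2001RHT}. So morphisms in the homotopy category out of $\Lambda V_Y$ are homotopy classes of dga-maps. Composing with a quasi-isomorphism induces a bijection on these classes \cite[Proposition 12.9]{Felix2001RHT}. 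We may therefore replace the target $\Omega(X)$, or $\mathsf{M}_X$, by any Sullivan model $\Lambda V_X$, which yields
\begin{equation*}
[X,Y] \cong [\Lambda V_Y,\Lambda V_X].
\end{equation*}
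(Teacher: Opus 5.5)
Your proposal is correct and takes essentially the paper's route: the paper gives no separate argument and simply reads (1)--(3) off the Bousfield--Gugenheim equivalence \cite[Theorem 9.4]{Bousfield1976PLdeRham}, later citing \cite[Section 11.2]{Bousfield1976PLdeRham} for the fact that the homotopy unit is a rationalisation. The one difference is that you verify (3) yourself, using the Hilton--Mislin--Roitberg homology criterion \cite{Hilton1975Localisation} rather than the rational-homotopy-group statement from Bousfield--Gugenheim; for nilpotent spaces the two criteria are equivalent, so this is a minor variation.
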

The minimal model of a nilpotent Kan set (over each field $\bbK$ of characteristic zero) is closely connected to its homotopy group.
\begin{prop}[Theorem 11.3 in \cite{Bousfield1976PLdeRham}]
    If $X$ is a nilpotent Kan complex of finite $\Q$-type and $\mathsf{M}_X = \Lambda V_X$ is minimal model, then there is a natural bijection $\pi_k(X) \otimes \bbK \cong \mathrm{Hom}(V^k_X,\bbK)$, which is a homomorphism whenever $\pi_k(X)$ is abelian.
\end{prop}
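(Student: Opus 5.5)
We reduce to the case $X = \langle \mathsf{M}_X \rangle$ rational, and compute homotopy groups of the spatial realisation via the adjunction (\ref{eq: spatial de Rham adjunction}) and the equivalence of homotopy categories of Bousfield and Gugenheim. By item (3) of the preceding proposition, $\ell\colon X\to\langle\mathsf{M}_X\rangle$ is a rationalisation, and $\pi_k(\langle \mathsf{M}_X\rangle)$ agrees with $\pi_k(X)\otimes\bbK$. It therefore suffices to construct a natural bijection
\begin{equation*}
    \pi_k(X)\otimes\bbK \cong \pi_k(\langle \mathsf{M}_X\rangle)\cong \mathrm{Hom}(V^k_X,\bbK),
\end{equation*}
and to check that it is additive when $\pi_k$ is abelian.

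\emph{Step 1 (homotopy as algebra).} We write $\pi_k(\langle\Lambda V\rangle)$ as based homotopy classes $[S^k,\langle \Lambda V\rangle]_\ast$. By adjunction these are homotopy classes of augmented dga maps $\Lambda V\to \Omega(S^k)$. Since $\Lambda V$ is cofibrant (a Sullivan algebra), \cite[Proposition 12.9]{Felix2001RHT} lets us replace $\Omega(S^k)$ by a quasi-isomorphic model. We take the formal model $H^\ast(S^k)=\bbK\oplus\bbK\,e_k$ with $e_k^2=0$ and zero differential, and we must use a zig-zag through a Sullivan model of $S^k$ to realise this replacement.

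\emph{Step 2 (counting maps).} A dga map $\varphi\colon\Lambda V\to \bbK\oplus\bbK e_k$ that is augmented ($V^0=0$ by connectivity, or nilpotence handled via the augmentation) is determined by a linear map $V^k\to\bbK$, provided it kills $V^{\neq k}$. Compatibility with $d$ is automatic, because minimality gives $dV\subseteq\Lambda^{\ge2}V$, which maps to $0$ since $e_k^2=0$ and the augmentation ideal squares to zero. Conversely, any linear map $V^k\to\bbK$, extended by zero, is such a map. Two such maps are homotopic iff they are equal: a homotopy $\Lambda V\to\Lambda[t,dt]\otimes(\bbK\oplus\bbK e_k)$ sends $v\in V^k$ to $f(t)e_k+g(t)dt\cdot(\text{degree }k-1\text{ part})$. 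The only degree-$(k-1)$ elements available come from $V^{k-1}$, whose images are decomposables. Hence $f$ is constant by $d$-compatibility, using minimality again.

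\emph{Step 3 (group structure).} For $k\ge2$, the pinch map $S^k\to S^k\vee S^k$ is modelled by the diagonal, i.e. by the map $\bbK\oplus\bbK e\to\bbK\oplus\bbK e_1\oplus\bbK e_2$ sending $e\mapsto e_1+e_2$. On linear maps this gives addition, again because decomposables vanish. The case $k=1$ in the abelian setting is handled the same way.

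We expect the main obstacle to be Step 1 in the nilpotent, non-simply-connected case. There the formal model, based maps, and free homotopy classes must be compared carefully, and the $\pi_1$ identification requires the Malcev completion and the lower central series filtration $V(p)$. For this we would fall back on the Postnikov-type induction over the minimal Sullivan algebra (Example \ref{emxpl: postnikov decomposition of Sullivan algebras}), as in \cite{Bousfield1976PLdeRham}. That induction uses the fibration $\langle\Lambda V^{\le k}\rangle\to\langle\Lambda V^{<k}\rangle$ with fibre $K(\mathrm{Hom}(V^k,\bbK),k)$.
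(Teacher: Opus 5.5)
The paper does not prove this statement itself; it quotes \cite[Theorem 11.3]{Bousfield1976PLdeRham}. So your direct route via maps into the formal model $H(S^k)$ has to stand on its own. As written, it does prove the simply connected case. If $V^1=0$, every homotopy $\Lambda V\to\Lambda[t,dt]\otimes H(S^k)$ is automatically augmented, because $\Lambda[t,dt]$ vanishes in degrees $\geq 2$. Hence free and based classes coincide, and \cite[Proposition 12.9]{Felix2001RHT} is the right tool. Moreover, for $v\in V^k$ every factor of the decomposable $dv$ has degree between $2$ and $k-1$ and is sent to $0$, which forces $f_v'=0$.

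In the nilpotent, non-simply-connected case, however, Step 2 is false, not just delicate. You start with augmented maps in Step 1, but then use \cite[Proposition 12.9]{Felix2001RHT} and unrestricted homotopies. What moves $f_v$ is not a $dt$-component of $H(v)$ (there is none for $k\geq 2$). It is the $dt$-component of $H$ on $V^1$, which enters $H(dv)$ through summands $w\cdot u$ with $w\in V^1$, $u\in V^k$, i.e.\ through the linear part $V^k\to V^1\otimes V^k$ of $d$ that encodes the $\pi_1$-action. Concretely, take $V^1=\bbK w$, $V^2=\bbK u_1\oplus\bbK u_2$, $du_1=0$, $du_2=wu_1$. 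The assignment $w\mapsto g(t)\,dt$, $u_i\mapsto f_i(t)\,e_2$ is a homotopy if and only if $f_1'=0$ and $f_2'=gf_1$. Hence $(\lambda_1,\lambda_2)\sim(\lambda_1,\lambda_2+c\lambda_1)$ for every $c\in\bbK$. Free classes compute the orbit set $\pi_k/\pi_1$, exactly as free homotopy classes $[S^k,Y]$ do topologically, so they cannot give $\pi_k$.

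The missing idea is to stay in the fibre over the augmentation throughout. Via $\mu$ and its relative version, $\pi_k\langle\Lambda V\rangle$ is $\pi_0$ of the fibre of $\mathrm{map}(\Lambda V,\Omega(S^k))\to\mathrm{map}(\Lambda V,\bbK)$ over $\varepsilon$. By Theorem~\ref{thm: fundamental theorem}, Proposition~\ref{prop: space Whitehead cdga} and Lemma~\ref{lem: quasi-iso invariant fibres}, you may replace $\Omega(S^k)$ by $H(S^k)$ there. Homotopies in this fibre have no $dt\otimes 1$-component, so $f'=0$ for every nilpotent $X$. This is exactly Lemma~\ref{lem: homotopy group vs derivations} with $B=\bbK$, $\Phi=\varepsilon$ and $T=0$; the paper deliberately allows the zero algebra. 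There $\mathrm{Der}^{\varepsilon}_k(\Lambda V,\bbK)=\mathrm{Hom}(V^k,\bbK)$ and $\delta=0$ by minimality, so the bijection needs no Postnikov induction at all.

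There are two smaller points. First, Step 3 does not work ``the same way'' for $k=1$. Identifying the pinch with $e\mapsto e_1+e_2$ requires a homotopy between the two composites $H(S^k\vee S^k)\to\Omega(S^k)$. The obvious candidate is $e_i\mapsto\omega+d\bigl((1-t)g_i\bigr)$, with $dg_i$ the difference of the two images of $e_i$. It is multiplicative only because products of a $(k-1)$-form with a $k$-form on $S^k$ vanish, i.e.\ only for $k\geq 2$. Your argument never uses that $\pi_1$ is abelian, so it would otherwise show that the rational Heisenberg group $\pi_1\langle\Lambda(x,y,z\mid dz=xy)\rangle$ (all generators of degree $1$) is abelian. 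The hypothesis has to enter somewhere, for instance through $d(V^1)=0$. That makes $\varphi\mapsto\int_{S^1}\varphi|_{V^1}$ a homotopy invariant that is visibly additive under the pinch. Second, item (3) gives a rationalisation only for $\bbK=\Q$; prove the rational statement and then base change using finite type.
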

%%% In the proof of Theorem 11.3, they state that \pi_j(X) \otimes \Q is finite dimensional, so taking duals is unproblematic.

Since we will mostly deal with manifolds and orbifolds, it is more convenient to work out the examples over the real numbers using the de Rham complex of smooth differential forms.
The next result was stated in \cite[p.135ff]{Felix2001RHT} for smooth manifolds, but the proof given carries over to orbifolds.
%The proof strategy of the proof the next result given in \cite[p.135ff]{Felix2001RHT} generalises to orbifolds in the sense of \cite{Satake1956Orbifolds}.
%
\begin{prop}\label{prop: smooth de Rham theorem}
    Let $M$ be a smooth manifold or a smooth orbifold in the sense of \cite{Verona1988deRhamOrbitSpaces}, then $\Omega_{PL}^\R(M)$ and $\Omega_{dR}(M)$ are quasi-isomorphic.
\end{prop}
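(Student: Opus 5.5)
The plan is to follow the proof for smooth manifolds in \cite[p.135ff]{Felix2001RHT}. I would connect $\Omega_{PL}^\R(M)$ and $\Omega_{dR}(M)$ through an intermediate cdga of smooth forms on a simplicial set, so that we get a zig-zag of quasi-isomorphisms rather than a direct map. Let $S^\infty(M)$ be the simplicial set of smooth singular simplices $\sigma\colon \Delta^n \to M$. For an orbifold, "smooth" should mean that $\sigma$ locally lifts to smooth maps into orbifold charts $\tilde U/G$. Let $\Omega_\infty(\Delta[n])$ be the cdga of smooth forms on the standard simplex, and $A_\infty(M) = \Omega_\infty(S^\infty(M))$ the simplicial cdga $\Omega_\infty$ evaluated on $S^\infty(M)$. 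This gives the zig-zag
\begin{equation*}
\Omega_{PL}^\R(S^\infty M) \longrightarrow A_\infty(M) \longleftarrow \Omega_{dR}(M),
\end{equation*}
where the first map is the inclusion of polynomial forms into smooth forms and the second sends $\omega$ to $(\sigma^\ast\omega)_\sigma$. The inclusion $S^\infty(M)\hookrightarrow S(M)$ also induces a map $\Omega_{PL}^\R(S(M))\to \Omega_{PL}^\R(S^\infty(M))$.

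The key steps are the following.
\begin{itemize}
\item[(1)] Both simplicial cdgas $\Omega_{PL}(\Delta[\bullet])$ and $\Omega_\infty(\Delta[\bullet])$ are extendable and satisfy the Poincaré lemma. By the general machinery in \cite{Felix2001RHT}, each therefore computes the simplicial cohomology of any simplicial set, naturally and compatibly with integration over simplices. This makes the first map a quasi-isomorphism.
\item[(2)] The inclusion $S^\infty(M)\to S(M)$ induces an isomorphism in cohomology, which handles the change of simplicial set.
\item[(3)] The de Rham map $\Omega_{dR}(M)\to A_\infty(M)$ is a quasi-isomorphism. I would prove this by a Mayer--Vietoris / good-cover induction. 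Choose a cover of $M$ by orbifold charts $\tilde U/G$ with $\tilde U$ a $G$-invariant convex ball, closed under finite intersections; for orbifolds this uses a $G$-invariant geodesically convex metric. Both sides satisfy Mayer--Vietoris: for $\Omega_{dR}$ via partitions of unity, and for $A_\infty$ via small simplices and subdivision. The five lemma then reduces the claim to a single chart $\tilde U/G$.
\end{itemize}

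The main obstacle is the local statement in step (3) for orbifolds, together with step (2). Both require that a single chart $\tilde U/G$ be acyclic on each side. For de Rham forms, $\Omega_{dR}(\tilde U/G)=\Omega(\tilde U)^G$. Averaging over the finite group $G$ commutes with $d$, so $H(\Omega(\tilde U)^G)=H(\tilde U)^G=\R$ in degree $0$. For smooth simplices, the radial contraction of $\tilde U$ to the $G$-fixed centre is $G$-equivariant. It therefore descends to a contraction of $\tilde U/G$ through smooth maps, which gives acyclicity of $S^\infty(\tilde U/G)$ and also shows that smooth and continuous singular chains agree locally.

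An alternative for step (2) is to approximate continuous simplices by smooth ones on the contractible charts. Alternatively, I could replace $S^\infty$ by the simplicial set of simplices lifting locally to charts, using Verona's setting \cite{Verona1988deRhamOrbitSpaces}. Either way, the point to check is that the equivariant contraction and the averaging argument really go through on every chart.
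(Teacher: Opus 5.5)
Your proposal is correct and follows the same route as the paper. The paper gives no argument beyond citing the manifold proof in \cite[p.135ff]{Felix2001RHT} and asserting that it carries over to orbifolds; that proof is the zig-zag $\Omega_{PL}(S^\infty M)\to A_\infty(M)\leftarrow\Omega_{dR}(M)$ together with smooth approximation and a Mayer--Vietoris/good-cover argument, and what you add is exactly the orbifold-specific local input the paper leaves implicit: acyclicity of a single chart $\tilde U/G$ on both sides via averaging over $G$ and the $G$-equivariant radial contraction, plus the existence of good orbifold covers.
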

In particular, if the manifold $M$ is a closed (or more general if it is $\Q$-finite), then $\mathsf{M}_M^\R = \mathsf{M}^{\Q}_M \otimes \R$ and we can derive the real minimal model from the de Rham algebra.
We use this insight to close this section with a collection of examples we need in forthcoming sections.
\begin{example}\label{ex: Spheres and lens spaces}
    If $n$ is even, then the minimal model of the $n$-dimensional sphere $S^n$ is given by $\mathsf{M}_{S^n} = \Lambda[a_n,b_{2n-1}\,|\, db_{2n-1} = a_n^2]$.
    The model map $m \colon \mathsf{M}_{S^n} \rightarrow \Omega(S^n)$ sends $a_n$ to `the' volume form $\vol_{S^n}$ and $b_{2n-1}$ to zero.
    
    If $n$ is odd, then the minimal model of the $n$-dimensional sphere $S^n$, or more generally, of a lens space $S^n/\Z_k$, is given by $\mathsf{M}_{S^n/\Z_k} = \Lambda[a_n]$ with $\mathrm{deg}(a_n) = n$ and vanishing differential.
    The model map $m\colon \mathsf{M}_{S^n/\Z_k} \rightarrow \Omega(S^n/\Z_k)$ sends $a_n$ to `the' volume form $\mathrm{vol}_{S^n/\Z_k}$.
\end{example}
\begin{example}\label{exmpl: Rational model disc bundles}
    For all $k \in \Z$, the disc bundles of the complex line bundles $\mathcal{O}(k) \rightarrow \CP^1$ can be constructed from Hopf fibration via a Borel construction:
    \begin{equation*}
        D\mathcal{O}(k) = S^3 \times_{S^1,(\placeholder)^k} D^2 = (S^3 \times D^2)/\sim \qquad (p,\lambda) \sim (\mathrm{e}^{\iu\theta}p,\mathrm{e}^{k\iu\theta}\lambda). 
    \end{equation*}
    They are manifolds with boundary $S^3/\Z_k$ and they are all homotopy equivalent to $\CP^1$.
    Thus, their minimal models are given by $\mathsf{M}_{D\mathcal{O}(k)} = \Lambda[a_2,b_3 \,| \, db_3 = a_2^2]$.

    To model the boundary inclusion $\iota \colon \partial D\mathcal{O}(k) \hookrightarrow D\mathcal{O}(k)$, we choose the following dga-homomorphism $m \colon \mathsf{M}_{D\mathcal{O}(k)} \rightarrow \Omega_{dR}(D\mathcal{O}(k))$:
    The generator $a_2$ gets mapped to a Thom form $\tau$, which is a compactly supported $2$-form on $D\mathcal{O}(k) \setminus S^3/\Z_k$ whose integral over each fibre $D^2 \hookrightarrow D\mathcal{O}(k)$ is $1$.
    By the Thom isomorphism theorem, we have $\tau^2 = \mathrm{e}(\mathcal{O}(k)) \vol_{D\mathcal{O}(k)} = k \cdot \vol_{D\mathcal{O}(k)}$, where $\vol_{D\mathcal{O}(k)}$ is a four-form on $D\mathcal{O}(k)$ whose support does not intersect the boundary and that integrates to $1$ over $D\mathcal{O}(k)$.
    Let $\tilde{\omega} \in \Omega^3(D\mathcal{O}(k)$ be a primitive of $\vol_{D\mathcal{O}(k)}$ and $\eta \in \Omega^2(S^3/\Z_k)$ such that 
    \begin{equation*}
        d\eta = \tilde{\omega}|_{S^3/\Z_k} - \left(\int_{S^3/\Z_k} \tilde{\omega} \right) \cdot \vol_{S^3/\Z_k} = \tilde{\omega}|_{S^3/\Z_k} - \left(\int_{D^4/\Z_k} \vol_{D^4/\Z_k} \right) \cdot \vol_{S^3/\Z_k} = \tilde{\omega}|_{S^3/\Z_k} - \vol_{S^3/\Z_k} ,
    \end{equation*}
    with $\vol_{S^3/\Z_k}$ the generator from the previous example.
    Let, moreover, $\chi \colon D\mathcal{O}(k) \rightarrow [0,1]$ be a cut-off function that is identically $1$ near the boundary and whose support is disjoint from the one of $\vol_{D\mathcal{O}(k)}$.
    We now define $m(b_3) = k\omega := k(\tilde{\omega} - d(\chi \eta))$, which is still a primitive of $\tau^2$, but now restricts to the differential form $k\cdot \vol_{S^3/\Z_3}$ from the previous example.

    With this model map at hand, it is clear now that the dga-homomorphism $\Omega(\iota)$ is modelled by the maps 
    \begin{equation*}
        \mathsf{M}_{D\mathcal{O}(k)} = \Lambda[a_2,b_3] \rightarrow \mathsf{M}_{\partial D\mathcal{O}(k)} = \Lambda[\beta_3], \qquad \qquad  b_3 \mapsto k\cdot \beta_3. 
    \end{equation*}
\end{example}
\begin{rem}
    Since the composition $S^3 \hookrightarrow \mathcal{O}(-1) \rightarrow \CP^1$ is the Hopf fibration, the previous example also shows that the Hopf fibration is modelled by the dga-homomorphism $\mathsf{M}_{S^3}=\Lambda[a_2,b_3] \rightarrow \Lambda[\beta_3] = \mathsf{M}_{S^2}$ that is given by $b_3 \mapsto \beta_3$.
\end{rem}
\begin{example}\label{ex: projective spaces}
    The minimal model of a complex projective space is given by the cdga $\mathsf{M}_{\CP^n} = \Lambda[ a_2,b_{2n+1} \, | \, db_{2n+1} = a_{2}^{n+1}]$.
    The model map $m \colon \mathsf{M}_{\CP^n} \rightarrow \Omega(\CP^n)$ sends $a_2$ to $\omega_{FS}/\sqrt{2}$, the K\"ahler form of the Fubini-Study metric and $b_{2n+1}$ to zero.
\end{example}
The next example discussion rational models of (homotopy) push outs of simplicial sets (or CW complexes).
Details can be found in \cite[Section 13.1]{Felix2001RHT}.
\begin{example}\label{exmpl: pushout model}
   Let $B_1 \xrightarrow{\varphi_1} B_0 \xleftarrow{\varphi_2} B_2$ be dga-homomorphisms between two cdgas.
   Its \emph{fibre product} $B_1 \oplus_{B_0} B_2$ is the cdga
   \begin{equation*}
       B_1 \oplus_{B_0} B_2 = \bigl\{ (x,y) \, : \, \varphi_1(x) = \varphi_2(y) \bigr\} \subseteq B_1 \oplus B_2,
   \end{equation*}
   where the differential $d$ acts component-wise.

   Let $X_0$, $X_1$, $X_2$ be simplicial sets and let $\iota_j \colon X_0 \rightarrow X_j$ be simplicial maps with one map being a cofibration (i,e., injective). 
   If one of the models $\mathsf{A}(\iota_j) \colon \mathsf{A}(X_j) \rightarrow \mathsf{A}(X_0)$ is surjective, then the fibre product serves $\mathsf{A}(X_1) \oplus_{\mathsf{A}(X_0)} \mathsf{A}(X_2)$ is a rational model for the (homotopy) push out $Y$ of $X_1 \xleftarrow{\iota_1} X_0 \xrightarrow{\iota_2} X_2$. 
\end{example}

Besides allowing for a calculation of real homotopy groups out of the de Rham complex, we can use the minimal model of a compact manifold $M$ with boundary $\partial M$ to get information about the rational homotopy groups of its homotopy automorphisms (relative to the boundary) using Theorem \ref{Main Thm: Homotopy Automorphism via Derivations}.
As a demonstration, we present these calculations for the manifolds $D\mathcal{O}(k)$ with boundary $S^3/\Z_k$ and our results to the real homotopy groups of the topological monoid $\hAut(D\mathcal{O}(k))$ of `absolute' homotopy equivalences.

\begin{example}\label{exmpl: hAut of disc bundles}
    We have seen in Example \ref{exmpl: Rational model disc bundles} that the  minimal models of $D\mathcal{O}(k)$ and its boundary $S^3/\Z_k$ are given by $\mathsf{M}_{D\mathcal{O}(k)} = \Lambda[a_2,b_3 \, |\, db_3 = a_2^2]$ and $\mathsf{M}_{S^3/\Z_k} = \Lambda[\beta_3]$. 
    Moreover, it was shown that the real model for the boundary inclusion is modelled by the dga-homomorphism $\mathsf{M}(\iota) \colon \mathsf{M}_{D\mathcal{O}(k)} \rightarrow \mathsf{M}_{S^3/\Z_k}$ that send $a_2$ to $0$ and $b_3$ to $\beta_3$.
    In particular, $\ker \mathsf{M}(\iota) = \bigoplus_{n\neq 0,3} \mathsf{M}^n_{D\mathcal{O}(k)}$.

    Since (graded) derivations from a free algebra are completely determined by the image of its generating vector space, we observe that
    \begin{equation*}
        \Der_n(\mathsf{M}_{D\mathcal{O}(k)}, \mathsf{M}_{D\mathcal{O}(k)}) = \begin{cases}
            \R, & \text{if } n\leq 3, \\
            0, & \text{if }n \geq 4,
        \end{cases} \quad \text{while} \quad 
        \Der_n(\mathsf{M}_{D\mathcal{O}(k)}, \ker \mathsf{M}(\iota)) =  \begin{cases}
            \R, & \text{if } n=1, \\
            0, & \text{if }n \geq 2. 
        \end{cases}
    \end{equation*}
    with $\Der(\mathsf{M}_{D\mathcal{O}(k)}, \mathsf{M}_{D\mathcal{O}(k)})$ generated by the linear maps $a_2 \otimes b_3^\vee$, $1 \otimes a_2^\vee$, and $1\otimes b_3^\vee$ of degree $1$, $2$, and $3$.
    Observe that the latter two linear maps take non-zero values in $\mathsf{M}_{D\mathcal{O}(k)}^0 = \R\cdot 1$, which does not lie in the differential ideal $\ker \mathsf{M}(\iota)$.

    A straight-forward computation shows that
    \begin{equation*}
        \delta_1(a_2\otimes b_3^\vee) = 0, \quad \delta_2(1\otimes a_2^\vee) = -2a_2\otimes b_3^\vee, \quad \text{and} \quad \delta_3(1\otimes b_3^\vee) = 0, 
    \end{equation*}
    and Theorem \ref{Main Thm: Homotopy Automorphism via Derivations} now implies that 
    \begin{equation*}
        \pi_n(\hAut(D\mathcal{O}(k)),\id)\otimes \R \cong H_n(\Der(\mathsf{M}_{D\mathcal{O}(k)},\mathsf{M}_{D\mathcal{O}(k)}),\delta) = \begin{cases}
            \R, & \text{if } n=3, \\
            0, & \text{if } n\neq 3, 
        \end{cases} 
    \end{equation*}
    while 
    \begin{equation*}
        \pi_n(\hAut_\partial(D\mathcal{O}(k)),\id)\otimes \R \cong H_n(\Der(\mathsf{M}_{D\mathcal{O}(k)},\ker \mathsf{M}(\iota)),\delta) = \begin{cases}
            \R, & \text{if } n=1, \\
            0, & \text{if } n\neq 1. 
        \end{cases}    
    \end{equation*}
\end{example}
\section{Algebraic models for homotopy automorphisms}\label{Section: App hAut via RHT}

In this section, we are going to study mapping spaces using rational homotopy theory. 
The main goal of this section is to give a detailed proof of Theorem \ref{Main Thm: Homotopy Automorphism via Derivations} and its generalisation, Theorem \ref{thm: hAut via derivations (general)} below, which state that the rational homotopy groups of a (relative) mapping space $\mathrm{C}_f(X,Y)$ can be computed in terms of derivations between the rational models of $X$ and $Y$.

The presentation closely follows the philosophy of \cite{Bousfield1976PLdeRham} by mostly relying on homotopical algebra.
In fact, we are going to generalise Section 5.8 in loc. cit. from augmentations to general surjective dga-homomorphisms.
We first explore the relation between the topological and simplicial mapping spaces to their algebraic counterparts and, in a second step, relate the homotopy groups of the algebraic mapping spaces to appropriate homology groups of derivations.

\subsubsection*{Topological and Simplicial Mapping Spaces}

Although the category of simplicial sets and topological spaces are Quillen equivalent (via the geometric realisation functor $|\placeholder|$ and the singular set functor $S(\placeholder)$ ), it is worthwhile to recall their precise relation.

For two (compactly generated Hausdorff) spaces $X$ and $Y$, let $\mathrm{C}(X,Y)$ be the space of all continuous maps equipped with the kellification of the compact-open topology; that is a subset of $\mathrm{C}(X,Y)$ is closed if and only if $A \cap C$ is closed with respect to the compact open topology for every compact subspace $C \subseteq \mathrm{C}(X,Y)$.
In this way, mapping spaces exhibit an exponential law $\mathrm{C}(X \times Y,Z) \cong \mathrm{C}(X,\mathrm{C}(Y,Z))$.
If $\iota \colon A \hookrightarrow X$ is a cofibration, then $|_A = (\placeholder)\circ \iota_A \colon \mathrm{C}(X,Y) \rightarrow \mathrm{C}(A,Y)$ is a Serre fibration and the fibre of $f\colon A \rightarrow Y$ is denoted by $\mathrm{C}_f(X,Y)$ or $\mathrm{C}_{A,f}(X,Y)$ if we wish to emphasise the subspace as well. 
If $X=Y$ and $f=\id$, we may simply write $\mathrm{C}_A(X)$.

For two simplicial sets $X$ and $Y$, the simplicial analogue of a mapping space is the simplicial set $\mathrm{map}(X,Y)$ whose set of $n$-simplicies is given by the set
\begin{equation*}
    \mathrm{map}_n(X,Y) = \mathrm{Hom}_{\mathsf{sSet}}(\Delta[n] \times X,Y)
\end{equation*}
and the structure maps are precomposition with $\phi \times \id_X$ where $\phi \colon \Delta[m] \rightarrow \Delta[n]$ is a morphism in the simplex category.

It is known that $\mathrm{map}(X,Y)$ is a Kan set if the target $Y$ is a Kan set, see \cite[Prop 1.17]{curtis1971simplicial}, and that $\mathrm{map}(X,Y)$ is the internal right-adjoint functor to the product functor \cite[Proposition I.5.1]{goerss2009simplicial}, which means that, for all simplicial set $Z$, there exists a natural bijection, the `simplicial exponential law',
\begin{equation*}
    \mathrm{Hom}_{\mathsf{sSet}}(Z \times X,Y) \cong \mathrm{Hom}_{\mathsf{sSet}}\left(Z,\mathrm{map}(X,Y)\right).
\end{equation*}
If $X,Y$, and $Z$ are simplicial sets, then the composition of simplicial maps extends to a simplicial map
\begin{equation*}
    \mathrm{map}(X,Y) \times \mathrm{map}(Y,Z) \rightarrow \mathrm{map}(X,Z).
\end{equation*}
Explicitly, the composition $f \circ g$ of $g \in \mathrm{map}_n(X,Y) = \mathrm{Hom}_{\mathsf{sSet}}(\Delta[n] \times X,Y)$ with $f \in \mathrm{map}_n(Y,Z) = \mathrm{Hom}_{\mathsf{sSet}}(\Delta[n] \times Y,Z)$ is given by
\begin{equation*}
    \xymatrix{ \Delta[n] \times X \ar[rr]^-{(\id_{\Delta[n]},g)} && \Delta[n] \times Y \ar[rr]^-f && Z.}
\end{equation*}
For every topological space $X$, the counit $\mathrm{ev} \colon |S(X)| \rightarrow X$ is a weak homotopy equivalence, where $S(X)$ denotes the singular set of $X$, see \cite[p.207]{curtis1971simplicial}.
Furthermore, observe that the graph construction yields a simplicial map
\begin{equation*}
    S(\mathrm{C}(X,Y)) \rightarrow \mathrm{map}(S(X),S(Y)) 
\end{equation*}
that sends an element $f \colon \Delta^n \rightarrow \mathrm{C}(X,Y)$ of $S_n(\mathrm{C}(X,Y))$ with adjoint map $\mathrm{Ad}(f) \colon X \times \Delta^n \rightarrow X$ to the simplicial map $S(X) \times \Delta[n] \rightarrow S(X)$ that is given by
\begin{equation*}
     S_m(X) \times \Delta[n]_m \ni (\sigma,\varphi) \mapsto \bigl( \Delta^m \xrightarrow{ (\sigma, \varphi) } X \times \Delta^n \xrightarrow{\mathrm{Ad}(f)} X \bigr) \in S_m(X).
\end{equation*}
Since the two simplicial sets $S\mathrm{C}(X,Y)$ and $\mathrm{map}(S(X),S(Y))$ are Kan, it is easy to see using the combinatorial description of their homotopy groups that this map is a weak homotopy equivalence and induces the canonical identification\footnote{Of course, this can also be deduced from the exponential law.}
\begin{equation*}
    \pi_n(\mathrm{C}(X,Y),f) \cong \pi_0\bigl(  \mathrm{C}_{\{\ast\}\times X,f}(S^n \times X, X)\bigr). %= [ X \times S^n, X \times \ast; X]_f 
\end{equation*}
We thus have a zig-zag of weak homotopy equivalences 
\begin{equation}\label{eq: zig-zag of topolgical mapping spaces}
    |\mathrm{map}(S(X),S(Y))| \xleftarrow{\simeq} |S(\mathrm{C}(X,Y))| \xrightarrow{\simeq} \mathrm{C}(X,Y), 
\end{equation}
and so we are allowed to work with $S(\mathrm{C}(X,Y))$, if we desire to apply simplicial methods.
The geometric realisation turns a Kan-fibration into a Serre fibration, see \cite[Theorem 10.10]{goerss2009simplicial}, and the singular set functor turns a Serre fibration into a Kan fibration.
Since this zig-zag (\ref{eq: zig-zag of topolgical mapping spaces}) is natural in both components, we also get a zig-zag for the relative mapping spaces:
\begin{equation}\label{eq: zig-zag of relative topolgical mapping spaces}
    |\mathrm{map}_{Sf}(S(X),S(Y))| \xleftarrow{\simeq} |S(\mathrm{C}_f(X,Y))| \xrightarrow{\simeq} \mathrm{C}_f(X,Y). 
\end{equation}
\subsubsection*{Algebraic Mapping Spaces}
Following Bousfield and Gugenheim \cite{Bousfield1976PLdeRham}, we construct the counterpart of mapping spaces in the category of commutative differential graded algebras $\mathsf{CDGA}$ over a field $\bbK$ of characteristic zero.
\begin{definition}
    For two cdgas $B_0$ and $B_1$, the simplicial set $\mathrm{map}(B_1,B_2)$ is defined as follows:
    The set of all $n$-simplices is
    \begin{equation*}
        \mathrm{map}_n(B_1,B_2) := \mathrm{Hom}_{\mathsf{sSet}}\bigl(B_1,\Omega(\Delta[n])\otimes B_2\bigr)
    \end{equation*}
    and face and degeneracy maps are given by composition with $\Omega(\phi) \otimes \id_{B_2}$ where $\phi \colon \Delta[m] \rightarrow \Delta[n]$ is a morphism in the simplex category.
\end{definition}
Note that this is a generalisation of the spatial realisation functor: $\langle B_1 \rangle = \mathrm{map}(B_1,\bbK)$.

We remind the reader that a cofibration in $\mathsf{CDGA}$ is a dga-homomorphism $C_1 \rightarrow C_2$ that has the left lifting property with respect to all surjective quasi-isomorphism, i,e., for each commutative outer square exists a dashed filler:
\begin{equation*}
    \xymatrix@R-1em{ C_1 \ar[rr] \ar@{^{(}->}[d]_\iota && B_1 \ar@{->>}[d]^\simeq \\
    C_2 \ar[rr] \ar@{-->}[rru] && B_2. }
\end{equation*}
If $C_1 = \bbK\cdot 1$ and $\iota$ is the unit map, then $C_2$ is called cofibrant, and Sullivan algebras are examples of such.

We will derive many of our results from the following `fundamental result'.
\begin{theorem}\cite[Theorem 5.3]{Bousfield1976PLdeRham}\label{thm: fundamental theorem}
    Let $\iota \colon C_1 \rightarrow C_2$ be a (dga-)cofibration and let $p \colon B \rightarrow T$ be a surjective dga-homomorphism.
    Then the natural simplicial map
    \begin{equation*}
        \iota \ast p = \bigl( (\placeholder)\circ\iota, p \circ (\placeholder) \bigr) \colon \mathrm{map}(C_2,B) \rightarrow \mathrm{map}(C_1,B) \times_{\mathrm{map}(C_1,T)} \mathrm{map}(C_2,T)
    \end{equation*}
    is a Kan fibration.
    If, in addition, $\iota$ or $p$ is a quasi-isomorphism, then $\iota \ast p$ is a weak-equivalence.
\end{theorem}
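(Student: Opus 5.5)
The plan is to translate the simplicial lifting problems defining Kan fibrations and acyclic Kan fibrations into lifting problems in $\mathsf{CDGA}$, via an adjunction, and then to solve these using that $\iota$ is a cofibration. A Kan fibration is characterised by the right lifting property against the horn inclusions $\Lambda^k[n] \hookrightarrow \Delta[n]$. A map is a Kan fibration and a weak equivalence if and only if it has the right lifting property against the boundary inclusions $\partial\Delta[n] \hookrightarrow \Delta[n]$. Both sources are finite simplicial sets. So the first step is to extend the defining formula of $\mathrm{map}(C,B)$ to finite simplicial sets $K$, namely
\begin{equation*}
    \mathrm{Hom}_{\mathsf{sSet}}\bigl(K,\mathrm{map}(C,B)\bigr) \cong \mathrm{Hom}_{\mathsf{CDGA}}\bigl(C,\Omega(K)\otimes B\bigr).
\end{equation*}
To prove this, I would write $K$ as a finite colimit of standard simplices, so that $\Omega(K)$ is the corresponding finite limit (an equaliser of finite products) of the $\Omega(\Delta[n])$. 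Then I would use that $\placeholder\otimes B$ over a field is exact, and hence commutes with finite limits.

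With this adjunction, a lifting problem for $\iota \ast p$ against an inclusion $K \hookrightarrow \Delta[n]$ becomes the following problem. We are given compatible maps $C_1 \to \Omega(\Delta[n])\otimes B$ and
\begin{equation*}
    C_2 \to P := \bigl(\Omega(K)\otimes B\bigr)\times_{\Omega(K)\otimes T}\bigl(\Omega(\Delta[n])\otimes T\bigr),
\end{equation*}
and we must extend along $\iota$ to a map $C_2 \to \Omega(\Delta[n])\otimes B$ lifting through the natural map $q\colon \Omega(\Delta[n])\otimes B \to P$. Since $\iota$ is a cofibration, it has the left lifting property against surjective quasi-isomorphisms. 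If $\iota$ is moreover a quasi-isomorphism, it has the left lifting property against all surjections, because the Bousfield--Gugenheim model structure has the surjections as its fibrations. Therefore it suffices to prove two statements about $q$:
\begin{itemize}
    \item[(a)] $q$ is always surjective, and is a quasi-isomorphism when $K = \Lambda^k[n]$;
    \item[(b)] for $K = \partial\Delta[n]$, $q$ is a quasi-isomorphism whenever $p$ is.
\end{itemize}
Statement (a) gives the fibration claim, and together with (b) it gives the acyclic claim in both cases.

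The core of the proof, and the main obstacle I expect, is the homological algebra of $q$. Write $J = \ker\bigl(\Omega(\Delta[n]) \to \Omega(K)\bigr)$. The restriction $\Omega(\Delta[n]) \to \Omega(K)$ is surjective by the extendability of polynomial forms, which is the key input from \cite{Bousfield1976PLdeRham}.
\begin{itemize}
    \item[(1)] \emph{Surjectivity of $q$.} Given $(x,y) \in P$, first lift $x$ to some $\tilde{x} \in \Omega(\Delta[n])\otimes B$. The difference between $y$ and $(\id\otimes p)(\tilde{x})$ then lies in $J\otimes T$. Since $p$ is surjective, this difference lifts to $J\otimes B$, and correcting $\tilde{x}$ by the lift gives a preimage of $(x,y)$.
    \item[(2)] \emph{The kernel of $q$.} A diagram chase using exactness of $\placeholder\otimes\placeholder$ over $\bbK$ identifies $\ker q = J \otimes \ker p$.
\end{itemize}
By the long exact sequence of the short exact sequence $0 \to \ker q \to \Omega(\Delta[n])\otimes B \to P \to 0$, the map $q$ is a quasi-isomorphism if and only if $J\otimes\ker p$ is acyclic. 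By the Künneth formula, this holds as soon as $J$ or $\ker p$ is acyclic.
\begin{itemize}
    \item[(3)] \emph{Case $K = \Lambda^k[n]$.} Here $J$ is acyclic, because $\Omega(\Delta[n])$ and $\Omega(\Lambda^k[n])$ both have the cohomology of a point by the polynomial Poincaré lemma and the de Rham theorem for $\Omega_{PL}$.
    \item[(4)] \emph{Case $p$ a quasi-isomorphism.} Here $\ker p$ is acyclic, because $p$ is a surjective quasi-isomorphism.
\end{itemize}
Together these establish (a) and (b), which finishes the argument.
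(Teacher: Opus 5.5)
The paper gives no proof of its own here; it simply quotes \cite[Theorem 5.3]{Bousfield1976PLdeRham}. Your argument is correct and is essentially the standard Bousfield--Gugenheim proof: the adjunction $\mathrm{Hom}_{\mathsf{sSet}}(K,\mathrm{map}(C,B))\cong\mathrm{Hom}(C,\Omega(K)\otimes B)$ for finite $K$ reduces both claims to lifting $\iota$ against the corner map $q$, which is surjective by extendability of polynomial forms and has kernel $J\otimes\ker p$, acyclic by K\"unneth when $K$ is a horn or when $p$ is a quasi-isomorphism.
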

We will mostly use the following special case where $C_1 = \bbK\cdot 1$.
\begin{cor}\label{cor: weak fundamental theorem}
    If $C$ is a cofibrant cdga, e.g. a Sullivan algebra, and $p \colon B \rightarrow T$ a surjective dga-homomorphism, then the map $p \circ (\placeholder) \colon \mathrm{map}(C,B)\rightarrow \mathrm{map}(C,T)$ is a Kan fibration. 
    If $p$ is additional a quasi-isomorphism, then $p\circ(\placeholder)$ is a weak equivalence.
\end{cor}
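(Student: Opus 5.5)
This corollary is the special case $C_1 = \bbK\cdot 1$, $C_2 = C$ of Theorem \ref{thm: fundamental theorem}, with $\iota\colon \bbK\cdot 1 \to C$ the unit map. Cofibrancy of $C$ means exactly that this unit map is a cofibration, so the theorem applies to the pair $(\iota, p)$. It remains to identify the target of $\iota \ast p$ and the map itself.

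The key step is computing $\mathrm{map}(\bbK\cdot 1, B)$ for an arbitrary cdga $B$. An $n$-simplex is a dga-homomorphism $\bbK \to \Omega(\Delta[n])\otimes B$, and a unital homomorphism out of the ground field is forced to be the unit map. Hence there is exactly one such map in each simplicial degree, so $\mathrm{map}(\bbK,B) = \Delta[0]$ is the terminal simplicial set, and likewise $\mathrm{map}(\bbK,T)=\Delta[0]$. This holds even when $B$ or $T$ is the zero algebra, since the unit map still exists uniquely; that is the only point where the convention allowing $1=0$ needs a moment's thought.

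The fibre product in Theorem \ref{thm: fundamental theorem} then becomes $\Delta[0]\times_{\Delta[0]} \mathrm{map}(C,T) \cong \mathrm{map}(C,T)$. Under this identification the first component $(\placeholder)\circ\iota$ is the unique map to the point. So $\iota\ast p$ is just $p\circ(\placeholder)\colon \mathrm{map}(C,B)\to\mathrm{map}(C,T)$, which the theorem says is a Kan fibration.

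For the second assertion, if $p$ is also a quasi-isomorphism, the second half of Theorem \ref{thm: fundamental theorem} says $\iota\ast p$ is a weak equivalence, and hence so is $p\circ(\placeholder)$. Finally, Sullivan algebras are cofibrant, as recalled earlier from \cite{Bousfield1976PLdeRham} and \cite[Theorem 8.11]{berglund2012RHT}, which justifies the example named in the statement.
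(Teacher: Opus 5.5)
Your proposal is correct and follows the paper's route. The paper presents this corollary precisely as the special case $C_1 = \bbK\cdot 1$ of Theorem~\ref{thm: fundamental theorem}, and your reduction uses the same key fact that $\mathrm{map}(\bbK,B)$ is a point (which the paper also uses in the next corollary) to identify $\iota\ast p$ with $p\circ(-)$.
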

\begin{cor}
    Let $B$ and $C$ be two cdgas with $C$ cofibrant.
    Then $\mathrm{map}(C,B)$ is a Kan set.
    In particular, if $X$ a simplicial set, then the simplicial sets $\mathrm{map}(X,\langle C \rangle)$ and $\mathrm{map}(C,\Omega(X))$ are Kan sets.
\end{cor}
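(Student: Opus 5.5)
The first claim follows from the preceding corollary (Corollary \ref{cor: weak fundamental theorem}) with $T = 0$, the zero algebra. Since $\mathsf{CDGA}$ explicitly admits $1=0$, the zero algebra is the terminal object. For every cdga $B$ the unique map $p \colon B \rightarrow 0$ is a surjective dga-homomorphism. For each $n$ we have
\begin{equation*}
    \mathrm{map}_n(C,0) = \mathrm{Hom}_{dga}\bigl(C,\Omega(\Delta[n])\otimes 0\bigr) = \mathrm{Hom}_{dga}(C,0),
\end{equation*}
and this is a single point. Hence $\mathrm{map}(C,0)$ is the terminal simplicial set $\Delta[0]$. Applying the corollary to $p$, the map $\mathrm{map}(C,B) \rightarrow \Delta[0]$ is a Kan fibration, which means precisely that $\mathrm{map}(C,B)$ is a Kan set. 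Alternatively, one can invoke Theorem \ref{thm: fundamental theorem} directly with $C_1 = \bbK$, $C_2 = C$ and $T = 0$. Its target is then $\mathrm{map}(\bbK,B)\times_{\mathrm{map}(\bbK,0)} \mathrm{map}(C,0)$, which is a point, because $\mathrm{Hom}_{dga}(\bbK,\Omega(\Delta[n])\otimes B)$ consists only of the unit map.

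For the second claim, the case $\mathrm{map}(C,\Omega(X))$ is the first claim with $B = \Omega(X)$. For $\mathrm{map}(X,\langle C\rangle)$, it suffices to show that $\langle C\rangle$ is Kan, because $\mathrm{map}(X,Y)$ is Kan whenever $Y$ is Kan \cite[Prop 1.17]{curtis1971simplicial}. But $\langle C\rangle = \mathrm{map}(C,\bbK)$, as noted after the definition of algebraic mapping spaces, so this is the first claim with $B = \bbK$.

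The only delicate point is making sure the degenerate algebra $0$ is legitimately an object of $\mathsf{CDGA}$ and that $B \rightarrow 0$ counts as a surjection to which Theorem \ref{thm: fundamental theorem} applies. The paper's convention allows $1=0$, so this holds. If one prefers to avoid this, one can instead use the first route through Theorem \ref{thm: fundamental theorem} with $T=0$; either way, the identification of the target with a point is the step that needs care.
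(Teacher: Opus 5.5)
Your proof is correct and is essentially the paper's argument. The paper applies Theorem~\ref{thm: fundamental theorem} to the unit $\bbK\hookrightarrow C$ and notes that the restriction map lands in the point $\mathrm{map}(\bbK,B)=\{1_B\}$; this tacitly takes $T=0$, which is the choice you make explicit. It then deduces the second claim exactly as you do, from $\langle C\rangle=\mathrm{map}(C,\bbK)$ and the fact that $\mathrm{map}(X,Y)$ is Kan whenever $Y$ is Kan.
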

\begin{proof}
    By definition, $\iota \colon \bbK \hookrightarrow C$ is a cofibration, so by Theorem \ref{thm: fundamental theorem} the restriction $\mathrm{map}(C,\Omega(X)) \rightarrow \mathrm{map}(\bbK,B) = \{1_B\}$ is Kan fibration; hence $\mathrm{map}(C,B)$ is a Kan set. %(because dga-homomorphisms are assumed to be unital)  
    It follows, in particular, that $\langle C \rangle = \mathrm{map}(C,\bbK)$ is Kan, so the simplicial mapping space $\mathrm{map}(X,\langle C \rangle)$ is Kan, too.
\end{proof}

The next result should be thought of as a space-level version of Whitehead's theorem in the algebraic set-up.
\begin{prop}\cite[Proposition 5.7]{Bousfield1976PLdeRham}\label{prop: space Whitehead cdga}
    Let $C$ be a cofibrant cdga and let $f \colon B_1 \rightarrow B_2$ be a (not necessarily surjective) quasi-isomorphism.
    Then post composition with $f$ induces a weak equivalence $f \circ (\placeholder) \colon \mathrm{map}(C,B_1) \rightarrow \mathrm{map}(C,B_2)$.
\end{prop}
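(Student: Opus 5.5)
The plan is to reduce to Corollary \ref{cor: weak fundamental theorem}, which already handles \emph{surjective} quasi-isomorphisms, by a Ken Brown type factorisation of $f$. Concretely, I would factor $f$ as $f = p \circ j$, where $p$ is a surjective quasi-isomorphism and $j$ is a section of another surjective quasi-isomorphism $q$. I would build this factorisation from the path object $\Lambda[t,dt]\otimes B_2$ of $B_2$.

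\emph{Step 1 (construction).} Set
\begin{equation*}
    P := B_1 \oplus_{B_2} \bigl(\Lambda[t,dt]\otimes B_2\bigr) = \bigl\{ (b,h) \, : \, f(b) = (\ev_0\otimes\id)(h) \bigr\},
\end{equation*}
with the componentwise differential, as in Example \ref{exmpl: pushout model}. I would then define three dga-homomorphisms:
\begin{itemize}
    \item[(a)] $q \colon P \to B_1$, given by $q(b,h) = b$;
    \item[(b)] $p \colon P \to B_2$, given by $p(b,h) = (\ev_1\otimes\id)(h)$;
    \item[(c)] $j \colon B_1 \to P$, given by $j(b) = (b, 1\otimes f(b))$.
\end{itemize}
One checks directly that $q\circ j = \id_{B_1}$ and $p \circ j = f$.

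\emph{Step 2 (properties of $q$ and $p$).}
\begin{itemize}
    \item[(i)] The map $\ev_0\otimes\id \colon \Lambda[t,dt]\otimes B_2 \to B_2$ is surjective. It is also a quasi-isomorphism, since $\Lambda[t,dt]$ is acyclic in positive degrees and $H^0(\Lambda[t,dt]) = \bbK$, so the Künneth theorem applies.
    \item[(ii)] $q$ is the pullback of $\ev_0\otimes\id$ along $f$. It is therefore surjective, and its kernel $\{0\}\times \ker(\ev_0\otimes\id)$ is acyclic. The long exact sequence in cohomology then shows that $q$ is a quasi-isomorphism.
    \item[(iii)] From $q\circ j=\id$ and two-out-of-three, $j$ is a quasi-isomorphism. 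Then $p\circ j = f$ gives, again by two-out-of-three, that $p$ is a quasi-isomorphism.
    \item[(iv)] $p$ is surjective: for $y\in B_2$ the element $(0, t\otimes y)$ lies in $P$, because $\ev_0(t)=0$, and it maps to $y$.
\end{itemize}

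\emph{Step 3 (passing to mapping spaces).} Since $C$ is cofibrant and $p,q$ are surjective quasi-isomorphisms, Corollary \ref{cor: weak fundamental theorem} shows that $p\circ(\placeholder)$ and $q\circ(\placeholder)$ are weak equivalences of Kan sets. Functoriality gives
\begin{equation*}
    (q\circ(\placeholder))\circ(j\circ(\placeholder)) = \id_{\mathrm{map}(C,B_1)}.
\end{equation*}
Two-out-of-three for weak equivalences of simplicial sets then shows that $j\circ(\placeholder)$ is a weak equivalence. Hence $f\circ(\placeholder) = (p\circ(\placeholder))\circ(j\circ(\placeholder))$ is a weak equivalence, which proves the proposition.

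\emph{Main obstacle.} The only non-formal point is Step 2, namely verifying that $q$ and $p$ really are surjective quasi-isomorphisms. This reduces to the acyclicity of $\ker(\ev_0)\subseteq\Lambda[t,dt]$ and the compatibility of that acyclicity with tensoring by $B_2$. Over a field of characteristic zero this is immediate: the contracting homotopy $t^n \mapsto 0$, $t^{n}dt \mapsto t^{n+1}/(n+1)$ works. The rest of the argument is purely formal.
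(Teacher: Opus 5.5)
Your proof is correct. The paper itself gives no proof of this proposition: it is quoted from Bousfield--Gugenheim, so there is no internal argument to compare against.

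What you have written is the standard Ken Brown lemma argument made explicit. Every cdga is fibrant, since the map to the terminal algebra $0$ is surjective. Hence a functor that turns surjective quasi-isomorphisms into weak equivalences turns all quasi-isomorphisms into weak equivalences. Your mapping path object $P = B_1 \oplus_{B_2} (\Lambda[t,dt]\otimes B_2)$ supplies exactly the factorisation $f = p\circ j$ with $q\circ j = \mathrm{id}$ that this argument needs.

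The individual checks all go through:
\begin{enumerate}
\item $j(b)\in P$, since $(\mathrm{ev}_0\otimes\mathrm{id})(1\otimes f(b)) = f(b)$.
\item $(0,t\otimes y)\in P$ maps to $y$, so $p$ is surjective.
\item $\ker q = 0\times(\ker(\mathrm{ev}_0)\otimes B_2)$ is contractible via your homotopy tensored with the identity. The Koszul signs cancel because the homotopy acts only on the first factor, and this is where characteristic zero enters.
\item Two-out-of-three, first for quasi-isomorphisms and then for weak equivalences of simplicial sets, finishes the argument.
\end{enumerate}

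Compared with the bare citation, your route shows that the proposition is a purely formal consequence of Corollary \ref{cor: weak fundamental theorem}. Only the trivial-fibration case of Theorem \ref{thm: fundamental theorem}, with $C_1$ the ground field, is used. So in this section the paper's dependence on Bousfield--Gugenheim reduces to that single theorem, whereas the citation is merely shorter.
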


We now turn our attention to relative mapping sets that should be thought of as the algebraic counterparts to $\mathrm{C}_f(X,Y)$.
\begin{definition}
    Let $B_2 \twoheadrightarrow T$ be a surjective dga-homomorphism and let $\varphi \colon B_1 \rightarrow T$ be a fixed dga-homomorphism.
    Define the $\varphi$-relative mapping set $\mathrm{map}^{T,\varphi}(B_1,B_2)$ as the following pullback
    \begin{equation*}
        \xymatrix{  \mathrm{map}^{T,\varphi}(B_1,B_2) \ar[rr] \ar[d] && \mathrm{map}(B_1,B_2) \ar[d]^{p \circ (\placeholder)}  \\
        \ast \ar[rr]_\varphi && \mathrm{map}(B_1,T). }
    \end{equation*}
\end{definition}
If it does not lead to confusion, we will either drop $T$ or $\varphi$ from the decoration.
Note that, as $p \circ (\placeholder)$ is a Kan fibration, the weak homotopy type of $\mathrm{map}^{T,\varphi}(B_1,B_2)$ only depends on the homotopy class of $\varphi \colon B_1 \rightarrow T$.

The next results essentially says that we can replace cdgas with their rational models without changing the weak homotopy type of their relative mapping sets.
\begin{lemma}\label{lem: quasi-iso invariant fibres}
    Let $H \colon B_1 \rightarrow \Lambda[t,dt] \otimes T_2$ be a homotopy that makes the following diagram, in which the horizontal arrows are quasi-isomorphism,
    \begin{equation*}
        \xymatrix{ B_1 \ar[rr]^{F}_\simeq \ar@{->>}[d]_{p_1} && B_2 \ar@{->>}[d]^{p_2}\\ 
        T_1 \ar[rr]_f^\simeq && T_2 }
    \end{equation*}
    commutative up to homotopy. 
    Let $C$ be a cofibrant cdga, $\Phi \colon C \rightarrow B_1$ a dga-homomorphism, and set $\varphi = p_1 \circ \Phi$.
    Then the homotopy $H$ induces a weak equivalence $\mathrm{map}^{T_1,\varphi}(C,B_1) \rightarrow \mathrm{map}^{T_2,p_2 F\Phi}(C,B_2)$.
\end{lemma}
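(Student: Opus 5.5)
The plan is to compare the two fibre sequences of Kan fibrations given by restriction to $T_1$ and $T_2$, and then apply the long exact sequence and the five lemma. On the first side we have
\begin{equation*}
\mathrm{map}^{T_1,\varphi}(C,B_1) \rightarrow \mathrm{map}(C,B_1) \xrightarrow{p_1\circ(\placeholder)} \mathrm{map}(C,T_1).
\end{equation*}
On the second side we have the analogous sequence with $B_2,T_2,p_2$ and basepoint $p_2F\Phi$. By Corollary \ref{cor: weak fundamental theorem} the right-hand maps are Kan fibrations, and the left-hand objects are their fibres over the given basepoints.

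\textbf{Step 1: the total spaces and bases.} Proposition \ref{prop: space Whitehead cdga} shows that $F\circ(\placeholder)\colon \mathrm{map}(C,B_1)\to\mathrm{map}(C,B_2)$ and $f\circ(\placeholder)\colon\mathrm{map}(C,T_1)\to\mathrm{map}(C,T_2)$ are weak equivalences.

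\textbf{Step 2: the non-commuting square.} The square of mapping spaces does not commute strictly; $p_2F\circ(\placeholder)$ and $fp_1\circ(\placeholder)$ differ by the homotopy $H$. I would rectify this by replacing $\mathrm{map}(C,B_2)$ with a path-space model. Concretely, let
\begin{equation*}
P = B_2\times_{T_2} (\Lambda[t,dt]\otimes T_2)
\end{equation*}
be the fibre product along $p_2$ and $\ev_0$. It comes with the surjection $\ev_1\circ\mathrm{pr}_2\colon P\to T_2$, and the projection $P\to B_2$ is a quasi-isomorphism, because $\ev_0\colon\Lambda[t,dt]\otimes T_2\to T_2$ is a surjective quasi-isomorphism and pullbacks of these along any map stay quasi-isomorphisms.

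Orient $H$ so that $\ev_0H=p_2F$ and $\ev_1H=fp_1$, reversing the interval coordinate if needed. Then $(F,H)\colon B_1\to P$ is a dga-homomorphism, and the square
\begin{equation*}
B_1\xrightarrow{(F,H)}P,\qquad T_1\xrightarrow{f}T_2
\end{equation*}
with vertical maps $p_1$ and $\ev_1\mathrm{pr}_2$ commutes on the nose.

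\textbf{Step 3: the strict comparison.} The strictly commuting square induces a map of fibration sequences
\begin{equation*}
\mathrm{map}^{T_1,\varphi}(C,B_1)\to\mathrm{map}^{T_2,fp_1\Phi}(C,P).
\end{equation*}
On total spaces and bases it is a weak equivalence by Proposition \ref{prop: space Whitehead cdga}, using that $(F,H)$ is a quasi-isomorphism by two-out-of-three. The long exact sequences and the five lemma then show the map on fibres is a weak equivalence, over every basepoint. The low-degree terms $\pi_0,\pi_1$ need the usual care: for $\pi_0$ one uses surjectivity of $\pi_1$ on the base and the action argument.

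\textbf{Step 4: back to $B_2$.} It remains to identify $\mathrm{map}^{T_2,fp_1\Phi}(C,P)$ with $\mathrm{map}^{T_2,p_2F\Phi}(C,B_2)$. The fibre over $fp_1\Phi$ of $\mathrm{map}(C,P)\to\mathrm{map}(C,T_2)$ can be rewritten via Theorem \ref{thm: fundamental theorem} as a homotopy fibre of $p_2\circ(\placeholder)$ over a point. That point is joined to $p_2F\Phi$ by the path in $\mathrm{map}(C,T_2)$ given by the $1$-simplex $H\Phi$, using $\Lambda[t,dt]\cong\Omega(\Delta[1])$.

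Since fibres of a Kan fibration over points joined by a path are weakly equivalent via transport, this yields the claimed equivalence, and the induced map is the one determined by $H$. Alternatively one can use the evident map from the fibre of $P$ to the homotopy fibre of $\mathrm{map}(C,B_2)\to\mathrm{map}(C,T_2)$ over $fp_1\Phi$.

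The main obstacle is this step: checking that the fibre of the path-space model really is the honest fibre of $p_2\circ(\placeholder)$ up to weak equivalence. This requires the compatibility $\mathrm{map}(C,\Lambda[t,dt]\otimes T_2)\simeq\mathrm{map}(C,T_2)^{\Delta[1]}$ on the relevant fibres, which I would verify with Corollary \ref{cor: weak fundamental theorem} applied to $\ev_0$ and $\ev_1$.
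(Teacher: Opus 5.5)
Your proof is correct, but you rectify the non-commuting square in a different place than the paper does. The paper stays on the simplicial side. It reads $H$ as a $1$-simplex of $\mathrm{map}(B_1,T_2)$, which gives a simplicial homotopy between $p_2F\circ(-)$ and $fp_1\circ(-)$ on $\mathrm{map}(C,B_1)$. It then uses the covering homotopy property of the Kan fibration $p_2\circ(-)$ to deform $F\circ(-)$ into a map $G\colon \mathrm{map}(C,B_1)\to\mathrm{map}(C,B_2)$ that strictly covers $f\circ(-)$. Since $G\simeq F\circ(-)$, it is a weak equivalence, and the five lemma gives an equivalence onto the honest fibre of $p_2\circ(-)$ over $f\varphi$. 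Transport along the path $H\Phi$ then moves this to the fibre over $p_2F\Phi$.

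You instead rectify on the algebraic side with the mapping-path cdga $P$. This gives a strictly natural comparison map: composition with the explicit cdga map $(F,H)$, with no lifting choices. The cost is the extra identification in your Step 4. The paper's route never needs that step, because $G$ already lands in $\mathrm{map}(C,B_2)$.

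Step 4 is easier than you expect. It needs neither $\mathrm{map}(C,T_2)^{\Delta[1]}$ nor a separate transport.
\begin{itemize}
\item Put $y=fp_1\Phi$, and let $\mathcal{P}_y$ be the fibre of $\mathrm{ev}_1\circ(-)\colon \mathrm{map}(C,\Lambda[t,dt]\otimes T_2)\to\mathrm{map}(C,T_2)$ over $y$.
\item Since $\Omega(\Delta[n])\otimes(-)$ preserves fibre products, the fibre of $\mathrm{ev}_1\mathrm{pr}_2\circ(-)\colon\mathrm{map}(C,P)\to\mathrm{map}(C,T_2)$ over $y$ is $\mathrm{map}(C,B_2)\times_{\mathrm{map}(C,T_2)}\mathcal{P}_y$, formed using $\mathrm{ev}_0\circ(-)$ on $\mathcal{P}_y$.
\item Its projection to $\mathcal{P}_y$ is a base change of $p_2\circ(-)$, hence a Kan fibration.
\item $\mathcal{P}_y$ is contractible, because $\mathrm{ev}_1\circ(-)$ is a trivial Kan fibration by Corollary~\ref{cor: weak fundamental theorem}.
\item The fibre of this projection over the vertex $H\Phi\in\mathcal{P}_y$ is literally $\mathrm{map}^{T_2,p_2F\Phi}(C,B_2)$.
\end{itemize}
The long exact sequence therefore makes this fibre inclusion a weak equivalence. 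Inverting it up to homotopy (all objects are Kan) gives the asserted map.
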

\begin{proof}
    %Since the proof is a formal consequence of Theorem \ref{thm: fundamental theorem} and standard arguments in homotopy theory we will only sketch its.

    By definition of the algebraic mapping space, the homotopy $H$ defines $1$-simplex inside $\mathrm{map}(B_1,T_2)$, which can be represented by a simplicial map $H \colon \Delta[1] \rightarrow \mathrm{map}(B_1,T_2)$.
    Composition with $H$ yields a homotopy $\mathrm{H} \colon \Delta[1] \times \mathrm{map}(C,B_1) \rightarrow \mathrm{map}(C,T_2)$, which makes the following diagram of Kan fibrations commutative up to homotopy
    \begin{equation*}
        \xymatrix@R-.4em{ \mathrm{map}(C,B_1) \ar[rr]^{\Phi \circ (\placeholder)}\ar[d]_{p_1\circ (\placeholder)} && \mathrm{map}(C,B_2) \ar[d]^{p_2 \circ (\placeholder)}\\
        \mathrm{map}(C,T_1) \ar[rr]_{f\circ (\placeholder) } && \mathrm{map}(C,T_2). }
    \end{equation*}
    Since right the vertical is a Kan-fibration, we can lift the homotopy $\mathrm{H} \colon \Delta[1] \times \mathrm{map}(C,B_1) \rightarrow \mathrm{map}(C,T_2)$  %%% the homotopy is given by composition.
    to a homotopy $\mathcal{H} \colon  \Delta[1] \times \mathrm{map}(C,B_1) \rightarrow \mathrm{map}(C,T_2)$ extending $\Phi$ on $\{1\} \times \mathrm{map}(C,B_1)$).
    If $G$ denotes the restriction of $\mathcal{H}$ to $\{0\} \times \mathrm{map}(C,B_1)$, then $G$ is a map of Kan fibrations (over $f$), so it induces a map of fibres $G \colon \mathrm{map}^{T_1,\varphi}(C,B_1) \rightarrow \mathrm{map}^{T_2,p_2G\Phi}(C,T_2)$.

    Since $H$ is a path inside the Kan set $\mathrm{map}(C,T_2)$ between $p_2G\Phi=f\varphi$ and $p_2F\Phi$, it gives rise to a weak equivalence $\hat{H} \colon \mathrm{map}^{T_2,p_2G\Phi}(C,B_2) \rightarrow \mathrm{map}^{T_2,pF\Phi}(C,B_2)$.
    
    Since $F$ and $f$ are quasi-isomorphism, $F\circ (\placeholder)$ and $f\circ (\placeholder)$ are weak equivalences by Proposition \ref{prop: space Whitehead cdga}. 
    As $G$ is homotopic to $F\circ (\placeholder)$, it must be a weak equivalence.
    By the Five lemma, its restriction of $G \circ (\placeholder)$ to the fibre $\mathrm{map}^{T_1,\varphi}(C,B_1)$ is a weak equivalence, too.
    Thus, the composition of $\hat{H} \circ G \circ (\placeholder)\colon \mathrm{map}^{T_1,\varphi}(C,B_1) \rightarrow \mathrm{map}^{T_2,pF\Phi}(C,B_2)$ is a weak equivalence we are looking for.
\end{proof}
The natural bijection $\mathrm{Hom}_{dga}(B,\Omega(X)) \cong \mathrm{Hom}_{\mathsf{sSet}}(X,\langle B\rangle)$ of the 
 adjunction (\ref{eq: spatial de Rham adjunction}) can be used to define a map of simplicial sets
\begin{equation*}
    \mu \colon \mathrm{map}(B,\Omega(X)) \rightarrow \mathrm{map}(X, \langle B\rangle).
\end{equation*}
On the level of $n$-simplices, the natural map given by (external) multiplication
    \begin{equation*}
        \mu_n \colon \Omega(X) \otimes \Omega(\Delta[n]) \rightarrow \Omega( X \times \Delta[n] ), \qquad (x,y) \mapsto \Omega(\mathrm{pr}_1)(x) \cdot \Omega(\mathrm{pr}_2)(y),
    \end{equation*}
    which is, in fact, a quasi-isomorphism of cdgas by the Künneth formula.
    Together with the following adjunctions, the external product yield a sequence of (natural) maps
    \begin{align*}
        \mathrm{map}_n\bigl(B,\Omega(X)\bigr) ={}& \mathrm{Hom}_{dga}\bigl(B,\Omega(X) \otimes \Omega(\Delta[n]) \bigr) \xrightarrow{\mu_n \circ (\placeholder)} \mathrm{Hom}_{dga}\bigl( B, \Omega(X \times \Delta[n]) \bigr) \\
        \cong{}& \mathrm{Hom}_\mathsf{sSet}(X \times \Delta[n], \langle B \rangle)\\
        ={}& \mathrm{map}_n(X,\langle B \rangle),
    \end{align*}
    and naturality of the adjunction implies that these maps form a simplicial map $\mu$.
\begin{prop}
    If $C$ is a cofibrant cdga, then the natural map $\mu \colon \mathrm{map}(C,\Omega(X)) \rightarrow \mathrm{map}(X,\langle C \rangle)$ is a weak homotopy equivalence.
\end{prop}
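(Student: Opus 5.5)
The map $\mu$ factors as post-composition with the external multiplication followed by the adjunction bijection. The plan is to show separately that each factor is a weak equivalence. Concretely, for each $n$ I would introduce the simplicial set $\mathrm{map}(C,\Omega(X\times\Delta[\bullet]))$, whose $n$-simplices are $\mathrm{Hom}_{dga}(C,\Omega(X\times\Delta[n]))$ and whose structure maps are induced by $\id_X\times\phi$. Then $\mu$ is the composite
\begin{equation*}
    \mathrm{map}(C,\Omega(X)) \xrightarrow{\;\mu_\bullet\circ(\placeholder)\;} \mathrm{Hom}_{dga}\bigl(C,\Omega(X\times\Delta[\bullet])\bigr) \xrightarrow{\;\cong\;} \mathrm{map}(X,\langle C\rangle).
\end{equation*}
The second arrow is an isomorphism of simplicial sets by naturality of the adjunction (\ref{eq: spatial de Rham adjunction}). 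It therefore suffices to show that the first arrow is a weak equivalence.

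The first arrow is a levelwise map, but the two simplicial sets are not both of the form $\mathrm{map}(C,B)$, so Proposition \ref{prop: space Whitehead cdga} does not apply directly. I would handle this by a standard bisimplicial/diagonal argument. Consider the bisimplicial set
\begin{equation*}
    (m,n)\mapsto \mathrm{Hom}_{dga}\bigl(C,\Omega(\Delta[m])\otimes\Omega(X\times\Delta[n])\bigr),
\end{equation*}
which for fixed $n$ is $\mathrm{map}(C,\Omega(X\times\Delta[n]))$. Its two edge inclusions are the following.
\begin{itemize}
    \item[(a)] For fixed $n$, the projection $X\times\Delta[n]\to X$ induces a quasi-isomorphism $\Omega(X)\to\Omega(X\times\Delta[n])$, since $\Delta[n]$ is contractible. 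By Proposition \ref{prop: space Whitehead cdga} this gives a weak equivalence $\mathrm{map}(C,\Omega(X))\to\mathrm{map}(C,\Omega(X\times\Delta[n]))$. Hence the row $n\mapsto$ (constant in $n$) maps levelwise weakly equivalently into the bisimplicial set, and the diagonal lemma says the induced map on diagonals is a weak equivalence.
    \item[(b)] For fixed $m$, the simplicial set $n\mapsto \mathrm{Hom}_{dga}(C,\Omega(\Delta[m])\otimes\Omega(X\times\Delta[n]))$ receives the constant map from $m\mapsto\mathrm{Hom}_{dga}(C,\Omega(\Delta[m])\otimes\Omega(X))$. I would like to argue that this is a weak equivalence, i.e.\ that the $n$-direction is homotopically constant.
\end{itemize}

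Rather than pursuing (b) directly, I would run the cleaner alternative. Both $\mathrm{map}(C,\Omega(X))$ and $\mathrm{Hom}_{dga}(C,\Omega(X\times\Delta[\bullet]))$ map into the diagonal of the bisimplicial set $\mathrm{Hom}_{dga}(C,\Omega(X\times\Delta[m]\times\Delta[n]))$, and in each case the comparison is through levelwise quasi-isomorphisms $\Omega(\Delta[m])\otimes\Omega(X\times\Delta[n])\to\Omega(X\times\Delta[m]\times\Delta[n])$. These are quasi-isomorphisms by the Künneth formula, as already noted in the text. Proposition \ref{prop: space Whitehead cdga} turns them into levelwise weak equivalences of Kan simplicial sets, and the diagonal lemma promotes them to weak equivalences on diagonals. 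A two-out-of-three argument in the resulting commutative diagram then shows that $\mu_\bullet\circ(\placeholder)$ is a weak equivalence.

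The main obstacle is bookkeeping. I need to check that the diagram of diagonals actually commutes, in particular that the composite through the diagonal recovers $\mu$. I also need to check that the map $\mathrm{map}(C,\Omega(X))\to\mathrm{diag}$ is itself a weak equivalence. The latter uses that $\Omega(X)\to\Omega(X\times\Delta[n])$ is a quasi-isomorphism, and that $m\mapsto\mathrm{Hom}_{dga}(C,\Omega(\Delta[m])\otimes B)$ is invariant under quasi-isomorphisms $B\to B'$, which is again Proposition \ref{prop: space Whitehead cdga}.
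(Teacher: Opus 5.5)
Your reduction to the first arrow is correct, and so is step (a). With $D_{m,n}=\mathrm{Hom}_{dga}\bigl(C,\Omega(\Delta[m])\otimes\Omega(X\times\Delta[n])\bigr)$, fixing $n$ and applying Proposition~\ref{prop: space Whitehead cdga} in the $m$-direction does show that $\mathrm{map}(C,\Omega(X))\to\mathrm{diag}\,D$ is a weak equivalence. The gap is the next claim: that Proposition~\ref{prop: space Whitehead cdga} turns the levelwise quasi-isomorphisms $\Omega(\Delta[m])\otimes\Omega(X\times\Delta[n])\to\Omega(X\times\Delta[m]\times\Delta[n])$ into levelwise weak equivalences $D\to E$, where $E_{m,n}=\mathrm{Hom}_{dga}\bigl(C,\Omega(X\times\Delta[m]\times\Delta[n])\bigr)$. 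That proposition only treats $\mathrm{map}(C,B_1)\to\mathrm{map}(C,B_2)$ for a single quasi-isomorphism $B_1\to B_2$, and there the simplicial variable enters only through the factor $\Omega(\Delta[\bullet])\otimes({-})$. Fix $n$: the column $D_{\bullet,n}\to E_{\bullet,n}$ is $\mathrm{map}(C,\Omega(Y))\to\mathrm{Hom}_{dga}\bigl(C,\Omega(Y\times\Delta[\bullet])\bigr)$ with $Y=X\times\Delta[n]$. That is exactly the map you are trying to prove is a weak equivalence, so this direction is circular. Fix $m$: neither $D_{m,\bullet}$ nor $E_{m,\bullet}$ is of the form $\mathrm{map}(C,B)$, and the comparison is of the same kind as your abandoned step (b). Separately, the triangle through $\mathrm{diag}\,E$ does not commute on the nose, because the two routes pull back along the two different projections $\Delta[k]\times\Delta[k]\to\Delta[k]$. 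The whole content of the proposition is comparing the algebraic simplicial direction $\Omega(\Delta[\bullet])\otimes\Omega(X)$ with the geometric one $\Omega(X\times\Delta[\bullet])$. The bisimplicial set only relocates this problem, and the diagonal lemma cannot supply it.

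Genuinely new input is needed, and the paper supplies it on $\pi_0$. Using the description of $\pi_n$ by relative maps out of $S^n$, $\pi_n(\mu)$ factors as $\pi_0\bigl(\mathrm{map}^{\varphi}(C,\Omega(S^n)\otimes\Omega(X))\bigr)\to\pi_0\bigl(\mathrm{map}^{\varphi}(C,\Omega(S^n\times X))\bigr)\to\pi_0\bigl(\mathrm{map}_{X,\varphi}(S^n\times X,\langle C\rangle)\bigr)$. The first map is a bijection by quasi-isomorphism invariance (Theorem~\ref{thm: fundamental theorem}). The two sides of the second map have the same vertices, so only injectivity is at stake. Here the key observation is that a geometric homotopy is a single dga-homomorphism $C\to\Omega(\Delta[1]\times S^n\times X)$, i.e. a \emph{vertex} of an algebraic mapping space. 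Since $\sigma_0$, $\partial^0$, $\partial^1$ are quasi-isomorphisms with $\partial^j\sigma_0=\mathrm{id}$, quasi-isomorphism invariance forces $\pi_0(\partial^0)=\pi_0(\sigma_0)^{-1}=\pi_0(\partial^1)$, so the two ends are algebraically homotopic. If you prefer to keep a levelwise argument, the missing lemma is Reedy-theoretic. $\Omega$ sends injections such as $\partial\Delta[n]\hookrightarrow\Delta[n]$ and $X\times\partial\Delta[n]\hookrightarrow X\times\Delta[n]$ to surjections, so both $\Omega(\Delta[\bullet])\otimes\Omega(X)$ and $\Omega(X\times\Delta[\bullet])$ are Reedy fibrant simplicial cdgas. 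For cofibrant $C$, the functor $\mathrm{Hom}_{dga}(C,{-})$ sends Reedy acyclic fibrations to acyclic Kan fibrations. By Ken Brown's lemma it therefore sends levelwise quasi-isomorphisms between Reedy fibrant objects to weak equivalences. Applied to the external product, this proves the claim directly, with no diagonals needed.
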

\begin{proof}
    Using the combinatorial description of homotopy groups for Kan sets, we see that $\pi_n(\mu)$ decomposes as follows:
    \begin{align*}
        \pi_n( \mathrm{map}(C,\Omega(X)),\varphi) = \pi_0\bigl( \mathrm{map}^{\Omega(X),\varphi}( C, \Omega(S^n) \otimes \Omega(X) ) \bigr) &\rightarrow \pi_0\bigl(\mathrm{map}^{\Omega(X),\varphi}(C,\Omega(S^n \times X)\bigr)\\
        &\rightarrow \pi_0( \mathrm{map}_{X,\varphi}\bigl( S^n \times X, \langle C\rangle ) \bigr) \\
        &= \pi_n( \mathrm{map}\bigl( X,\langle C \rangle ),\varphi \bigr).
    \end{align*}
    Since the (external) tensor product $\Omega(S^n)\otimes \Omega(X) \rightarrow \Omega(S^n \times X)$ is a quasi-isomorphism, the first map in the composition is an isomorphism by Theorem \ref{thm: fundamental theorem}.
    
    While the two simplicial sets $\mathrm{map}^{\Omega(X),\varphi}\bigl(C,\Omega(S^n \times X)\bigr)$ and $\mathrm{map}_{X,\varphi}\bigl( S^n \times X, \langle C\rangle\bigr)$ have the same $0$-simplices, the latter has more $1$-simplices.
    Hence, we need to show that the second map is injective, which we accomplish by 
    %In other words that if there is a `simplicial homotopy' $H \in \mathrm{Hom}\bigl(C,\Omega(\Delta[1] \times S^n \times X)\bigr)$ that relates $\varphi$ and $\psi$, then there is also an `algebraic' homotopy $H' \in \mathrm{Hom}_{\mathsf{sSet}}( C, \Omega(\Delta[1]) \otimes \Omega(S^n \times X) )$ relating them.
    generalising the proof of Lemma 8.43 in \cite{berglund2012RHT} to the relative set-up.
    Let $H \colon \Delta[1] \times S^n \times X \rightarrow \langle C \rangle$ be a homotopy between $\psi_0$ and $\psi_1$ that restricts to $\varphi \circ \mathrm{pr}_X$ on $\Delta[1] \times \ast \times X$.
    Under the adjunction of $\langle \placeholder \rangle$ and $\Omega(\placeholder)$, the homotopy $H$ corresponds to a dga-homomorphism $\mathrm{Ad}(H) \colon C \rightarrow \Omega(\Delta[1] \times S^n \times X)$ whose composition with the face maps $\partial^0$ and $\partial^1$ yields $\psi_0$ and $\psi_1$, respectively.

    Since face maps $\partial^j \colon \Omega(\Delta[1] \times S^n \times X) \rightarrow \Omega(S^n \times X)$ and the degeneracy map $\sigma_0 \colon \Omega(S^n \times X) \rightarrow \Omega(\Delta[1] \times S^n \times X)$ are quasi-isomorphisms, we obtain a zig-zag of weak equivalences
    \begin{equation*}
        \xymatrix@C-0.7em{ \mathrm{map}^{\Omega(X),\varphi}(C,S^n \times X) \ar@<-0.3pc>[r]_-{\sigma_0} & \mathrm{map}^{\Omega(\Delta[1] \times X),\varphi \circ \mathrm{pr}_X}\bigl(C,\Omega(\Delta[1] \times S^n \times X)\bigr) \ar@<0.3pc>[r]^-{\partial^1} \ar@<-0.3pc>[l]_-{\partial^0} & \mathrm{map}^{\Omega(X),\varphi}(C,S^n \times X) \ar@<0.3pc>[l]^-{\sigma_0}. }
    \end{equation*}
    Since $\partial^j\sigma_0 = \id$ and $\partial^j$ are weak equivalences, we deduce that $\pi_0(\sigma_0) $ is the inverse of $ \pi_0(\partial^j)$.
    Since $\mathrm{Ad}(H)$ is a $0$-simplex in $\mathrm{map}^{\Omega(\Delta[1] \times X),\varphi \circ \mathrm{pr}_X}\bigl(C,\Omega(\Delta[1] \times S^n \times X)\bigr)$, we deduce that $[\mathrm{Ad}(H)] = \pi_0(\sigma_0)([\psi_0])$ and hence that
    \begin{align*}
        [\psi_0] &= \pi_0(\partial^0)([\mathrm{Ad}(H)]) = \pi_0(\partial^0)\bigl( \pi_0(\sigma_0)([\psi_0]) \bigr) = \pi_0(\partial^1)\bigl( \pi_0(\sigma_0)([\psi_0]) \bigr) \\
        &= \pi_0(\partial^1)([\mathrm{Ad}(H)]) = [\psi_1] \in \pi_0(\mathrm{map}^{\Omega(X),\varphi}(C,S^n \times X)),
    \end{align*}
    so injectivity is proved.
\end{proof}
Because $\mu$ is natural, we immediately conclude the following consequnce.
\begin{cor}
    If $A \hookrightarrow X$ is an inclusion of simplicial sets and $\varphi \colon C \rightarrow \Omega(A)$ a fixed dga-homomorphism with $C$ a cofibrant cdga, then the natural map $\mu$ induces a weak equivalence
    \begin{equation*}
        \mathrm{map}^{\Omega(A),\varphi}(C,\Omega(X)) \rightarrow \mathrm{map}_{A,\varphi}(X,\langle C\rangle).
    \end{equation*}
\end{cor}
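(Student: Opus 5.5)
The plan is to deduce the corollary from the preceding proposition by comparing two Kan fibrations and passing to fibres. By naturality of $\mu$ in the simplicial set variable, the inclusion $\iota\colon A \hookrightarrow X$ gives a commutative square
\begin{equation*}
    \xymatrix@R-.4em{ \mathrm{map}(C,\Omega(X)) \ar[rr]^-{\mu_X} \ar[d]_{\Omega(\iota)\circ(\placeholder)} && \mathrm{map}(X,\langle C\rangle) \ar[d]^{(\placeholder)\circ\iota} \\
    \mathrm{map}(C,\Omega(A)) \ar[rr]_-{\mu_A} && \mathrm{map}(A,\langle C\rangle). }
\end{equation*}
Commutativity should follow from naturality of the external product $\mu_n$ in $X$ and of the adjunction (\ref{eq: spatial de Rham adjunction}), checked on $n$-simplices. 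In particular, $\mu_A$ sends the vertex $\varphi$ to its adjoint $\varphi\colon A\to\langle C\rangle$. The fibres over these vertices are exactly $\mathrm{map}^{\Omega(A),\varphi}(C,\Omega(X))$ and $\mathrm{map}_{A,\varphi}(X,\langle C\rangle)$, and the induced map between them is the map in the statement.

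Next I will check that both vertical maps are Kan fibrations.
\begin{itemize}
    \item[(a)] For the left map I need $\Omega(\iota)\colon\Omega(X)\to\Omega(A)$ to be surjective. This holds because $\Omega_{PL}$ sends injections of simplicial sets to surjections, since the simplicial cdga $\Omega(\Delta[\bullet])$ is extendable \cite{Bousfield1976PLdeRham}. Corollary \ref{cor: weak fundamental theorem} then applies, since $C$ is cofibrant.
    \item[(b)] For the right map, $\iota$ is a cofibration of simplicial sets and $\langle C\rangle$ is Kan by the corollary above. Restriction along a cofibration into a Kan target is a Kan fibration (standard simplicial homotopy theory, \cite{goerss2009simplicial}).
\end{itemize}
By the preceding proposition, applied once to $X$ and once to $A$, both horizontal maps are weak equivalences.

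It remains to show that a map of Kan fibrations which is a weak equivalence on total spaces and on bases induces a weak equivalence on the fibres over corresponding vertices. This step needs the most care, because a naive five lemma on the long exact sequences is delicate at $\pi_0$ and $\pi_1$, where the sets and groups may be nonabelian and the base spaces need not be connected. I would avoid this by observing that over Kan bases, the strict fibre of a Kan fibration over a vertex is a model for the homotopy fibre. Since $\mathsf{sSet}$ is right proper, the comparison map of homotopy pullbacks of $\ast\to B\leftarrow E$ is a weak equivalence whenever the three comparison maps are. Alternatively, one can use the five lemma with the usual extension to low degrees, via the action of $\pi_1$ of the base on $\pi_0$ of the fibre, applied at every basepoint of the fibre. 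Either route shows that the restriction of $\mu$ to the fibres over $\varphi$ is a weak homotopy equivalence, which proves the corollary.
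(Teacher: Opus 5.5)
Your proposal is correct and follows the same route as the paper, which derives the corollary in one line from the naturality of $\mu$ together with the preceding proposition. You have made explicit what the paper leaves implicit: both restriction maps are Kan fibrations (via surjectivity of $\Omega(\iota)$ and Corollary~\ref{cor: weak fundamental theorem} on one side, and $\langle C\rangle$ being Kan on the other), and a map of Kan fibrations that is a weak equivalence on total spaces and bases restricts to a weak equivalence on the fibres over corresponding vertices.
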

Now, let $K$ be a connected, nilpotent Kan set of finite $\Q$-type and $\Lambda V_K \rightarrow \Omega(K)$ a Sullivan model for $\Omega(K)$. 
Let further $\mathsf{A}(\iota) \colon \mathsf{A}(X) \twoheadrightarrow \mathsf{A}(A)$ a surjective model for the inclusion $\iota \colon A \hookrightarrow X$.
Let $\eta \colon K \rightarrow \langle\Lambda V_K\rangle$ be a lift of the unit map $K \rightarrow \langle \Omega(K) \rangle$, which is unique up to homotopy and induces an isomorphism on rational homotopy groups, see \cite[Theorem 11.2]{Bousfield1976PLdeRham}.
For $f \colon A \rightarrow K$, denote the composition $\eta \circ f$ with $\varphi$, which can be also interpreted as a dga-homomorphism $\varphi \colon \Lambda V_K \rightarrow \Omega(A)$.
Let further $\psi \colon \Lambda V_K \rightarrow \mathsf{A}(A)$ be a lift of $\varphi$ under the model map $\mathsf{A}(A) \rightarrow \Omega(A)$, which exists and is unique up to homotopy by Proposition \ref{prop: space Whitehead cdga}.
Then, by Lemma \ref{lem: quasi-iso invariant fibres} there is a zig-zag of weak homotopy equivalences
\begin{align}\label{eq: zig-zag of mapping spaces}
    \mathrm{map}_f(X,K) \xrightarrow{\eta \circ  (\placeholder)} \mathrm{map}_\varphi(X,\langle\Lambda V_K\rangle) \xleftarrow{\mu} \mathrm{map}^\varphi(\Lambda V_K,\Omega(X)) \leftarrow \mathrm{map}^\psi(\Lambda V_K,\mathsf{A}(X)).
\end{align}

Note furthermore that there are cdga-homomorphisms $(H(S^n),0) \rightarrow \Omega(S^n)$ that induces the identity on cohomology, and all of them are homotopic to each other.
Together with the combinatorial homotopy group description of Kan sets, we can prove the next result.
%\textcolor{red}{Something with the addition is still fishy here.}
\begin{cor}\label{cor: Corollary of relative adjunction}
    Let $f$, $\varphi$, and $\psi$ be as above. 
    Let $F \colon X \rightarrow K$ an extension of $f$ and $\Psi \colon \Lambda V_K \rightarrow \mathsf{A}(X)$ be a model for $\eta \circ F$.
    Then the zig-zag (\ref{eq: zig-zag of mapping spaces}) together with each quasi-isomorphism $(H(S^n),0) \rightarrow \Omega(S^n)$ induces a homomorphism
    \begin{equation*}
        \pi_n( \mathrm{map}_f(X,K),F) \rightarrow \pi_n( \mathrm{map}^\psi(\Lambda V_K, \mathsf{A}(X)), \Psi ) \cong \pi_0( \mathrm{map}^{(\id\otimes \psi, 1\otimes \Psi))} (\Lambda V_K, H(S^n) \otimes \mathsf{A}(X)) ),  
    \end{equation*}
    with the dga-homomorphism $(\id \otimes \psi,1\otimes \Psi) \colon \Lambda V_K \rightarrow \bigl( H(S^n) \otimes \mathsf{A}(A) \bigr) \oplus_{\bbK \otimes \mathsf{A}(A)} \bbK \otimes \mathsf{A}(X)$.
    %\textcolor{blue}{Moreover, the homomorphism is natural in $Y$.}
\end{cor}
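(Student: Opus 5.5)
The plan is to reduce the homotopy group of the relative mapping space to a $\pi_0$ of a relative mapping space into a sphere-tensored algebra, and then transport everything along the zig-zag (\ref{eq: zig-zag of mapping spaces}).

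First I use the combinatorial description of homotopy groups of Kan sets. An element of $\pi_n(\mathrm{map}_f(X,K),F)$ is represented by a map $S^n\times X\to K$. This map restricts to $F\circ\mathrm{pr}_X$ on $\ast\times X$ and to $f\circ\mathrm{pr}_A$ on $S^n\times A$. Hence
\begin{equation*}
\pi_n(\mathrm{map}_f(X,K),F)\cong \pi_0\bigl(\mathrm{map}_{(S^n\times A)\cup(\ast\times X)}(S^n\times X,K)\bigr),
\end{equation*}
with the boundary condition prescribed as above. The same identification holds on the algebraic side: $\pi_n(\mathrm{map}^\psi(\Lambda V_K,\mathsf{A}(X)),\Psi)$ is $\pi_0$ of the relative mapping set $\mathrm{map}(\Lambda V_K,\Omega(S^n)\otimes\mathsf{A}(X))$. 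Here the target is taken relative to the surjection onto the fibre product
\begin{equation*}
\bigl(\Omega(S^n)\otimes\mathsf{A}(A)\bigr)\oplus_{\mathsf{A}(A)}\mathsf{A}(X).
\end{equation*}
This fibre product is a model of $(S^n\times A)\cup(\ast\times X)$ by Example \ref{exmpl: pushout model}, and the map onto it is surjective because $\mathsf{A}(\iota)$ is surjective and evaluation at the basepoint splits.

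Second, I apply the zig-zag (\ref{eq: zig-zag of mapping spaces}) to the pair $\bigl(S^n\times X,(S^n\times A)\cup(\ast\times X)\bigr)$ rather than to $(X,A)$. The map $\eta\circ(\placeholder)$ and $\mu$ are natural weak equivalences, and the preceding corollary gives the relative version of $\mu$. Lemma \ref{lem: quasi-iso invariant fibres} lets me replace $\Omega(S^n\times X)$ and the model of the subspace by $\Omega(S^n)\otimes\mathsf{A}(X)$ and the fibre product above. The required square commutes up to homotopy via the Künneth quasi-isomorphism $\mu_n$ and the model map $\mathsf{A}(X)\to\Omega(X)$. This yields the first map to $\pi_n(\mathrm{map}^\psi(\Lambda V_K,\mathsf{A}(X)),\Psi)$.

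Third, I fix a quasi-isomorphism $(H(S^n),0)\to\Omega(S^n)$. Tensoring with $\mathsf{A}(X)$ and $\mathsf{A}(A)$ gives a quasi-isomorphism of the surjections
\begin{equation*}
H(S^n)\otimes\mathsf{A}(X)\twoheadrightarrow \bigl(H(S^n)\otimes\mathsf{A}(A)\bigr)\oplus_{\bbK\otimes\mathsf{A}(A)}\bbK\otimes\mathsf{A}(X)
\end{equation*}
into the corresponding $\Omega(S^n)$-version. This square commutes strictly, and the induced map on fibre products is a quasi-isomorphism by the five lemma applied to the short exact sequences defining them. Lemma \ref{lem: quasi-iso invariant fibres} then gives the claimed isomorphism on $\pi_0$. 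The choice of $H(S^n)\to\Omega(S^n)$ is irrelevant up to homotopy.

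The main obstacle is showing that the composite is a group homomorphism. On the topological side the group structure comes from pinching $S^n\to S^n\vee S^n$. On the algebraic side it should correspond to the coproduct-like map
\begin{equation*}
H(S^n)\to H(S^n\vee S^n)=H(S^n)\oplus_\bbK H(S^n).
\end{equation*}
I would check compatibility by running the whole argument once more for $S^n\vee S^n$, using naturality of every map in the zig-zag in the sphere variable. Then the pinch and fold maps are intertwined, so addition is preserved on $\pi_0$.
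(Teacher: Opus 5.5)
Your construction of the map and of the bijection uses the same ingredients as the paper's sketch: the combinatorial description of $\pi_n$ of Kan sets, the zig-zag (\ref{eq: zig-zag of mapping spaces}), Lemma \ref{lem: quasi-iso invariant fibres}, and a quasi-isomorphism $H(S^n)\to\Omega(S^n)$. Your extra checks are correct: the surjectivity onto the fibre product, and the five-lemma argument for the induced map of fibre products. One small point: in your second step you need an honest dga map between fibre products, and that requires the square formed by $\mathsf{A}(\iota)$, $\Omega(\iota)$ and the model maps to commute strictly, not just up to homotopy. It is cleaner to do the Künneth step, which is natural and strict, with $\Omega(X)$. Then use the last arrow of (\ref{eq: zig-zag of mapping spaces}) for the pair $(X,A)$ itself.

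The real problem is your last paragraph. The homomorphism property is not an obstacle, and the argument you propose for it does not work as written. Every object in (\ref{eq: zig-zag of mapping spaces}) is a Kan complex, since the relative mapping spaces are fibres of Kan fibrations. Every arrow is a simplicial map, and the two backward arrows ($\mu$ and the one from Lemma \ref{lem: quasi-iso invariant fibres}) are weak equivalences. So applying $\pi_n$ and inverting the backward isomorphisms already gives a group homomorphism $\pi_n(\mathrm{map}_f(X,K),F)\to\pi_n(\mathrm{map}^\psi(\Lambda V_K,\mathsf{A}(X)),\Psi)$ for every $n\geq 1$. Your $\pi_0$-level construction over $S^n\times X$ agrees with this map, by naturality in the target of the identification of $\pi_n$ of a mapping space with relative homotopy classes of maps out of $S^n\times X$. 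That is all you needed to say.

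The second identification $\cong\pi_0(\cdots)$ is only a bijection. That $\pi_0$ has no intrinsic group structure, and the paper explicitly equips it with the one transported from $\pi_n$.

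Your pinch-map plan has three concrete defects:
\begin{enumerate}
\item The simplicial sphere $\Delta[n]/\partial\Delta[n]$ admits no simplicial pinch map. It has a single non-degenerate $n$-simplex, so every simplicial map to $S^n\vee S^n$ lands in one wedge summand.
\item The group structure on $\pi_n$ of the Kan complex $\mathrm{map}^\psi(\Lambda V_K,\mathsf{A}(X))$ is defined combinatorially via horn fillers, not by any map on $H(S^n)$.
\item The map $H(S^n)\to H(S^n\vee S^n)$ you write down is induced by the fold map. The pinch induces the sum map $H(S^n)\oplus_{\bbK}H(S^n)\to H(S^n)$.
\end{enumerate}
A pinch argument of this kind does appear in the paper, but later, in the proof of Theorem \ref{thm: hAut via derivations (general)}. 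There it serves a different claim: that the transported group structure corresponds to addition of the derivations $\theta^n_F$.
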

We would like to remark that $\pi_0( \mathrm{map}^{(\id\otimes \psi, 1\otimes \Psi))} (\Lambda V_K, H(S^n) \otimes \mathsf{A}(X)) )$ does not carry a group structure a priori, but it is equipped with the group structure from $\pi_n( \mathrm{map}^\psi(\Lambda V_K, \mathsf{A}(X)), \Psi )$ under the bijection of Corollary \ref{cor: Corollary of relative adjunction}.
It is this groups we would like to understand in terms of derivation.

To this end, for each cdga $B$, we denote by $sB$ the \emph{shifted} chain complex with $(sB)^n = B^{n+1}$ and the `same' differential, i,e., $d^{sB} = d$.
The cdga $H(S^n) \otimes B$ decomposes additively into $H^0(S^n) \otimes B \oplus H^n(S^n) \otimes s^nB$.
Therefore, each grading-preserving linear map $F \colon C \rightarrow H(S^n) \otimes B$ with a chain complex $C$ as domain decomposes uniquely as follows:
\begin{equation}\label{eq: dec dga into derivation}
    F = 1 \otimes \theta^0_F + \mathrm{vol}_{S^n} \otimes \theta^n_F.
\end{equation}
An elementary calculation shows that $f \colon C \rightarrow H(S^n) \otimes B$ is a homomorphism of graded algebras if and only if $\theta^0_f$ is a cdga-homomorphism and $\theta^n_f$ is a $\theta^0_f$-derivation in the following sense.
\begin{definition}
    Let $\Phi \colon C \rightarrow B$ be a homomorphism of graded algebras and let $I \subseteq B$ be a differential ideal\footnote{We allow $I=B$ to be the entire algebra.}.
    A \emph{$\Phi$-derivation} of degree $n$ is a linear map $\theta \colon C \rightarrow I$ lowering the degree by $n$ and satisfying
    \begin{equation*}
        \theta(xy) = \theta(x)\Phi(y) + (-1)^{n|x|}\Phi(x)\theta(y)
    \end{equation*}
    Denote by $\Der^\Phi_n(C,I)$ the set of all $\Phi$-derivations of degree $n$.
    Together, they form the graded vector space
    \begin{equation*}
        \mathrm{Der}^\Phi(C,I) = \bigoplus_{n \in \mathbb{Z}} \mathrm{Der}_n^\Phi(C,I).
    \end{equation*}
    If $\Phi$ is a dga-homomorphism between differential graded algebras $C$ and $B$, then there is a differential $\delta \colon \mathrm{Der}^\Phi_n(C,I) \rightarrow \mathrm{Der}^\Phi_{n-1}(C,I)$ given by $\delta(\theta) = d\theta - (-1)^n \theta d$.
\end{definition}
\begin{lemma}\label{lem: homotopy group vs derivations}
    Let $C$ be a cofibrant cdga, $\Phi \colon C \rightarrow B$ a dga-homomorphism, $p \colon B \twoheadrightarrow T$ a surjective dga-homomorphism, and set $\varphi = p \circ \Phi$.
    Then, for all $n \geq 1$, there is a natural bijection
    \begin{equation}\label{eq: homomorphism - derivation decomposition}
        \pi_0\bigl( \mathrm{map}^{(\id\otimes\varphi,1\otimes\Phi)}(C,H(S^n) \otimes B) \bigr) \xrightarrow{\cong} H_n(\mathrm{Der}^\Phi\bigl( C, \ker \, p ),\delta \bigr); \qquad [F] \mapsto [\theta_F^n]
    \end{equation}
    %\textcolor{blue}{Moreover, if $n\geq 2$, then the map is a group homomorphism}. 
\end{lemma}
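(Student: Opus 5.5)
Using the decomposition (\ref{eq: dec dga into derivation}), I first identify the $0$-simplices of the relative mapping set with derivations. A $0$-simplex is a dga-homomorphism $F \colon C \rightarrow H(S^n)\otimes B$ such that $(\id\otimes p)\circ F = (\id\otimes\varphi, 1\otimes \Phi)$ componentwise. Write $F = 1\otimes\theta^0_F + \mathrm{vol}_{S^n}\otimes\theta^n_F$. The constraint forces $\theta^0_F = \Phi$ and $p\circ\theta^n_F = 0$, so $\theta^n_F$ takes values in $\ker p$. Multiplicativity of $F$ is equivalent to $\theta^n_F$ being a $\Phi$-derivation; here one uses $\mathrm{vol}^2=0$. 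Compatibility with differentials is equivalent to $\delta\theta^n_F = 0$, once the sign conventions for $\mathrm{vol}_{S^n}\otimes -$ are fixed. Hence $0$-simplices correspond bijectively to $n$-cycles in $\Der^\Phi(C,\ker p)$, and the map $[F]\mapsto[\theta^n_F]$ is at least defined on representatives.

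Next I show that homotopic $F_0,F_1$ give homologous cycles. A $1$-simplex is a dga-map $G \colon C \rightarrow \Lambda[t,dt]\otimes H(S^n)\otimes B$ lying over the constant map into $\Lambda[t,dt]\otimes H(S^n)\otimes T$. The same reasoning shows that its $H^0$-part is constant: it is $\Phi$ composed with the inclusion, and if necessary I first use the Kan fibration of Corollary \ref{cor: weak fundamental theorem} to arrange this. Its $\mathrm{vol}$-part is then a $\Phi$-derivation $\Theta \colon C \rightarrow \Lambda[t,dt]\otimes\ker p$ with $\delta\Theta = 0$. Write $\Theta = \Theta_0(t) + dt\,\Theta_1(t)$ and integrate $\Theta_1$ over $[0,1]$; the integral $\int_0^1$ is $\Phi$-linear, so it gives a derivation $\eta$ of degree $n+1$. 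The Stokes-type identity $\theta^n_{F_1}-\theta^n_{F_0} = \pm\delta\eta$ then gives well-definedness.

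Surjectivity is immediate from the first step. For injectivity, suppose $\theta^n_{F_1}-\theta^n_{F_0} = \delta\eta$. I define $G = 1\otimes\Phi + \mathrm{vol}\otimes\bigl(\theta^n_{F_0} + t(\theta^n_{F_1}-\theta^n_{F_0}) \pm dt\,\eta\bigr)$. Multiplicativity holds because the $\mathrm{vol}$-part is again a $\Phi$-derivation, with values in $\Lambda[t,dt]\otimes \ker p$. The differential condition holds by the choice of sign. The restriction to $T$ is the constant map because $\eta$ lands in $\ker p$. This $G$ is a $1$-simplex connecting $F_0$ and $F_1$.

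The main obstacle is the $1$-simplex step in the well-definedness argument. There are two issues. First, the $H^0$-component of a general homotopy $G$ need not be constant; it is only a self-homotopy of $\Phi$ lying over the constant one on $T$. I would handle this by replacing $G$, via the lifting property from Theorem \ref{thm: fundamental theorem} applied to the cofibrant $C$, with a homotopy whose $H^0$-part is constant. Alternatively, one could reduce to the pullback description and argue that $\pi_0$ is computed by homotopies fixing $\theta^0$. Second, the signs in $\delta$ versus the Koszul signs of $\mathrm{vol}_{S^n}$ and $dt$ must be checked; this is routine. Naturality in $(C,\Phi,p)$ is clear from the construction.
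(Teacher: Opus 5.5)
Your proposal is correct and follows the paper's route: you split off the $\mathrm{vol}_{S^n}$-component, identify $0$-simplices with $\delta$-cycles in $\mathrm{Der}^\Phi(C,\ker p)$, and identify $1$-simplices with boundaries via their $dt$-component. Your integration of $\Theta_1$ over $[0,1]$ is more careful than the paper, which writes the homotopy in a form affine in $t$ and reads off $h^n$ directly, whereas your argument covers arbitrary polynomial $1$-simplices. The obstacle you flag about a non-constant $H^0$-part does not arise. The fibre condition is over $H(S^n)\otimes T\oplus_{\mathbb{K}\otimes T}\mathbb{K}\otimes B$, not merely over $\Lambda[t,dt]\otimes H(S^n)\otimes T$, and this forces the $H^0(S^n)$-component of every simplex to be the constant $1\otimes\Phi$ (the paper's observation (i)). So no lifting argument is needed.
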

\begin{proof}
    An explicit computation shows that each graded linear map $F \colon C \rightarrow H(S^n) \otimes B$, which decomposes uniquely into $1 \otimes \theta^0_F + \vol_{S^n} \otimes \theta^n_F$, is an algebra homomorphism if and only if $\theta^0_F$ is a graded algebra homomorphism and $\theta^n_F$ is a $\theta^0_F$-derivation.

    Moreover, the graded linear map $F$ commutes with the differentials if and only if $\theta^0_F$ commutes with the differential and $\theta^n_F$ is $\delta$-closed, that is $\delta(\theta^n_F)=0$.

    By definition, $F \in \mathrm{map}_0^{(\id\otimes\varphi,1\otimes\Phi)}(C,H(S^n) \otimes B) \bigr)$ is a zero-simplex in this simplicial set, if and only if it is a cdga-homomorphism that makes the following diagram commute
    \begin{equation*}
        \xymatrix{ C \ar[rrr] \ar[rrrd]_{(\id \otimes \varphi,1\otimes \Phi)\qquad }  &&& H(S^n)\otimes B \ar[d]^{(\id \otimes p, \varepsilon \otimes \id)} \\
        &&& H(S^n)\otimes T \oplus_{\bbK \otimes T} \bbK \otimes B,}
    \end{equation*}
    where $\varepsilon \colon H(S^n) \rightarrow H^0(S^n) = \bbK$ is the augmentation.
    Since $\varepsilon(\vol_{S^n}) = 0$, we deduce that
    \begin{equation*}
        \theta^0_F = \Phi \qquad \text{ and } \qquad p \circ \theta^n_F = 0.
    \end{equation*}

    To show that the map in the statement is well defined and bijective, we will prove that homotopies are, in fact, in one-to-one correspondence with $\delta$-coboundaries.
    Abbreviate the cdga $\Lambda [ t,dt \, | \, d(t) = dt]$ to $I$.
    A homotopy $H$ between $F_0$ and $F_1$ over $T$ is a dga-homomorphism $H$ making the following diagram commute:
    \begin{equation*}
        \xymatrix{ C \ar[rr]^H \ar[rrd]_{ (1\otimes 1 \otimes \varphi, 1 \otimes 1 \otimes \Phi) \qquad \qquad} && H(S^n) \otimes I \otimes B \ar[d] \ar@<0.5ex>[rr]^{\mathrm{ev}_0} \ar@<-0.5ex>[rr]_{\mathrm{ev}_1} && H(S^n) \otimes B \ar[d] \\
        && H(S^n)\otimes (I \otimes T) \oplus_{\bbK \otimes I \otimes T} \bbK \otimes I \otimes B \ar@<0.5ex>[rr]^{\mathrm{ev}_0} \ar@<-0.5ex>[rr]_{\mathrm{ev}_1} &&  H(S^n)\otimes T \oplus_{\bbK \otimes T} \bbK \otimes B.  }
    \end{equation*}
    The homotopy can be uniquely expressed as 
    \begin{equation}\label{eq: Homotopy Decomposition App}
        \begin{split}
            H(v) =& 1 \otimes 1 \otimes \Phi(v) + 1 \otimes t \otimes h^0(v) +   \vol_{S^n} \otimes 1 \otimes \theta_{F_0}^n(v) \\
            &+ \vol_{S^n} \otimes t \otimes (\theta_{F_1}^n - \theta_{F_0}^n)(v) + \vol_{S^n} \otimes dt \otimes h^n(v).
        \end{split}  
    \end{equation}
    Arguing as before, commutativity of this diagram implies that 
    \begin{itemize}
        \item[(i)] $h^0(v) = 0$, which means that $H$ is a pointed homotopy,
        \item[(ii)] $ph^n(v) = 0$.
    \end{itemize}
    From 
    \begin{align*}
        \begin{split}
            dH(v) =& d\Bigl(1 \otimes 1 \otimes \Phi(v) + 1 \otimes t \otimes h^0(v) +   \vol_{S^n} \otimes 1 \otimes \theta_{F_0}^n(v) \\
            &+ \vol_{S^n} \otimes t \otimes (\theta_{F_1}^n - \theta_{F_0}^n)(v) + \vol_{S^n} \otimes dt \otimes h^n(v)\Bigr).
        \end{split} \\
        \begin{split}
            =& 1 \otimes 1 \otimes d\Phi(v) + (-1)^n\vol_{S^n} \otimes 1 \otimes d\theta_{F_0}^n(v) + (-1)^n\vol_{S^n} \otimes dt \otimes (\theta_{F_1}^n - \theta_{F_0}^n)(v)\\
            &+ (-1)^{n}\vol_{S^n} \otimes t \otimes d(\theta_{F_1}^n - \theta_{F_0}^n)(v) + (-1)^{n+1} \vol_{S^n} \otimes dt \otimes dh^n(v) 
        \end{split}
    \end{align*}
    and 
    \begin{align*}
        \begin{split}
        H(dv) =& 1 \otimes 1 \otimes \Phi(dv) + \vol_{S^n} \otimes 1 \otimes \theta^n_{F_0}(dv) + \vol_{S^n} \otimes t \otimes \bigl( \theta_{F_1} - \theta_{F_0} \bigr)(dv) \\
        &+ \vol_{S^n} \otimes dt \otimes  h^n(dv),
        \end{split}
    \end{align*}
    together with the fact that $\theta_{F_j}^n$ are derivations of degree $d$, we conclude that $H$ is a chain map if and only 
    \begin{equation*}
         (\theta_{F_1}^n - \theta_{F_0}^n)(v) = (-1)^n \bigl( h^n(dv) - (-1)^{n+1} dh^n(v) \bigr) = (-1) \bigl( dh^n(v) - (-1)^{n+1} h^n(dv) \bigr).   
    \end{equation*}
    From the observation above, we see that $v \mapsto dt \otimes h^n(v)$ is a derivation of degree $n$, and a straighforward calculation reveals that $v \mapsto h^n(v)$ must be therefore a derivation of degree $n+1$.
    Thus, $\Theta :=  h^n$ is a derivation of degree $n+1$ yielding a boundary between $\theta_{F_0}$ and $\theta_{F_1}$.

    Conversely, a given boundary $\Theta \in \Der_{n+1}^{\Phi}(C,\ker p)$ between $\theta^n_{F_0}$ and $\theta^n_{F_1}$ gives rise to a homotopy over $T$ by plugging in  $\Theta$ for $h^n$ in equation (\ref{eq: Homotopy Decomposition App}).
\end{proof}
With all the ingredients at hand, we can now formulate and proof the main theorem of this section.
%
% I will formulate after I have introduced the notaion.
\begin{theorem}\label{thm: hAut via derivations (general)}
    Let $\iota\colon A\subseteq X$ be the inclusion of a CW-pair with $X$ a finite CW-complex and let $Y$ be a nilpotent topological space.
    For a given map $f \colon A \rightarrow Y$, let $\mathrm{C}_f(X,Y)$ be the topological space of all continuous maps that restrict to $f$ on $A$ and let $F \colon X \rightarrow Y$ and extension of $f$.
    Let $\mathsf{A}(\iota)\colon \mathsf{A}(X) \twoheadrightarrow \mathsf{A}(A)$ be a surjective model for $\iota$, $(\Lambda V_Y,d)$ a Sullivan model for $\Omega(S(Y))$, the polynomial differential forms on the singular set $S(Y)$, and let $\varphi \colon \Lambda V_Y \rightarrow \mathsf{A}(A)$ be a rational model for $f$ that is covered by $\Phi \colon \Lambda V_Y \rightarrow \mathsf{A}(X)$, the rational model for $F$.% such that $\mathsf{A}(\iota) \circ \Phi = \varphi$.
   % Let further $\mathrm{Der}_n^\Phi(\Lambda V_Y,\ker \mathsf{A}(\iota))$ denote the space of all $\Phi$-derivations, 
    
    Then, for all $n\geq 1$, the map that assigns to a homotopy class $G \colon S^n \rightarrow \mathrm{C}_f(X,Y)$ the associated derivation $\theta^n_{G}$ of its rational model is a bijection of sets
    \begin{equation*}
        \pi_n(\mathrm{C}_f(X,Y),F) \otimes \bbK \longrightarrow H_n(\mathrm{Der}^\Phi(\Lambda V_Y,\ker \mathsf{A}(\iota)),\delta).
    \end{equation*}
    If $n\geq 2$ or if $n=1$ and $X=Y$ and $F=\id$, then the bijections are also group homomorphisms.
\end{theorem}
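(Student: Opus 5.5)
The proof assembles the results of this section in three steps: rationalise the target, pass to the algebraic mapping space, and apply Lemma \ref{lem: homotopy group vs derivations}.

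\emph{Step 1: reduce to a rational target.} First I replace $\mathrm{C}_f(X,Y)$ by the simplicial relative mapping space $\mathrm{map}_{Sf}(S(X),S(Y))$ via the zig-zag (\ref{eq: zig-zag of relative topolgical mapping spaces}). Next I use that $X$ is a finite complex and $Y$ is nilpotent, so $\eta\circ(\placeholder)$ from (\ref{eq: zig-zag of mapping spaces}) should be a rationalisation on the path component of $F$. Concretely, $\eta\colon S(Y)\to\langle\Lambda V_Y\rangle$ is a rationalisation, and by the standard localisation theory for mapping spaces out of finite complexes (Hilton--Mislin--Roitberg) the induced map
\[
\mathrm{map}_f(X,S(Y))_F \longrightarrow \mathrm{map}_{\eta f}(X,\langle\Lambda V_Y\rangle)_{\eta F}
\]
is a rationalisation of nilpotent spaces. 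I expect to argue this by induction over the relative cells of $(X,A)$: restriction to successive skeleta gives a tower of fibrations whose fibres are products of iterated loop spaces of $Y$, respectively of $Y_\Q$, and the five lemma applied to nilpotent groups does the rest. This yields $\pi_n(\mathrm{C}_f(X,Y),F)\otimes\bbK\cong\pi_n(\mathrm{map}_{\varphi}(X,\langle\Lambda V_Y\rangle),\eta F)$ for $\bbK=\Q$. For $\bbK=\R$, I would tensor the rational statement, using that $\Lambda V_Y\otimes\R$ models over $\R$.

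\emph{Step 2: pass to algebra.} The remaining maps in (\ref{eq: zig-zag of mapping spaces}) are weak equivalences by the corollary to the $\mu$-proposition and by Lemma \ref{lem: quasi-iso invariant fibres}. Corollary \ref{cor: Corollary of relative adjunction} then identifies the homotopy group with
\[
\pi_0\bigl(\mathrm{map}^{(\id\otimes\psi,1\otimes\Psi)}(\Lambda V_Y,H(S^n)\otimes\mathsf{A}(X))\bigr),
\]
and Lemma \ref{lem: homotopy group vs derivations}, applied with $C=\Lambda V_Y$, $B=\mathsf{A}(X)$, $p=\mathsf{A}(\iota)$ and $\Phi=\Psi$, identifies this set with $H_n(\Der^\Phi(\Lambda V_Y,\ker\mathsf{A}(\iota)),\delta)$. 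This gives the bijection, and tracing through the maps shows that it sends $[G]$ to $[\theta^n_G]$.

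\emph{Step 3: group structure (the main obstacle).} The group structure is the delicate part, because the group law on $\pi_0$ is transported and is not visible algebraically. For $n\ge2$, I would realise the pinch map $S^n\to S^n\vee S^n$ algebraically. Its model $H(S^n\vee S^n)\to H(S^n)$ sends both fundamental classes to $\vol_{S^n}$, and $H(S^n\vee S^n)=H(S^n)\oplus_{\bbK}H(S^n)$ is formal. So the sum of $G_1$ and $G_2$ corresponds to the homomorphism with components $\Phi+\vol_1\otimes\theta_1+\vol_2\otimes\theta_2$, and pulling back along the pinch model gives $\theta_1+\theta_2$. The care needed is to check that the chosen quasi-isomorphisms $H(S^n)\to\Omega(S^n)$ and $H(S^n\vee S^n)\to\Omega(S^n\vee S^n)$ are compatible with the pinch up to homotopy relative to the basepoint. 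This holds by formality and because such maps are unique up to homotopy. For $n=1$ with $X=Y$ and $F=\id$, I would instead use that $\mathrm{C}_A(X)$ is a monoid, so its $\pi_1$ is abelian and the loop product agrees with the pointwise composition product (Eckmann--Hilton). Composition corresponds to composing the homomorphisms $\id+\vol\otimes\theta$, and $\vol^2=0$ gives $(\id+\vol\otimes\theta_1)\circ(\id+\vol\otimes\theta_2)=\id+\vol\otimes(\theta_1+\theta_2)$. So additivity follows here as well, provided the composition is first modelled on Sullivan models with $\Phi=\id$, using that $\Lambda V_X$ serves as both source and target.
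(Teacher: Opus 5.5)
Your proposal is correct and follows essentially the same route as the paper. It uses the zig-zag of mapping spaces composed with the rationalisation $\eta\circ(-)$ (the paper simply cites Hilton--Mislin--Roitberg where you sketch an induction over relative cells), then Corollary \ref{cor: Corollary of relative adjunction} and Lemma \ref{lem: homotopy group vs derivations} for the bijection, the pinch map for $n\geq 2$, and for $n=1$ the monoid structure with $\mathrm{vol}\cup\mathrm{vol}=0$ plus Eckmann--Hilton.

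The one point to sharpen is why the models are compatible with the pinch map. Formality alone does not give this: $S^1\vee S^1$ is formal too, yet there the model of the pinch picks up Baker--Campbell--Hausdorff correction terms, which is why additivity fails for a general $F$ when $n=1$. What makes it work for $n\geq 2$ is that the minimal model of $S^n\vee S^n$ coincides with $H(S^n\vee S^n)$ in degrees $\leq n$, so the model of the pinch is forced to be the naive sum, exactly as the paper argues.
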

\begin{proof}
    It is enough to prove the theorem for $\bbK = \Q$ because $(-)\otimes_\Q\mathbb{K}$ is an exact functor and the right-hand side is compatible with tensor products in the sense that $H_n\bigl(\Der^\Phi(\Lambda V_X,\ker \mathsf{A}(\iota))\bigr) \otimes \bbK \cong  H_n\bigl(\Der^{\Phi\otimes\id_\bbK}(\Lambda V_X\otimes \bbK,\ker [\mathsf{A}(\iota) \otimes \id_\bbK] ) \bigr) $.

    Recall the zig-zag of weak equivalences of relative mapping spaces (\ref{eq: zig-zag of relative topolgical mapping spaces}).
    It can be further composed the `homotopy-unit' $\eta \colon K \rightarrow  \langle \Lambda V_K\rangle$, meaning, the lift of the unit map $K \rightarrow \langle \Omega(K)\rangle$ under the quasi-isomorphism $\Lambda V_K \rightarrow \Omega(K)$, which yields the zig-zag
    \begin{equation*}
         \mathrm{C}_f(X,Y) \xleftarrow{\simeq} |SC_f(X,Y)| \xrightarrow{\simeq} | \mathrm{map}_f(S(X),S(Y))| \xrightarrow{\eta\circ (\placeholder)} |\mathrm{map}_{\eta\circ f}(S(X),\langle\Lambda V_{Y} \rangle))|.
    \end{equation*}
    By applying homotopy groups and inverting the isomorphisms induced by the left horizontal map together with Corollary \ref{cor: Corollary of relative adjunction} and Lemma \ref{lem: homotopy group vs derivations}, we obtain a homomorphism
    \begin{align*}
        \pi_n( \mathrm{C}_f(X,Y),F) \rightarrow  \pi_n(\mathrm{map}_{\eta \circ f}(S(X),\langle \Lambda V_{Y}\rangle),\eta \circ F) &\cong \pi_n(\mathrm{map}^{\eta \circ f}(\Lambda V_{Y}, \Omega( S(X)) ), \eta\circ F) \\ 
        &\cong \pi_n(\mathrm{map}^\varphi(\Lambda V_{Y}, \mathsf{A}(X) ), \Phi), 
    \end{align*}
    where $\Phi$ is a model for $\eta \circ F$ and $\varphi = \mathsf{A}(\iota)\circ \Phi$ is a model for $\eta \circ f$.
    
    It is proved in \cite[Section 11.2]{Bousfield1976PLdeRham} that the `homotopy-unit' $\eta \colon K \rightarrow \langle \Lambda V_K\rangle$ is a rationalisation if $K$ is a connected, nilpotent, simplicial set of finite $\Q$-type.
    Since $X$ is a finite nilpotent CW-complex, it follows from the work of Hilton-Mislin-Roitberg \cite{Hilton1975Localisation} that postcomposition with this map yields a $\Q$-localisation $\eta\circ (\placeholder) \colon \mathrm{map}_f(S(X),S(Y)) \rightarrow \mathrm{map}_{\eta\circ f}(S(X),\langle\Lambda V_{Y} \rangle))$. 
    In particular, the above homomorphism $\pi_n( \mathrm{C}_f(X,Y),F) \rightarrow \pi_n(\mathrm{map}^\varphi(\Lambda V_{Y}, \mathsf{A}(X) ), \Phi) $ is, in fact, an isomorphism after completing the domain with $\Q$.   

    By Lemma \ref{lem: homotopy group vs derivations}, we obtain the claimed bijection between the groups $\pi_n(\mathrm{C}_f(X,Y);F)\otimes \Q$ and $H_n(\Der^\Phi(\Lambda V_{Y},\ker \mathsf{A}(\iota))$.

    \vspace{6pt}
    
    It remains to show that this bijection is, in fact, a homomorphism if either $n\geq 2$ or $n=1$ and $\mathrm{C}_f(X,Y) = \hAut_A(X)$ and $F=\id$.
    We begin with the higher homotopy groups.
    By \cite[Theorem 2.2]{Hilton1975Localisation}, the fibre of a fibration is nilpotent if the total space is.
    Thus, we have an isomorphism $\pi_n(\mathrm{C}(X,Y)) \otimes \Q \cong \pi_n(\mathrm{C}(X,Y_\Q)$.
    The sum of $[g_1]$ and $[g_2]$ in $\pi_n(\mathrm{C}_f(X,Y),F) \otimes \Q$ is therefore represented by the composition
    \begin{equation*}
        \xymatrix{ S^n \ar[rr]^c && S^n \vee S^n \ar[rr]^{g_1 \vee g_2} && \mathrm{C}_f(X,Y_\Q) }
    \end{equation*}
    with the first map being the collapse map.
    Its adjoint under the exponential law for mapping spaces yields the composition
    \begin{equation*}
        \xymatrix{ S^n \times X \ar[rr]^-{c \times \id_X} && (S^n \vee S^n) \times X \ar[rr]^-{\mathrm{Ad}(g_1 \vee g_2)} && Y_\Q }
    \end{equation*}
    and because of the universal property of localisations, we get a factorisation
    \begin{equation*}
        \xymatrix{ S^n_\Q \times X_\Q \ar[rr]^-{c_\Q \times \id} && (S^n \vee S^n)_\Q \times X_\Q \ar[rr]^-{\mathrm{Ad}(g_1 \vee g_2)_\Q} && Y_\Q }
    \end{equation*}
    Since all involved spaces are nilpotent (it is here that we are using $n\geq 2$ to guarantee that $(S^n\vee S^n)_\Q = \langle \mathsf{M}_{S^n \vee S^n}\rangle$ ), 
    %If $N=1$
    the homotopy class of this map has a unique (up to homotopy) algebraic counterpart
    \begin{equation*}
        \xymatrix{ H(S^n) \otimes \mathsf{A}(X) && \mathsf{M}_{S^n \vee S^n} \otimes \Lambda V_X \ar[ll]_{c^\bullet \otimes \id } \ar[d]_\simeq  && \Lambda V_Y \ar[ll]_-{(g_1\vee g_2)^\bullet} \ar[lld]^{(f\vee g)^\bullet}\\
        (H(S^n) \oplus_{\Q} H(S^n))\otimes \mathsf{A}(X) \ar[u]^+ && H(S^n\vee S^n) \otimes \mathsf{A}(X) \ar@{=}[ll] \ar[llu]^{H(c)\otimes \id} && }
    \end{equation*}
    where $\mathsf{M}_{S^n\vee S^n}$ is the minimal model for $S^n \vee S^n$ and $c^\bullet \colon \mathsf{M}_{S^n\vee S^n} \rightarrow H(S^n)$ is a rational model for the collapse map $c$ and $(g_1\vee g_2)^\bullet$ is a rational model for $g_1 \vee g_2$.
    The truncated chain complex $\mathsf{M}_{S^n\vee S^n}^{\leq n}$ is isomorphic to $H(S^n\vee S^n) = H(S^n) \oplus_\Q H(S^n)$, so the map $c^\bullet$ agrees with the sum.

    On the other, the dga-homomorphism $(f \vee g)^\bullet \colon \Lambda V_Y \rightarrow \bigl( H(S^n) \oplus_\Q H(S^n) \bigr) \otimes \mathsf{A}(X)$ decomposes uniquely into $1 \otimes \Phi + \vol_{S^n_1} \otimes \theta^n_f + \vol_{S^n} \otimes \theta^n_g$ from which we deduce 
    \begin{equation*}
        \theta^n_{[f] + [g]} = \theta^n_{(f\vee g) \circ c} = \theta_f^n + \theta_g^n. 
    \end{equation*}

    For $n=1$ we cannot apply the same strategy because $S^1 \vee S^1$ is not nilpotent and therefore the composition $S^1 \vee S^1 \rightarrow \langle \mathsf{M}_{S^1\vee S^1}\rangle$ does not need to yield a bijection $[S^1 \vee S^1,Y_\Q] \cong [\langle \mathsf{M}_{S^1\vee S^1}\rangle, Y_\Q]$, which this is the reason why we restrict our consideration to the case $X = Y$ and $F = \id$.
    Indeed, in this case, $\hAut_A(X)$ carries a monoid structure given by composition.
    We first replace $\mathsf{A}(\iota) \colon \mathsf{A}(X) \twoheadrightarrow \mathsf{A}(A)$ by a strict Sullivan model $\iota^\bullet \colon \Lambda V_X \twoheadrightarrow \Lambda V_A$ in the sense that the follwing diagram, in which the horizontal arrows are quasi-isomorphisms, (strictly) commute
    \begin{equation*}
        \xymatrix@R-0.5em{ \Lambda V_X \ar[rr]^\simeq \ar@{->>}[d]_{\iota^\bullet} && \mathsf{A}(X) \ar@{->>}[d]^{\mathsf{A}(\iota)} \\ \Lambda V_A \ar[rr]^\simeq && \mathsf{A}(A). }
    \end{equation*}
    If $\psi$ and $\Psi$ denote a homotopy lift of $\varphi$ and $\Phi$, respectively, then the strict model induces an isomorphism $\pi_n(\mathrm{map}^{\psi}(\Lambda V_Y,\Lambda V_X),\Psi) \xrightarrow{\cong} \pi_n(\mathrm{map}^\varphi(\Lambda V_Y,\mathsf{A}(X)),\Phi)$ by Lemma \ref{lem: quasi-iso invariant fibres}.
    Thus, $H_n(\Der^\Psi(\Lambda V_Y,\ker \iota^\bullet),\delta)$ and $H_n(\Der^\Phi(\Lambda V_X,\ker \iota^\bullet),\delta)$ are isomorphic if they are equipped with the groups structure coming from the homotopy groups.
    
    Theorem \ref{thm: fundamental theorem} implies that the groups remain isomorphic under pre-composition with quasi-isomorphisms between Sullivan models $\Lambda V_Y \rightarrow \Lambda W_Y$ of $Y$, so we are allowed to pick the same Sullivan model in domain and target because we assumed $X=Y$.
    Since $\Psi$ is then a rational models of the identity on $X$, we may assume that $\Psi = \id_{\Lambda V_X}$.
    
    We will show that the group structure on $\pi_1(\hAut_A(X),\id)$ that comes from the monoid structure agrees with the addition on $\Der^{\id}_1(\Lambda V_X,\ker \iota^\bullet)$.
    This will prove the second statement, because this groups structure agrees with the fundamental group structure by the Eckmann-Hilton argument.

    Under the exponential law, the composition of two maps $g_1,g_2 \colon S^1 \rightarrow \hAut_A(X)$ corresponds to the composition of the maps in the upper horizontal line of the following diagram:
    \begin{equation*}
        \xymatrix@R-0.5em{ S^1  \times X \ar[rr]^{\Delta \times \id_X} && S^1 \times S^1 \times X \ar[rr]^-{\id_{S^1}\times \mathrm{Ad}(g_2)} && S^1 \times X \ar[rr]^{\mathrm{Ad}(g_1)} && X \\
        S^1  \times A \ar[rr]^{\Delta \times \id_A} \ar@{^{(}->}[u] && S^1 \times S^1 \times A \ar[rr]^{\id_{S^1}\times \mathrm{pr}_A} \ar@{^{(}->}[u] && S^1 \times A \ar[rr]^{\mathrm{pr}_A} \ar@{^{(}->}[u] && A \ar@{^{(}->}[u].   }
    \end{equation*}
    Since all involved spaces are nilpotent, each continuous map in this composition can be modelled by a dga-homomorphism
    \begin{equation*}
        \xymatrix{ H^1(S^1) \otimes \Lambda V_X && H^1(S^1) \otimes H^1(S^1) \otimes \Lambda V_X \ar[ll]_-{H(\Delta)\otimes \id}^-{=\cup \otimes \id} && H(S^1) \otimes \Lambda V_X \ar[ll]_-{\id \otimes \mathrm{Ad}(g_2)} & \Lambda V_X \ar[l]_-{\mathrm{Ad}(g_1)}. }
    \end{equation*}

    The natural decomposition (\ref{eq: dec dga into derivation}) together with the fact that $\theta^0_{g_1} = \theta^0_{g_2} = \id$ (because $F= \id$) implies
    \begin{equation*}
        \bigl(\id \otimes \mathrm{Ad}(g_2)\bigr) \circ \mathrm{Ad}(g_1) = 1 \otimes 1 \otimes \id_{\Lambda V_X} + \vol \otimes 1 \otimes \theta^1_{g_1} + 1 \otimes \vol \otimes \theta^1_{g_2} + \vol \otimes \vol \otimes \theta^1_{g_2}\otimes^1_{g_1}.
    \end{equation*}
    From $\vol  \cup \vol = 0 \in H^2(S^1)$, we conclude that the composition with $H(\Delta) \otimes \id_{\Lambda V_X}$ yields
    \begin{equation*}
        (H(\Delta)\otimes \id) \circ  \mathrm{Ad}(g_2) \circ \mathrm{Ad}(g_1) = 1 \otimes \id + \vol \otimes (\theta^1_{g_1} + \theta^1_{g_2}). 
    \end{equation*}
    In conclusion, the group structure on $\pi_1(\hAut_A(X))$ corresponds under the bijection of Corollary \ref{cor: Corollary of relative adjunction} to the addition on $H_1(\Der^{\id}(\Lambda V_X,\ker \iota^\bullet),\delta)$ as claimed. 
\end{proof}

\section{Real Homotopy Theory of Orbifold Resolutions}\label{Section: RHT Orbifold Resolutions}

This section is devoted to the proof of the main topological result of this article, namely Theorem \ref{Main thm: Orbifold Robustness}, which essentially says that $\pi_2(B\Diff(M))$ contains a free abelian subgroup if $M$ is obtained from an orbifold by blowing-up singularities.

\subsection{Topological Properties of Blow-Up Families}\label{Subsection: Top Properties Blow Up Families}

We begin introducing the twisted families of blow-ups that we use to resolve the singularities with.
Recall that the disc bundles associated to the complex line bundle $\mathcal{O}(k) \rightarrow \CP^1$ can be obtained from the Hopf fibration using the Borel construction:
\begin{equation*}
    D\mathcal{O}(k) = S^3 \times_{S^1, (\placeholder)^{k}} D^2 = (S^3 \times D^2)/\sim \qquad (p,\lambda) \sim (\mathrm{e}^{\iu\theta}p,\mathrm{e}^{\iu k\theta}\lambda).
\end{equation*}
We can use the Hopf fibration once more to construct a $D\mathcal{O}(k)$-bundle over $S^2$ by setting
\begin{equation*}
    \mathcal{DO}(k) = S^3 \times_{S^1,(\placeholder)^1} D\mathcal{O}(k) =  \bigl(S^3 \times D\mathcal{O}(k)\bigr) / \sim  \qquad (q,[p,\lambda]) \sim (\mathrm{e}^{\iu\tau}q,[\mathrm{e}^{\iu\tau}p,\lambda]).
\end{equation*}

Since $\mathcal{DO}(k)$ is obtained from the Borel construction of the Hopf fibration, the classifying map of this fibre bundle is given by the composition
\begin{equation*}
    f_{\mathcal{DO}(k)}  \colon S^2 \rightarrow BS^1 = \CP^\infty \xrightarrow{B\twist} B\Diff(D\mathcal{O}(k))
\end{equation*}
where $\twist \colon S^1 \rightarrow \Diff(D\mathcal{O}(k))$ is the obvious inclusion homomorphism
\begin{equation*}
        \twist \colon S^1 \rightarrow \Diff(D\mathcal{O}(k)), \qquad \mathrm{e}^{\iu\tau} \mapsto \bigl( [p,\lambda] \mapsto [\mathrm{e}^{\iu \tau}p,\lambda] \bigr).
\end{equation*}
We would like to compute the order of the elements $[f_{\mathcal{DO}(k)}] \in \pi_2(B\Diff(D\mathcal{O}(k)))$.
To this end, for a closed cofibration $A \hookrightarrow X$, let $\mathrm{hAut}(X,A)$ denote the monoid of all homotopy automorphism of pairs. 
Since diffeomorphisms on manifolds with boundary preserve the boundary by default, we get an obvious monoid-homomorphism $\Diff(D\mathcal{O}(k)) \rightarrow \hAut(D\mathcal{O}(k),\partial D\mathcal{O}(k))$.

\vspace{6pt}

\begin{prop}\label{prop: classifying maps and their lifts}
 $ $
 $ $
    \begin{itemize}
        \item[1.)] The classifying map $f_{\mathcal{DO}(k)}$ lifts to a map $S^2 \rightarrow B\Diff_\partial(D\mathcal{O}(k))$ if $1 \leq k \leq 2$.
        \item[2.)] The image of $[f_{\mathcal{DO}(k)}]$ in $\pi_2( B\hAut(D\mathcal{O}(k),S^3/\Z_k) )$ has infinite order.
    \end{itemize}
\end{prop}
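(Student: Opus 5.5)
For part 1.) I will lift the structure group directly. The bundle $\mathcal{DO}(k)$ is induced from the Hopf fibration via the homomorphism $\twist\colon S^1\to\Diff(D\mathcal{O}(k))$. So it suffices to deform $\twist$, through a homotopy of homomorphisms or at least of maps inducing the same map on $B S^1$ up to homotopy, to a circle action that is the identity near the boundary.

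On the boundary $S^3/\Z_k=\{[p,\lambda]:|\lambda|=1\}$ the action is $[p,\lambda]\mapsto[\mathrm e^{\iu\tau}p,\lambda]$. Using the relation $(p,\lambda)\sim(\mathrm e^{\iu\theta}p,\mathrm e^{\iu k\theta}\lambda)$ with $\theta=-\tau$, this equals $[p,\mathrm e^{-\iu k\tau}\lambda]$. Hence the left $S^1$ (acting by $p\mapsto \mathrm e^{\iu\tau}p$) acts on $S^3/\Z_k$ through the residual $U(2)$-action.

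For $k=1,2$ this action extends over $D^4/\Z_k$. Gluing $D\mathcal{O}(k)$ to a collar in the standard way, I will write $\mathcal{DO}(k)$ fibrewise as $D\mathcal{O}(k)\cup$ (collar). The plan is to compose the action with a fibrewise isotopy that straightens the action on a collar $[0,1]\times S^3/\Z_k$. Concretely, I would use the element of $\pi_1(\Diff(S^3/\Z_k))$ given by this circle. For $k=1,2$ it is trivial in $\pi_1$ of the $U(2)/\Z_k$-isometries of $S^3/\Z_k$ once we remember it came from $\pi_1(U(2))$ data. The key point is that the relevant class in $\pi_1(SO(4)/\Z_k)$ lifts to the centre. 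This is the only place $k\le2$ enters, namely via $\pi_1$ of the isometry group of the lens space.

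A null-homotopy of the boundary loop $S^1\to\Diff(S^3/\Z_k)$ then gives a clutching description of $\mathcal{DO}(k)$ over $S^2=D^2\cup D^2$. The transition function becomes a loop in $\Diff(D\mathcal{O}(k))$ that is homotopic to a loop in $\Diff_\partial(D\mathcal{O}(k))$, which is exactly the required lift. I expect this step, the explicit null-homotopy on the boundary, to be the main obstacle, and I would handle it by a concrete computation in $U(2)$.

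For part 2.) I will use the fibration $\hAut_\partial(D\mathcal{O}(k))\to\hAut(D\mathcal{O}(k),S^3/\Z_k)\to\hAut(S^3/\Z_k)$ and work with the lift from 1.), whose class lives in $\pi_1(\hAut_\partial(D\mathcal{O}(k)))\otimes\R\cong\R$ by Example \ref{exmpl: hAut of disc bundles}. The main step is to compute the derivation $\theta^1$ associated, via Theorem \ref{thm: hAut via derivations (general)}, to the loop $\twist$ adjusted by the collar homotopy. I would model the $S^1$-action on $D\mathcal{O}(k)$ equivariantly: the Borel construction gives the model $\Lambda[u_2]\otimes\Lambda[a_2,b_3]$ with $db_3=a_2^2+(\text{terms in }u a)$. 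The coefficient of $u\,a_2$ in $db_3$ is read off from the equivariant Euler class / Thom form. This yields $\theta^1=c\,a_2\otimes b_3^\vee$ with $c\neq0$, using that the relative generator $a_2\otimes b_3^\vee$ spans the homology.

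Since this relative class is nonzero, the element $[f_{\mathcal{DO}(k)}]$ is non-torsion in $\pi_2(B\hAut_\partial)$. To pass to $\hAut(D\mathcal{O}(k),S^3/\Z_k)$, I will check that the boundary map from $\pi_2(B\hAut(S^3/\Z_k))\otimes\R$ cannot kill it. For this I would compute $\pi_1(\hAut(S^3/\Z_k))\otimes\R$ from the derivations of $\Lambda[\beta_3]$: only $1\otimes\beta_3^\vee$ appears, in degree $3$, so this group is $0$ in degree $1$. Hence the map $\pi_1(\hAut_\partial)\otimes\R\to\pi_1(\hAut(D\mathcal{O}(k),S^3/\Z_k))\otimes\R$ is injective, which gives infinite order.
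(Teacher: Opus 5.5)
\textbf{Part 1.} Your architecture matches the paper's: clutching, restriction to the boundary, a null-homotopy of the boundary loop, then lifting into $\Diff_\partial$. The problem is the one step you flag as the obstacle. Your justification for it is false, and the fix you propose cannot work.

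The boundary loop $\tau\mapsto \mathrm{e}^{\iu\tau}\cdot\id_{\C^2}$ is \emph{not} trivial in $\pi_1(\mathrm{U}(2))$, since its determinant $\mathrm{e}^{2\iu\tau}$ has degree $2$. It is not trivial in $\pi_1(\mathrm{U}(2)/\Z_k)$ either, because $\pi_1$ of a covering injects. So no computation inside $\mathrm{U}(2)$ can produce the null-homotopy.

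The missing observation, which the paper uses, concerns the identification $\C^2\cong\mathbb{H}$. Under it, the scalar circle (which contains $\Z_k$) is a maximal torus of the copy of $S^3$ acting by quaternion multiplication on the side where complex scalars act. This is not the $\mathrm{SU}(2)\subset\mathrm{U}(2)$, which acts from the other side. This $S^3$ descends to an action on $S^3/\Z_k$ exactly when it normalises $\Z_k$, i.e.\ when $\Z_k\subseteq\{\pm1\}$, i.e.\ $k\le 2$. Simple connectivity of $S^3$ then gives the null-homotopy. Equivalently, the loop is null in $\mathrm{SO}(4)$, and $\Z_k$ is central there only for $k\le2$.

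\textbf{Part 2.} Your route differs from the paper's. The paper collapses to $\CP^2$, uses $\mathrm{U}(2)\subset\mathrm{PU}(3)$ and Sasao's theorem that $\mathrm{PU}(3)\to\hAut(\CP^2)$ is a real equivalence, and then handles general $k$ via the $\Z_k$-quotient $\mathrm{Th}(\mathcal{O}(\pm1))\to\mathrm{Th}(\mathcal{O}(k))$. An equivariant computation can work and would even give $\xi$ explicitly, but as written it has three gaps.

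First, the decisive claim $c\neq0$ is only asserted. It has content only once two things are fixed:
\begin{itemize}
\item $a_2$ must be the equivariant Thom form, vanishing on the boundary. In the absolute model $a_2\otimes b_3^\vee=-\tfrac12\delta(1\otimes a_2^\vee)$, so the $u a_2$-term can be removed by $a_2\mapsto a_2+cu$.
\item The boundary of the bundle over $S^2$ must be trivialised compatibly with your lift.
\end{itemize}
You also need to identify the twisting term of the relative model over $S^2$ with the $\theta^1$ of Theorem \ref{thm: hAut via derivations (general)}; this is standard but unproved here. Note that the Borel construction models the untwisted action, not your collar-adjusted loop. Once this is set up, the computation is as follows. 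Since $\mathsf{tw}$ is the fibre rotation $[p,\lambda]\mapsto[p,\mathrm{e}^{-\iu k\tau}\lambda]$, the equivariant Euler class is $kx-ku$. Hence $a_2^2+2k\,ua_2+k^2u^2=0$, giving $c=\pm2k\neq0$.

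Second, your exactness step is off by one. Injectivity of $\pi_1(\hAut_\partial(D\mathcal{O}(k)))\otimes\R\to\pi_1(\hAut(D\mathcal{O}(k),S^3/\Z_k))\otimes\R$ needs $\pi_2(\hAut(S^3/\Z_k))\otimes\R=0$. This is also true, since $\Der_2(\Lambda[\beta_3],\Lambda[\beta_3])=0$. The vanishing of $\pi_1$ that you computed gives surjectivity instead.

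Third, item 2 of the statement is for all $k\neq0$, but your argument runs through the lift of Part 1 and so covers only $k\in\{1,2\}$. To close this you need either the paper's $\Z_k$-quotient argument, or the surjectivity above to produce a rational lift for every $k$.
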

\begin{proof}
    We start with the proof of the first statement, so let us assume that $k \in \{\pm 1,\pm 2\}$.
    Since $\mathcal{DO}(k) \rightarrow S^2$ is a fibre bundle over a sphere, it can be obtained by a clutching construction, which means it isomorphic as a fibre bundle to the following pushout
    \begin{equation*}
        \xymatrix{ S^1 \times D \mathcal{O}(k) \ar@{^{(}->}[rr]^{(\incl,\twist)} \ar@{^{(}->}[d] && D^2 \times D \mathcal{O}(k) \ar[d] \\
        D^2 \times D \mathcal{O}(k) \ar[rr] && \mathcal{DO}(k). }
    \end{equation*}
    Thus, it suffices to show that the map $\mathsf{tw} \colon S^1 \rightarrow \mathrm{Diff}(D\mathcal{O}(k))$ can deformed to a map with values in $\Diff_\partial(D\mathcal{O}(k))$.

    To this end, we consider the following commutative diagram
   \begin{equation*}\label{eq: isometry vs homotopy auto fibration}
       \xymatrix{ \Omega \CP^1 \ar[r]^\simeq \ar@{-->}[d] & \mathrm{hofib}_{1}(\mathrm{incl}) \ar[r] \ar@{..>}[dl]_\Phi \ar[d] & S^1 \ar[r] \ar[d]^{\mathsf{tw}} & S^3 \ar[d]^\ell \\
       \Diff_{\partial} (D\mathcal{O}(k)) \ar[r]_\simeq \ar[d] & \mathrm{hofib}_{\id}(\restriction_{S^3/\Z_k}) \ar[r] \ar[d] & \Diff(D\mathcal{O}(k),S^3/\Z_k) \ar[r]^-{\restriction_{S^3/\Z_k}} & \Diff(S^3/\Z_k) \ar[d] \\
       \hAut_{\partial} (D\mathcal{O}(k)) \ar[r]_\simeq & \mathrm{hofib}_{\id}(\restriction_{S^3/\Z_k}) \ar[r] & \hAut(D\mathcal{O}(k),S^3/\Z_k) \ar[r]^-{\restriction_{S^3/\Z_k}} & \hAut(S^3/\Z_k),}
   \end{equation*}
   where $\ell \colon S^3 \curvearrowright S^3/\Z_k$ is action given by left multiplication\footnote{It is here, where we use the assumption $1 \leq |k| \leq 2$ because only in this case is the normaliser subgroup of $\Z_k$ inside $S^3$ the group $S^3$.}. 
   The dotted arrow is the map $\Phi$ defined as follows:
   If we use the concrete description $\mathrm{hofib}_1(\mathrm{incl}) = \{ (\mathrm{e}^{\iu\tau},\gamma) \in S^1 \times \mathrm{C}([0,1],S^3) \, : \, \gamma(0) = \mathrm{e}^{\iu\tau}, \gamma(1) = 1  \}$, then %$\Phi$ that is induced by a null-homotopy $H\colon [0,1] \times S^1  \rightarrow S^3$ with $H(\placeholder,0) = \mathrm{incl}$ and $H(\placeholder, 1) = 1$ as follows:
   we define $\Phi(e^{\iu\tau},\gamma)$ to be the following diffeomorphism:
   \begin{equation*}
       D\mathcal{O}(k) \ni [p,\lambda] \mapsto \begin{cases}
           [ e^{\iu \tau}p, 0 ], & \text{if } \lambda = 0, \\
           [ \gamma(|\lambda|) \cdot  |\lambda| \cdot (\bar{\lambda}/|\lambda|)^{1/k}\cdot  p , 1 ], & \text{if } \lambda \neq 0. 
       \end{cases} 
   \end{equation*}

   A choice of a null-homotopy $H \colon [0,1] \times S^1 \rightarrow S^3$ with $H(\placeholder,0)=\mathrm{incl}$ and $H(\placeholder,1) = 1$ defines a map $S^1 \rightarrow \mathrm{hofib}_{1}(\incl)$, which is then the desired lift.

   \vspace{6pt}
   
   To prove the second statement, we first restrict to the special case where $k = \pm 1$.
   We wish to show that
   \begin{equation*}
       \pi_1(S^1,1)\otimes \R \xrightarrow{\pi_1(\mathsf{tw} \otimes \R)} \pi_1(\hAut(D\mathcal{O}(\pm 1),S^3),\id)\otimes \R .
   \end{equation*}
   is injective.
   To this end, observe that the map $\twist$ can be extended by (the adjoint of) the group action of $\mathrm{U}(2)$ on $D\mathcal{O}(\pm 1)$. 
   Together with the tautological action of $\mathrm{PU}(3)$ on $\CP^2$ they fit into the following commutative diagram
   \begin{equation*}\label{eq: linear action on disc bundles}
       \xymatrix{ S^1 \times D\mathcal{O}(\pm 1) \ar[rr] \ar[d]_\twist && \mathrm{U}(2) \times D\mathcal{O}(\pm 1) \ar[rr] \ar[d] &&  D\mathcal{O}(\pm 1) \ar[d] \\
        S^1 \times \CP^2 \ar[rr] && \mathrm{PU}(3)\times \CP^2 \ar[rr] && \CP^2, }
   \end{equation*}
   where the inclusion $S^1 \cdot \mathrm{SU}(2) = \mathrm{U}(2) \rightarrow \mathrm{PU}(3)$ is given by $A \mapsto \mathrm{diag}(A,1)$ and the map $D\mathcal{O}(1) \rightarrow D\mathcal{O}(1)/S^3 = \mathrm{Th}(\mathcal{O}(1)) = \CP^2$ is the collapse map.
   For $D\mathcal{O}(-1)$, we use the map $D\mathcal{O}(-1) \xrightarrow{ \id \times \bar{\cdot} } D\mathcal{O}(1) \rightarrow \CP^2$ instead.

   Overall, we now end up with the following commutative diagram
   \begin{equation*}\label{eq: action comparison}
       \xymatrix{ &\Omega \CP^2 \ar[d] \ar@{=}[rr] && \Omega \CP^2 \ar[d] \\
       S^1 \ar[r]  & \mathrm{U}(2) \ar[r] \ar[d] & \hAut(D\mathcal{O}(\pm 1),S^3) \ar[r]^-{(\placeholder)/S^3}  &\hAut_\ast(\CP^2) \ar[d] \\ 
       & \mathrm{PU}(3) \ar[rr] && \hAut(\CP^2), }
   \end{equation*}
   By a result of Sasao \cite{Sasao1974CP}, the map $\mathrm{PU}(3) \rightarrow \hAut(\CP^2)$ is a real homotopy equivalence.
   It follows that comparison map $\mathrm{U}(2) \rightarrow \hAut_\ast(\CP^2)$ is a real homotopy equivalence, which implies that $0 \neq [\twist] \in \pi_1( \hAut(D\mathcal{O}(\pm1),S^3) )\otimes \R$ as claimed.

   \vspace{6pt}

   To generalise this result to all disc-bundles $D\mathcal{O}(k)$ (excluding $k=0$), we observe that we have a $\Z_k$-action on $D\mathcal{O}(\pm 1)$ that is given by $\zeta_\bullet[p,\lambda] = [p,\zeta\cdot \lambda]$, which agrees with the restriction of the twist action $\mathsf{tw}\colon S^1 \curvearrowright D\mathcal{O}(\pm1)$ to $\Z_k$.
   Like the twist action, it can be extended to an action on $\mathrm{Th}(D\mathcal{O}(\pm 1)) = \CP^2$ so that $\mathrm{Th}(\mathcal{O}(\pm 1))/\Z_k = \mathrm{Th}(\mathcal{O}(k))$.
   The quotient map $p_k \colon \mathrm{Th}(\mathcal{O}(\pm 1)) \rightarrow \mathrm{Th}(\mathcal{O}(k)) = \mathrm{Th}(\mathcal{O}(\pm 1))/\Z_k$ induces an isomorphism between all real homotopy groups because the involved space are simply connected and quotient maps of finite group actions induce isomorphisms on real homology \cite[Theorem III.2.4]{Bredon1972IntroCptTrafoGrps}.
   Thus, the (extended) twist actions fit into a commutative diagram 
   \begin{equation*}
       \xymatrix{ S^1 \times \mathrm{Th}(\mathcal{O}(\pm 1)) \ar[rr]^{\mathsf{tw}/S^3} \ar[d] && \mathrm{Th}(\mathcal{O}(\pm 1)) \ar[d] \\
       S^1 \times \mathrm{Th}(\mathcal{O}(k) \ar[rr]^{\mathsf{tw}/(S^3/\Z_k)} && \mathrm{Th}(\mathcal{O}(k)) }
   \end{equation*}
   with the vertical arrows inducing isomorphisms on real homotopy groups.
   Hence, if the (adjoint of the) lower horizontal map $\mathsf{tw}/(S^{3}/\Z_k) \colon (S^1,\{1\}) \rightarrow (\hAut_\ast(\mathrm{Th}(\mathcal{O}(k))),\id)\otimes \R$ would present the zero element in $\pi_1(\hAut(\mathrm{Th}(\mathcal{O}(k))), \id) \otimes \R$, then $\mathsf{tw}/S^3$ would also represent the zero element, and we showed above that this is not the case.
\end{proof}
Abusing notation, we denote the lift of the twist map given in the proof of the first part of Proposition \ref{prop: classifying maps and their lifts} again by $\mathsf{tw} \colon S^1 \rightarrow \Diff_\partial(D\mathcal{O}(k))$ as well as its adjoint map $\mathsf{tw} \colon S^1 \times D\mathcal{O}(k) \rightarrow D\mathcal{O}(k)$ if $1 \leq k \leq 2$.
This is legitimate because a null-homotopy $H\colon S^1 \times [0,1] \rightarrow S^3$ of the inclusion $S^1 \hookrightarrow S^3$ is unique up to homotopy.

From Example \ref{exmpl: hAut of disc bundles} we immediately obtain the next consequence.
\begin{cor}\label{cor: derivation of the twist}
    If $1 \leq |k| \leq 2$, then there is a $\xi \neq 0 \in \R$ such that
    \begin{equation*}
        [\theta^1_{\mathsf{tw}}] = \xi \cdot [a_2 \otimes b_3^\vee] \in H_1(\Der(\mathsf{M}_{D\mathcal{O}(k)},\ker \mathsf{M}(\iota)),\delta) \cong \pi_1( \hAut_\partial( D\mathcal{O}(k) ),\id ) \otimes \R.
    \end{equation*}
\end{cor}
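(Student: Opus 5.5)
The plan is to combine the computation in Example~\ref{exmpl: hAut of disc bundles} with the second part of Proposition~\ref{prop: classifying maps and their lifts}. The first step identifies the target group explicitly. In Example~\ref{exmpl: hAut of disc bundles} we saw that $\Der_1(\mathsf{M}_{D\mathcal{O}(k)},\ker\mathsf{M}(\iota))$ is one-dimensional, spanned by $a_2\otimes b_3^\vee$, and that $\Der_n(\mathsf{M}_{D\mathcal{O}(k)},\ker\mathsf{M}(\iota))=0$ for $n\geq 2$. The generator is a cycle, since $\delta_1(a_2\otimes b_3^\vee)=0$. There are no boundaries in degree $1$, because the degree-$2$ part of the complex vanishes. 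Hence
\begin{equation*}
H_1(\Der(\mathsf{M}_{D\mathcal{O}(k)},\ker\mathsf{M}(\iota)),\delta)=\R\cdot[a_2\otimes b_3^\vee].
\end{equation*}
Consequently $[\theta^1_{\mathsf{tw}}]=\xi\cdot[a_2\otimes b_3^\vee]$ for some $\xi\in\R$, and the whole content of the corollary is that $\xi\neq 0$.

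By Theorem~\ref{thm: hAut via derivations (general)}, in the case $n=1$, $X=Y=D\mathcal{O}(k)$, $A=\partial D\mathcal{O}(k)$, $F=\id$, the assignment $[g]\mapsto[\theta^1_g]$ is a group isomorphism $\pi_1(\hAut_\partial(D\mathcal{O}(k)),\id)\otimes\R\cong H_1(\Der(\mathsf{M}_{D\mathcal{O}(k)},\ker\mathsf{M}(\iota)),\delta)$. It therefore suffices to show that the lifted loop $\mathsf{tw}\colon S^1\to\Diff_\partial(D\mathcal{O}(k))\to\hAut_\partial(D\mathcal{O}(k))$ is nonzero in $\pi_1(\hAut_\partial(D\mathcal{O}(k)),\id)\otimes\R$.

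For this, I would push the class forward along the monoid map $\hAut_\partial(D\mathcal{O}(k))\to\hAut(D\mathcal{O}(k),S^3/\Z_k)$, which induces a homomorphism on $\pi_1\otimes\R$. The key point is that the image of the lifted loop is homotopic to the original twist loop $\mathsf{tw}\colon S^1\to\Diff(D\mathcal{O}(k),S^3/\Z_k)\to\hAut(D\mathcal{O}(k),S^3/\Z_k)$. I would read this off the commutative diagram in the proof of the first part of Proposition~\ref{prop: classifying maps and their lifts}. There, the lift is obtained by choosing a null-homotopy $H$ of $S^1\hookrightarrow S^3$, which gives a section $S^1\to\mathrm{hofib}_1(\incl)$ of the projection to $S^1$. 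Composing with $\Phi$ and the equivalence $\Diff_\partial\simeq\mathrm{hofib}_{\id}(\restriction_{S^3/\Z_k})$, then projecting to $\Diff(D\mathcal{O}(k),S^3/\Z_k)$, recovers $\mathsf{tw}$ up to homotopy, because the square involving $\Phi$ commutes.

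By the second part of Proposition~\ref{prop: classifying maps and their lifts}, and using $\pi_2(B\hAut)\cong\pi_1(\hAut)$, the class of $\mathsf{tw}$ has infinite order in $\pi_1(\hAut(D\mathcal{O}(k),S^3/\Z_k))$. In fact that proof shows directly that it is nonzero after tensoring with $\R$. Since a homomorphism carries zero to zero, the lifted class is nonzero in $\pi_1(\hAut_\partial(D\mathcal{O}(k)),\id)\otimes\R$, and hence $\xi\neq0$.

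The main obstacle is the compatibility step: verifying carefully that the composite of the lift with the forgetful map $\Diff_\partial\to\Diff(D\mathcal{O}(k),S^3/\Z_k)$ is homotopic, as a based loop at $\id$, to the original $\mathsf{tw}$. This requires that the equivalence $\Diff_\partial(D\mathcal{O}(k))\simeq\mathrm{hofib}_{\id}$ and the map $\Phi$ be compatible with the projections in the diagram. It also requires that the ambiguity in the choice of null-homotopy is harmless, which holds because that choice is unique up to homotopy.
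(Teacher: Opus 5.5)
Your proposal is correct and is essentially the paper's argument. The paper states the corollary as an immediate consequence of Example~\ref{exmpl: hAut of disc bundles}, which gives $H_1 = \R\cdot[a_2\otimes b_3^\vee]$, together with the non-triviality of the twist from the second part of Proposition~\ref{prop: classifying maps and their lifts}; your check that the boundary-fixing lift of $\mathsf{tw}$ maps to the original twist in $\mathrm{hAut}(D\mathcal{O}(k),S^3/\Z_k)$ fills in a step the paper leaves implicit. You were right to push forward to the pair version rather than to the absolute $\mathrm{hAut}(D\mathcal{O}(k))$, whose $\pi_1\otimes\R$ vanishes.
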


\subsection{Family Resolutions of tailor-made orbifolds}\label{subsection: resolving orbifolds}

With the bundles $\mathcal{DO}(k) \rightarrow S^2$ at our disposal, we are now able to resolve singularities of orbifolds with tubular neighbourhoods of the form $S\times D^4/\Z_2$ in a `twisted fashion' by replacing them with $S \times \mathcal{DO}(k)$ to obtain fibre bundles over spheres.
In this subsection, we will classify this construction through continuous maps between the classifying space $B\Diff_\partial(D\mathcal{O}(k)) \rightarrow B\Diff(M)$ and study their effects on rational homotopy groups through appropriate rational models.

From now on, we will only consider orbifolds that are smooth, closed and simply connected.
Recall from the introduction that we call a pair $(X,\mathcal{S})$ consisting of an orbifold $X$ and closed smooth manifold $\mathcal{S} \subseteq X$ a \emph{tailor-made orbifold} if $\mathcal{S}$ satisfies the following properties:
   \begin{itemize}
       \item[(i)] Each path component $S \subseteq \mathcal{S}$ has a tubular neighbourhood $\mathrm{Tub}(S)$ of the form $S \times D^4/\Z_{k_S}$ with $k_S \in \{1,2\}$ and the antipodal action $\Z_2 \curvearrowright D^4$.
       \item[(ii)] $\mathcal{S}$ contains all singular points of $X$.
   \end{itemize}
Although one may think of $\mathcal{S}$ as the singular set of the orbifold, we wish to remark that $\mathcal{S}$ is allowed to contain regular points as well. 

Since $\mathcal{S}$ is closed, it has only finitely path components, so we may and do assume that the tubular neighbourhoods of different path components do not intersect. 
We denote the union of these tubular neighborhoods by $\mathrm{Tub}(\mathcal{S})$.
Furthermore, since each path component has a trivial normal bundle, $\mathcal{S}$ is an orientable manifold. 

Condition (ii) implies that the complement $U = X \setminus \Tub(\mathcal{S})$ is a smooth manifold. 
Obviously, as a topological space $X$ is the following pushout
\begin{equation*}
    \xymatrix{ \bigsqcup_{S \in \pi_0(\mathcal{S})}  S\times S^3/\Z_{k_S} \ar@{=}[r] & \partial U = \partial \mathrm{Tub}(\mathcal{S}) \ar[rr] \ar[d] && \mathrm{Tub}(\mathcal{S}) \ar@{=}[r] \ar[d] & \bigsqcup_{S \in \pi_0(\mathcal{S})} S \times D^4 /\Z_{k_S}  \\
    & U \ar[rr] && X, }
\end{equation*}
with $k_S \in \{\pm1,\pm2\}$ of course.
A resolving manifold $M$ is now described as the pushout
\begin{equation*}
    \xymatrix{ \bigsqcup_{S \in \pi_0(\mathcal{S})} S \times S^3 /\Z_{k_S} \ar[rr] \ar[d] &&  \bigsqcup_{S \in \pi_0(\mathcal{S})} S \times D\mathcal{O}(k_S)  \ar[d] \\
    U \ar[rr] && M, }
\end{equation*}
with $k_S \in \{\pm 1, \pm2\}$ accordingly to the type of tubular neighbourhood it resolves.
By the pushout property, the resolutions maps $\blowdown \colon D\mathcal{O}(k) \rightarrow D^4/\Z_{k}$ yield a resolution map $\blowdown \colon M \rightarrow X$.

\begin{lemma}
    The resolution map $\blowdown \colon M \rightarrow X$ induces an isomorphism on the fundamental group.
    In particular, $M$ is simply connected.
\end{lemma}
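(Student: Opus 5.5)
We compare the two pushout descriptions of $X$ and $M$ via the Seifert--van Kampen theorem. The resolution map $\blowdown$ is the identity on $U$ and, on each piece $S\times D\mathcal{O}(k_S)$, it is $\id_S\times\blowdown$ with $\blowdown\colon D\mathcal{O}(k_S)\to D^4/\Z_{k_S}$ restricting to the identity on the common boundary $S^3/\Z_{k_S}$. So it suffices to show that $\blowdown\colon D\mathcal{O}(k)\to D^4/\Z_k$ induces an isomorphism on $\pi_1$. Naturality of van Kampen then gives the result, since the two pushouts share the gluing data $\partial U\to U$.

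For this local statement, both spaces are simply connected. The space $D^4/\Z_k$ is a cone on $S^3/\Z_k$, hence contractible, and $D\mathcal{O}(k)\simeq\CP^1$ by Example \ref{exmpl: Rational model disc bundles}. Consequently $\pi_1(S\times D\mathcal{O}(k_S))\cong\pi_1(S)\cong\pi_1(S\times D^4/\Z_{k_S})$, and the isomorphism is induced by $\id_S\times\blowdown$.

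To apply van Kampen, replace the pushouts by open covers. Thicken $U$ by a collar $S\times S^3/\Z_{k_S}\times[0,\epsilon)$ into each piece. Each intersection is then a collar homotopy equivalent to $S\times S^3/\Z_{k_S}$. The boundary components are connected when $S$ is connected, so we work one component of $\mathcal{S}$ at a time. We glue the pieces inductively, adding one component $S$ per step, so that every intersection is path connected and van Kampen applies at each step. At each step,
\begin{equation*}
\pi_1(\text{new space})\cong \pi_1(\text{old space})\ast_{\pi_1(S\times S^3/\Z_{k_S})}\pi_1(S),
\end{equation*}
and this formula is the same for $X$ and $M$. The maps of the diagram for $M$ are carried to those of the diagram for $X$ by isomorphisms that commute with all the structure maps, so the induced map on amalgamated products is an isomorphism. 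Induction over the finitely many components then gives $\pi_1(M)\cong\pi_1(X)$ via $\blowdown$.

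The orbifold $X$ is simply connected as a topological space by the standing assumption. Hence $M$ is simply connected.

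The main obstacle is the mild point-set issue of applying van Kampen to the orbifold $X$. The pieces $S\times D^4/\Z_{k_S}$ are not manifolds, so one needs open neighbourhoods that deformation retract onto the pieces. The collar neighbourhoods above provide these, since the collar lies in the manifold part of $X$. One must also check that the basepoints and the connectivity of the intersections are handled correctly in the inductive step.
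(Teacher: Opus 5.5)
Your proof is correct and follows the paper's argument. It applies van Kampen inductively, one component of $\mathcal{S}$ at a time, and uses that $D\mathcal{O}(k)\to D^4/\Z_k$ is a $\pi_1$-isomorphism between simply connected spaces, together with the simple connectivity of $X$. The only differences are cosmetic: the paper factors $M\to X$ through intermediate orbifolds, each resolving a single component, so the complement of the tube is literally shared at each step, whereas you build $X$ and $M$ in parallel from $U$; and your open-collar remarks supply point-set details the paper leaves implicit.
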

\begin{proof}
    Of course, we can obtain $M$ from $X$ by resolving one component at a time yielding a finite sequence of orbifolds starting from $X$ and ending with $M$.
    Since the resolution maps $\blowdown\colon \mathcal{O}(k) \rightarrow D^4/\Z_{k}$ induce isomorphisms on the fundamental group, an inductive application of van Kampens theorem implies that the each resolution map in the sequence induces an isomorphism on the fundamental group.
    Since $X$ is simply connected, so must be $M$. 
\end{proof}

Since the inclusion $\partial U \hookrightarrow U$ induces a surjection on differential forms $\Omega(U) \twoheadrightarrow \Omega(\partial U)$, the pushout description together with Example \ref{exmpl: pushout model} allows us write down a real model of $M$ in terms of its building blocks
\begin{equation}\label{eq: pushout model}
    \xymatrix{ \Omega( \partial U ) && \bigoplus_{S \in \pi_0(\mathcal{S})} \mathsf{M}_{S^2} \otimes \Omega(S) \ar[ll] \\
    \Omega(U) \ar@{->>}[u] && \mathsf{A}(M) \ar[ll] \ar@{->>}[u], }
\end{equation}
so that
\begin{equation*}
    \mathsf{A}(M) = \Omega(U) \oplus_{\Omega(\partial U)} \bigoplus_{S \in \pi_0(\mathcal{S})} \mathsf{M}_{S^2} \otimes \Omega(S).
\end{equation*}
Similarly, there is a pushout model for the orbifold given by
\begin{equation*}
    \mathsf{A}(X) = \Omega(U) \oplus_{\Omega(\partial U)} \bigoplus_{S \in \pi_0(\mathcal{S})} \Lambda V_{D^4} \otimes \Omega(S)
\end{equation*}
with $V_{D^4} = \mathrm{span}\{b_3, \vol_{D^4}\}$ and $db_3 = \vol_{D^4}$.
For later purpose, we observe that the two cdgas contain a common differential, graded subalgebra 
\begin{equation*}
    \mathsf{B}(M) = \mathsf{B}(X) = \Omega(U) \oplus_{\Omega(\partial U)} \bigoplus_{S \in \pi_0(\mathcal{S})} \R\cdot 1 \otimes \Omega(S).
\end{equation*}
%with $\R\cdot 1 \subseteq \mathsf{M}_{S^2}$ of course.

Recall that $\mathsf{M}_{S^2} = \Lambda[a_2,b_3\,|\,db_3 = a_2^2]$. 
We index the generators of the summand $\mathsf{M}_{S^2}\otimes 1$ inside $\mathsf{A}(M)$ belonging to $S$ by $a_{2,S}$ and $b_{3,S}$.
Observe that system $\{a_{2,S},b_{3,S} \, : \, S \in \pi_0(\mathcal{S})\}$ is linearly independent in the chain complex $\mathsf{A}(M)/\mathsf{B}(M)$.
\begin{lemma}\label{lem: Injection on Cohomology}
    The resolution map $\blowdown \colon M \rightarrow X$ induces an injection on real cohomology.
    Moreover, we have $H^2(M) = H^2(X) \oplus \mathrm{span}\{a_{2,S} \, : \, S \in \pi_0(\mathcal{S})\}$.
\end{lemma}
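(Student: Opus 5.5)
We plan to compare the pushout models $\mathsf{A}(M)$ and $\mathsf{A}(X)$ through their common subalgebra $\mathsf{B} := \mathsf{B}(M) = \mathsf{B}(X)$, using the long exact sequences of the short exact sequences of chain complexes
\begin{equation*}
    0 \rightarrow \mathsf{B} \rightarrow \mathsf{A}(M) \rightarrow \mathsf{A}(M)/\mathsf{B} \rightarrow 0, \qquad 0 \rightarrow \mathsf{B} \rightarrow \mathsf{A}(X) \rightarrow \mathsf{A}(X)/\mathsf{B} \rightarrow 0.
\end{equation*}
The quotients are explicit. An element of $\mathsf{A}(M)$ is a pair $(u,\sum_S m_S)$ with matching restrictions to $\partial U$. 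Modulo $\mathsf{B}$, only the part of $m_S$ lying in $\mathsf{M}_{S^2}^{+}\otimes\Omega(S)$ survives. Hence
\begin{equation*}
    \mathsf{A}(M)/\mathsf{B} \cong \bigoplus_S \mathsf{M}_{S^2}^{+}\otimes \Omega(S), \qquad \mathsf{A}(X)/\mathsf{B} \cong \bigoplus_S \Lambda V_{D^4}^{+}\otimes\Omega(S).
\end{equation*}
Since $\Lambda V_{D^4}^+$ is acyclic, $\mathsf{B} \rightarrow \mathsf{A}(X)$ is a quasi-isomorphism. This recovers the fact that $\mathsf{B}$ models $X$: it is the model of a pushout along the contractible cone $S\times D^4/\Z_{k_S}\simeq S$.

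We next relate this to the resolution map. The map $\blowdown$ is modelled by a dga-homomorphism $\mathsf{A}(X)\rightarrow\mathsf{A}(M)$ that is the identity on $\Omega(U)$ and $\Omega(S)$. On each summand it is the model $\Lambda V_{D^4}\rightarrow \mathsf{M}_{S^2}$ of $D\mathcal{O}(k)\rightarrow D^4/\Z_k$, which kills $\vol_{D^4}$ and sends $b_3$ to the element of $\mathsf{M}_{S^2}$ corresponding to the boundary class. Up to homotopy this restricts to the inclusion $\mathsf{B}\hookrightarrow\mathsf{A}(M)$. Composing with the quasi-isomorphism $\mathsf{B}\simeq\mathsf{A}(X)$, it therefore suffices to show that $H(\mathsf{B})\rightarrow H(\mathsf{A}(M))$ is injective. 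By the long exact sequence, this holds exactly when the connecting map
\begin{equation*}
    H^{*-1}(\mathsf{A}(M)/\mathsf{B}) \rightarrow H^*(\mathsf{B})
\end{equation*}
vanishes.

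The cohomology of the quotient is $\bigoplus_S \widetilde H(S^2)\otimes H(S) = \bigoplus_S a_{2,S}\otimes H(S)$, since $\mathsf{M}_{S^2}^+$ has cohomology spanned by $a_2$. Each class $a_{2,S}\otimes\sigma$ with $\sigma$ closed lifts to the element $(0, a_{2,S}\otimes\sigma)$ of $\mathsf{A}(M)$. This is legitimate because $a_2$ restricts to $0$ on the boundary (Example \ref{exmpl: hAut of disc bundles}), so the pair lies in the fibre product. The lift is closed, since $da_2=0$. Therefore the connecting map is zero, which gives injectivity. This step is the main obstacle: one must check carefully that the pushout model map sends $a_2$ to zero on $\partial U$, which here comes from choosing the Thom form compactly supported away from the boundary.

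With the connecting map zero, the long exact sequence splits into short exact sequences
\begin{equation*}
    0\rightarrow H^*(X)\rightarrow H^*(M)\rightarrow \bigoplus_S a_{2,S}\otimes H^{*-2}(S)\rightarrow 0.
\end{equation*}
In degree $2$ the right-hand term is $\bigoplus_S a_{2,S}\otimes H^0(S)$. Because each $S$ is connected, $H^0(S)=\R$, so this term is $\mathrm{span}\{a_{2,S}\}$. The explicit closed lifts $(0,a_{2,S}\otimes 1)$ give a splitting, which yields $H^2(M)=H^2(X)\oplus\mathrm{span}\{a_{2,S}\}$.
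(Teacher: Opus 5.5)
Your argument is correct, but it is organised differently from the paper's.

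\textbf{The paper's route.} The paper proves injectivity by writing down an explicit chain-level left inverse $\eta$ of the local model $\sigma\colon \Lambda[b_3,\mathrm{vol}_{D^4}]\to\mathsf{M}_{S^2}$. This $\eta$ sends $a_2^{2n}b_3^m\mapsto k^{n+m}\mathrm{vol}_{D^4}^n b_3^m$ and odd powers of $a_2$ to zero. The paper checks that $\eta$ is compatible with the boundary restrictions, so it extends over the fibre product, and hence the map induced on cohomology by the resolution map has a left inverse. The degree-$2$ statement is then obtained by checking directly that the closed elements $(0,a_{2,S}\otimes 1)$ remain linearly independent modulo $d\mathsf{A}^1(M)$.

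\textbf{Your route.} You instead use the common subalgebra $\mathsf{B}$. Acyclicity of $\Lambda V_{D^4}^{+}$ makes $\mathsf{B}\hookrightarrow\mathsf{A}(X)$ a quasi-isomorphism. The closed lifts $(0,a_{2,S}\otimes\sigma)$, which are legitimate because the Thom form vanishes near the boundary, kill the connecting homomorphism.

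\textbf{What each buys.} Your approach needs no retraction. It also yields the full short exact sequence $0\to H^*(X)\to H^*(M)\to\bigoplus_S H^{*-2}(S)\to 0$ in every degree. So injectivity and the complete degree-$2$ splitting come out of one computation, including the fact that $H^2(X)$ and the $a_{2,S}$ together exhaust $H^2(M)$, which the paper only addresses through a chain-level look at the cokernel in degree $2$. The paper's route, in turn, avoids computing the quotient complexes and the K\"unneth step.

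\textbf{A small slip.} The local model does not kill $\mathrm{vol}_{D^4}$. Since $db_3=\mathrm{vol}_{D^4}$ and $b_3\mapsto b_3/k$, it must send $\mathrm{vol}_{D^4}\mapsto a_2^2/k$. This is harmless: only its restriction to $\mathsf{B}$ enters your argument, and that restriction is the inclusion on the nose, not merely up to homotopy.
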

\begin{proof}
    The resolution map $D\mathcal{O}(k) \rightarrow D^4 /\Z_{k}$ restricts to the identity on the boundary, so it is modelled by the dga-homomorphism 
    \begin{equation*}
       \sigma\colon  \Lambda[ b_3, \vol_{D^4}] \rightarrow \Lambda [a_2,b_3]  = \mathsf{M}_{S^2} \quad \text{ induced by } \quad b_3 \mapsto  b_3/k, \ \vol_{D^4} \mapsto a_2^2/k.
    \end{equation*}
    This dga-homomorphism has a retract in the category of chain complexes: 
    \begin{equation*}
        \eta \colon \mathsf{M}_{S^2}  =  \Lambda[a_2,b_3] \rightarrow \Lambda[b_3,\vol_{D^4}] \quad \text{ given by } \quad a_2^{2n+1}b_3^m \mapsto 0 \ \text{ and } \ a_2^{2n}b_3^m \mapsto k^{n+m}\cdot \vol_{D4^n} b_3^m.
    \end{equation*}
    Since $\eta$ is the identity on the subalgebras generated by $b_3$ it extends to a retract of the real model of $ \mathsf{A}(\blowdown) = \id_{\Omega(U)} \oplus \bigoplus_{S \in \pi_0(\mathcal{S})}\sigma$.
    (Of course, $\eta$ and its extension are not dga-homomorphisms), but since cohomology is functorial with respect to chain maps, we obtain a retract of $H(\blowdown)$ on cohomology.
    Hence $H(\blowdown)$ is injective. 

    In degree $2$, the cokernel of the retract $\id_{\Omega(U)} \oplus \bigoplus_{S \in \pi_0(\mathcal{S})} \eta$ is the direct sum of all $\mathsf{M}^2_{S^2} \otimes 1$, which is spanned by all $a_{2,S}$.
    Clearly, the elements $(0,a_{2,S} \otimes 1)$ lie in the kernel of the differential of $\mathsf{A}(M)$ and they form a linearly independent system there.
    For degree reasons, each element in $\mathsf{A}^1(M)$ is the form $(\varphi, 1 \otimes \bm{\psi})$ with $\varphi \in \Omega^1(U)$ and $\bm{\psi} = \{\psi_{S}\}$ is a tuple of differential forms with $\psi_S \in \Omega^1(S)$.
    Since the differential acts componentwise, we see that the image of $d \colon \mathsf{A}^1(M) \rightarrow \mathsf{A}^2(M)$ and the linear hull of $\{(0,a_{2,S} \otimes 1) \, : \, S \in \pi_0(S)\}$ intersect in a trivial fashion.
    Thus, they are also a linear independent system in cohomology. 
\end{proof}
Since $M$ is simply connected, we can apply the `algorithm' to compute the minimal model of a simply connected topological space described in \cite[p.144 ff]{Felix2001RHT}, which yields the following result.
\begin{lemma}\label{lemma: minimal model in low degrees}
    If $\mathsf{M}_M = (\Lambda V_M,d)$ is the minimal model of $M$, then 
    \begin{align*}
        V^2_M &= H^2(M) = H^2(X) \oplus \mathrm{span}\{a_{2,S} \, : \, S \in \pi_0(\mathcal{S})\} & d = 0, \\
        V^3_M &= H^3(M) \oplus s \ker \cup \colon \mathsf{S}^2V^2_M \rightarrow H^4(M), & d \colon s \ker \cup \xrightarrow{\cong} \ker \cup.
    \end{align*}
    The model map $\mu \colon \mathsf{M}_M \rightarrow \mathsf{A}(M)$ satisfies 
    \begin{align*}
        & \mu(H^2(X)) \subseteq \mathsf{B}(M) = \Omega(U) \oplus_{\Omega(\partial U)} \bigoplus_{S\in \pi_0(\mathcal{S})} 1 \otimes \Omega(S),\\
        %&\mu(H^2(M)) \subseteq \ker d^2 \qquad \text{ and } \qquad \mu(H^3(M)) \subseteq \ker d^3, \\
        &\mu(a_{2,S}) = (0,a_{2,S} \otimes 1). 
    \end{align*}
    %%% we will see later, that this already determines a lot about the kernel.
\end{lemma}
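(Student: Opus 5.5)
We build the minimal model degree by degree, following the algorithm for simply connected spaces in \cite[p.144 ff]{Felix2001RHT}, and construct the model map $\mu$ alongside it. Since $M$ is simply connected, $V^1_M=0$ and $\Lambda V_M$ contains no elements of degree $1$. Hence the degree-$2$ elements of $\Lambda V_M$ are exactly $V^2_M$. Minimality forces $d=0$ on $V^2_M$, and $\mu$ must induce an isomorphism $V^2_M\cong H^2(\mathsf{A}(M))=H^2(M)$. The splitting $H^2(M)=H^2(X)\oplus\mathrm{span}\{a_{2,S}\}$ is taken directly from Lemma \ref{lem: Injection on Cohomology}.

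\textbf{Choosing $\mu$ on $V^2_M$.} We pick closed representatives carefully.
\begin{itemize}
\item For $a_{2,S}$ we use the representative $(0,a_{2,S}\otimes 1)$. It is closed and defines the class appearing in Lemma \ref{lem: Injection on Cohomology}.
\item For a class in $H^2(X)$ we first take a representative in $\mathsf{A}(X)$. In degree $2$ the summands $\Lambda V_{D^4}\otimes\Omega(S)$ have no component involving $b_3$ or $\vol_{D^4}$, since these have degrees $3$ and $4$. So every element of $\mathsf{A}^2(X)$ already lies in $\mathsf{B}(X)=\mathsf{B}(M)$.
\item Its image under $\mathsf{A}(\blowdown)$ is therefore the same element, now viewed in $\mathsf{B}(M)\subseteq\mathsf{A}(M)$. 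This gives $\mu(H^2(X))\subseteq\mathsf{B}(M)$.
\end{itemize}
Here we are identifying $H^2(X)$ with its image under the injection $H(\blowdown)$.

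\textbf{Degree $3$.} The algorithm adds generators $V^3_M$ so that two things hold. First, $H^3$ is hit: we add closed generators mapping onto representatives of a basis of $H^3(M)$. This is possible because $\Lambda V^{\le 2}$ has nothing in degree $3$. Second, we kill the kernel of $H^4(\Lambda V^2_M)=\mathsf{S}^2V^2_M\to H^4(M)$, which is the cup product map. For each basis element $z$ of $\ker\cup$, $\mu(z)$ is exact, say $\mu(z)=d\omega$. We add a generator $sz$ with $d(sz)=z$ and set $\mu(sz)=\omega$.

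This already makes $H^{\le 3}(\mu)$ an isomorphism and $H^4(\mu)$ injective on the part generated so far. No further degree-$3$ generators are needed, because nothing else in degree $4$ of $\Lambda V^{\le 3}$ can be a cocycle obstructing injectivity. Later generators live in degree $\ge 4$ and do not change $V^{\le 3}$.

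\textbf{Main obstacle.} The delicate point is making sure that the representatives chosen in degree $2$ are compatible with the fibre-product structure of $\mathsf{A}(M)$. Concretely, $(0,a_{2,S}\otimes 1)$ must actually be an element of $\mathsf{A}(M)$. This holds because $a_{2,S}$ restricts to $0$ in $\Omega(\partial U)$, by the model of the boundary inclusion in Example \ref{exmpl: Rational model disc bundles}. The remaining steps are routine once this is checked. Uniqueness of minimal models then lets us speak of \emph{the} minimal model with this presentation in low degrees.
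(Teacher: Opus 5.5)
Your proposal is correct and follows the same route as the paper: the paper only cites the minimal model algorithm for simply connected spaces in \cite[p.144 ff]{Felix2001RHT}, and you carry that algorithm out in degrees $2$ and $3$. Along the way you also justify the two claims about $\mu$ that the paper leaves implicit. First, $\mathsf{A}^2(X)=\mathsf{B}^2(X)$, so representatives of classes in $H^2(X)$ land in $\mathsf{B}(M)$. Second, $(0,a_{2,S}\otimes 1)$ lies in the fibre product because $a_2$ restricts to zero on the boundary.
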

\vspace{6pt}
We now describe the construction of the fibre bundles that will form the linear independent elements in $\pi_2(B\Diff(M))$.
For each $S \in \pi_0(\mathcal{S})$, we can resolve the `singularity' $S$ in a twisted fashion using the non-trivial bundle $\mathcal{DO}(k)$, which yields a fibre bundle $M \rightarrow E_S \rightarrow S^2$.
Formally, it arises out of the orbifold $X$ as the following pushout:
\begin{equation}\label{eq: Definition Twisted Bundle Construction}
     \xymatrix{ \partial \Tub(\mathcal{S}) \times S^2 \ar[rr] \ar[d] &&  \Bigl(\bigsqcup_{S \neq S' \in \pi_0(\mathcal{S})} S' \times D\mathcal{O}(k_{S'}) \times S^2 \Bigr) \sqcup \mathcal{DO}(k_S)  \ar[d] \\
    U \times S^2 \ar[rr] && E_S. }
\end{equation}
By the pushout property, the projection to $S^2$ induces a projection map $E_S \rightarrow S^2$.
Since $\partial\mathcal{DO}(k) = S^2 \times S^3/\Z_{k}$, the projection map turns $E_S$ into a fibre bundle with fibre $M$. 
The trivialisation of the boundary extends by construction to the `interior' $U \times S^2 \subset E_S$. 
Thus, the classifying map $f_{E_S}$ of $E_S$ is given by the composition
\begin{equation*}
    \xymatrix{ f_{E_S}\colon S^2 \ar[r] & BS^1 \ar[r]^-{B\twist} & B\Diff_\partial(D\mathcal{O}(k_S)) \ar[r]^-{B\extension_S} \ar[d] & B\Diff(M) \ar[d] \\
    && B\hAut_\partial(D\mathcal{O}(k_S)) \ar[r]^-{B\extension} & B\hAut(M),   }
\end{equation*}
where $\extension_S(\varphi)$ is the extension of $\id_S \times \varphi$ by the identity from $S \times D\mathcal{O}(k) \subseteq M$ to the rest of $M$. 

To understand which element the maps $f_{E_S}$ represents in $\pi_2(B\hAut(M))$, or equivalently, which elements their loop maps $\Omega f_{E_S}$ represent in $\pi_1( \Omega B\hAut(M), \id) \cong \pi_1(\hAut(M),\id)$, we consider the following homotopy-commutative diagram
\begin{equation*}
    \xymatrix{ S^1 \ar[rr]^-{\twist} \ar[d]_{\mathrm{Ad}(\id_{S^2})} && \hAut_\partial( D\mathcal{O}(k_S) ) \ar[rr]^-{\extension_S} \ar[d]^\simeq && \hAut(M) \ar[d]^\simeq \\
    \Omega S^2 \ar[rr]^-{\Omega B\twist} && \Omega B\hAut_\partial( D\mathcal{O}(k_S) ) \ar[rr]^-{\Omega f_{E_S}}  && \Omega B\hAut(M) }
\end{equation*}
and remember that $\mathrm{Ad}(\id_{S^2}) \colon \pi_1(S^1) \rightarrow \pi_1(\Omega S^2) = \pi_2(S^2)$ induces an isomorphism.
It is therefore enough to understand the pointed homotopy class of the upper composition.

With help the pushout model $\mathsf{A}(M)$ defined in (\ref{eq: pushout model}) and $H(S^1) \otimes \mathsf{A}(M)$ for the domain $S^1 \times M$,  it is fairly easy to write down a real model for the adjoint map $\extension_T \circ \twist \colon S^1 \times M \rightarrow M$.
Indeed, since it is the extension the map $\twist \times \id_T \colon S^1 \times D\mathcal{O}(k_T) \times T \rightarrow D\mathcal{O}(k_T) \times T$ by the projection to the second factor, a real model for $\extension_T(\twist)$ on the pushout model for $M$ is given by
\begin{equation}\label{eq: pushout model twist extension}
    \xymatrix{\mathsf{A}(M) \ar@{=}[rr] \ar[d] && \Omega(U) \oplus_{\Omega(\partial U)} \bigoplus_{S \in \pi_0(\mathcal{S})} \mathsf{M}_{S^2} \otimes \Omega(S)  \ar[d]^{1 \otimes \id_{\Omega(U)} \oplus \bigoplus_{S} 1 \otimes (\id + \delta_{ST} \theta^1_{\twist})\otimes \id  }  \\
    H(S^1) \otimes \mathsf{A}(M) \ar@{=}[rr] && H(S^1) \otimes \Omega(U) \oplus_{H(S^1) \otimes \Omega(\partial U)} \bigoplus_{S \in \pi_0(\mathcal{S})} H(S^1) \otimes \mathsf{M}_{S^2} \otimes \Omega(S)}
\end{equation}
from which we can easily read-off that the corresponding derivation is 
\begin{equation}\label{eq: twist derivation expression}
    \theta^1_{\extension_T(\twist)} = \xi \cdot a_{2,T} \otimes b_{3,T}^\vee,
\end{equation}
where $\xi$ is the real number in (\ref{cor: derivation of the twist}).
%Composition with the (truncated) minimal model $\mathrm{P}_3\mathsf{M}_{M} = \Lambda V^{\leq 3}_M\xrightarrow{\mu} \mathsf{A}(M) \rightarrow H(S^1)\otimes \mathsf{A}(M)$ yields the element $\theta^1_{\extension_S(\twist)} \circ \mu \in \Der(\mathrm{P}_3 \mathsf{M}_M,\mathsf{A}(M))$.
Observe that $\mathsf{B}(M) = \Omega(U) \oplus_{\Omega(\partial U)} \bigoplus_{S\in \pi_0(\mathcal{S})} 1 \otimes \Omega(S)$ is a differential graded subalgebra, and that $\theta^1_{\extension_T(\twist)}$ vanishes on this subalgebra for every $T \in \pi_0(\mathcal{S})$.
Observe further that each $\theta^1_{\extension_T(\twist)}$ vanished on $\ker d \colon \mathsf{A}^3(M) \rightarrow \mathsf{A}^4(M)$.
We compose the derivations with the model map from the 3-truncation of its minimal model $\mu \colon \mathrm{P}_3\mathsf{M}_M \rightarrow \mathsf{M}_M \rightarrow \mathsf{A}(M)$ to make the calculations for manageable, see Example \ref{emxpl: postnikov decomposition of Sullivan algebras} for the definition of $\mathrm{P}_3\mathsf{M}_M$.

Recall from the introduction that $\mathsf{N} \subseteq \pi_0(\mathcal{S})$ is the nice subset of path components with a homological partner, that is $S \in \mathsf{N}$ if and only if there is a path components $S' \in \pi_0(\mathcal{S})$ different from $S$ such that their homology classes generate the same subvector space $\R\cdot [S] = \R \cdot [S'] \subseteq H_{n-4}(X;\R)$.
\begin{prop}\label{prop: vs of twist derivation}
    The set of homology classes of derivations
    \begin{equation*}
        \{ [\theta^1_{\extension_S(\mathsf{tw})} \circ \mu] \, : \, S \in \mathsf{N} \} \subseteq H_1\bigl( \mathrm{Der}^\mu(\mathrm{P}_3\mathsf{M}_M,\mathsf{A}(M)) ,\delta \bigr)
    \end{equation*}
    spans an $|\mathsf{N}|$-dimensional subvector space in $H_1\bigl( \mathrm{Der}^\mu(\mathrm{P}_3\mathsf{M}_M,\mathsf{A}(M)) ,\delta \bigr)$.
\end{prop}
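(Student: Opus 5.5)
The plan is to compute the classes $[\theta^1_{\extension_T(\mathsf{tw})}\circ\mu]$ explicitly on the generators of $\mathrm{P}_3\mathsf{M}_M$ described in Lemma \ref{lemma: minimal model in low degrees}. Then we assume a linear relation $\sum_{T\in\mathsf{N}} c_T\,[\theta^1_{\extension_T(\mathsf{tw})}\circ\mu]=0$ and test it on two kinds of elements of $s\ker\cup$.

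\emph{Step 1: evaluating the derivations.} Put $\theta_T:=\theta^1_{\extension_T(\mathsf{tw})}\circ\mu$, a $\mu$-derivation of degree $1$.
\begin{itemize}
\item On $V^2_M$ it vanishes for degree reasons.
\item On the summand $H^3(M)\subseteq V^3_M$ it vanishes, because $\mu$ sends these generators into $\ker d\subseteq\mathsf{A}^3(M)$ and, by (\ref{eq: twist derivation expression}), $\theta^1_{\extension_T(\mathsf{tw})}$ kills $\ker d$.
\item For $w\in\ker\cup\subseteq\mathsf{S}^2V^2_M$, the form $\mu(sw)$ is a primitive of $\mu(w)$. Write its component in $\mathsf{M}_{S^2}\otimes\Omega(T)$ as $b_{3,T}\otimes f+a_{2,T}\otimes\alpha+1\otimes\beta$. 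Comparing the $a_{2,T}^2$-parts of $d\mu(sw)=\mu(w)$ forces $f$ to be the constant $c_T(w)$, the coefficient of $a_{2,T}^2$ in $\mu(w)$.
\end{itemize}
Here we use Lemma \ref{lemma: minimal model in low degrees}: $\mu(a_{2,S})=(0,a_{2,S}\otimes1)$, $\mu(H^2(X))\subseteq\mathsf{B}(M)$, and products of different $a_{2,S}$ vanish in the fibre product. Consequently $\theta_T(sw)=\xi\,c_T(w)\,a_{2,T}$ with $\xi\neq0$ from Corollary \ref{cor: derivation of the twist}.

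\emph{Step 2: the shape of a bounding derivation.} Suppose $\sum_T c_T\theta_T=\delta\Theta$ with $\Theta\in\Der^\mu_2(\mathrm{P}_3\mathsf{M}_M,\mathsf{A}(M))$. Then $\Theta(V^2_M)\subseteq\mathsf{A}^0(M)$, and on $x\in V^2_M$ we get $0=\delta\Theta(x)=d\Theta(x)$. Since $H^0(\mathsf{A}(M))=\R$, each $\Theta(x)$ is a constant. Hence, for quadratic $w=\sum x_iy_i$, the element $\Theta(w)$ is a linear combination of the $\mu(x)$, $x\in V^2_M$, with constant coefficients $\Theta(x_i)$. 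The relation evaluated on $sw$ becomes
\begin{equation*}
\textstyle\sum_T c_T\,\xi\, c_T(w)\, a_{2,T}= d\Theta(sw)-\Theta(w).
\end{equation*}
We pass to cohomology in degree $2$. By Lemma \ref{lem: Injection on Cohomology}, the classes $[a_{2,S}]$ are linearly independent from each other and from $H^2(X)$, so we may compare coefficients of each $[a_{2,S}]$.

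\emph{Step 3: two test elements.}
\begin{itemize}
\item For $S\neq S'$ the product $a_{2,S}a_{2,S'}$ is zero already in $\mathsf{A}(M)$, so $w=a_{2,S}a_{2,S'}\in\ker\cup$ and $c_T(w)=0$ for all $T$. The identity reads $0=-\Theta(a_{2,S})[a_{2,S'}]-\Theta(a_{2,S'})[a_{2,S}]$, which forces $\Theta(a_{2,S})=0$ for every $S$. This uses only that $\pi_0(\mathcal{S})$ has at least two elements, which holds as soon as $\mathsf{N}\neq\emptyset$.
\item Now let $S\in\mathsf{N}$ with partner $S'$. The class $[a_{2,S}^2]\in H^4(M)$ is a nonzero multiple of the image of the Poincaré dual of $[S]$: it is the square of the Thom class of the normal $\mathcal{O}(k_S)$-direction, pushed forward. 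Hence $\R[S]=\R[S']$ gives $[a_{2,S}^2]=\lambda[a_{2,S'}^2]$ with $\lambda\neq0$, and $w=a_{2,S}^2-\lambda a_{2,S'}^2\in\ker\cup$. Since $\Theta(a_{2,\cdot})=0$, the term $\Theta(w)$ vanishes. Comparing coefficients of $[a_{2,S}]$ gives $c_S\xi=0$, so $c_S=0$.
\end{itemize}
This yields linear independence of the $|\mathsf{N}|$ classes.

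The main obstacle I expect is the second test element. One must justify carefully that $[a_{2,S}^2]$ and $[a_{2,S'}^2]$ are proportional with nonzero factor exactly when $\R[S]=\R[S']$ in $H_{n-4}(X;\R)$, i.e.\ transfer the homological condition in $X$ to the cup-product structure of $M$ via the pushout model. One must also check that the $b_{3,T}$-coefficient of $\mu(sw)$ is indeed the constant $c_T(w)$, including compatibility with the gluing over $\Omega(\partial U)$. I would first settle this by computing $[a_{2,S}^2]$ against test classes through integration over fibres of $S\times D\mathcal{O}(k_S)\to S$.
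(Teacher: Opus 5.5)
Your proposal is correct and follows the paper's own proof. It uses the same two test elements of $s\ker\cup$: $s(a_{2,S}a_{2,S'})$ forces the bounding derivation to vanish on the $a_{2,S}$, and $s(a_{2,S}^2-\lambda a_{2,S'}^2)$ then forces the coefficients to vanish. The Poincar\'e-duality input comes, as in the paper, from pulling back the Thom form $(0,\mathrm{vol}_{D^4,S})$ to $(0,a_{2,S}^2/k_S)$. The only difference is a small, welcome simplification: you show $\Theta$ is constant on all of $V^2_M$ and pass to $H^2(M)$, so the exact terms $d\Theta(sw)$ drop out directly. The paper instead splits $\Theta$ into $F_1,F_2,F_3$ and the $a_{2,T}^\vee$-parts and argues at cochain level with $\mathsf{B}(M)$ against the ideal $\mathcal{I}$.
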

\begin{proof}
     We will show that if there is a set of coefficients $\{\lambda_S\,:\, S \in \mathsf{N} \}$ such that $\sum \lambda_S \theta_{\extension_S(\twist)}^1 \circ \mu$ lies in the image of $\delta_2$, then all coefficients must vanish.
     This implies, in particular, that the set $\{[\theta_{\extension_S(\twist)}^1 \circ \mu] \, : \, S \in \mathsf{N}\}$ is a linear independent system in $H_1(\Der^\mu(\mathrm{P}_3\mathsf{M}_M,\mathsf{A}(M)),\delta)$. 
    
    \vspace{6pt}

    From (\ref{eq: twist derivation expression}) we observe that the image of each linear combination of $\{\theta^1_{\extension_S(\twist)} \, : \, S \in \pi_0(\mathcal{S})\}$ lies in the ideal $\mathcal{I}$ generated by all $a_{2,S}$, more precisely,
    \begin{equation*}
        \mathcal{I} = \Omega(U) \oplus_{\Omega(\partial U)} \bigoplus_{S \in \pi_0(\mathcal{S})} a_{2,S}\mathsf{M}_{S^2} \otimes \Omega(S).
    \end{equation*}
    Clearly, this ideal intersects the subalgebra $\mathsf{B}(M)$ in a trivial fashion.

    Since derivation from a free algebra are uniquely determined by the image of the underlying vector space, we have 
    \begin{align*}
        \begin{split}
            \Der_2(\mathrm{P}_3\mathsf{M}_M,\mathsf{A}(M)) &\cong \mathrm{Hom}(H^2(X),\mathsf{B}^0(M)) \oplus \mathrm{Hom}(\mathrm{span}\{a_{2,S} \, : \, S \in \pi_0(\mathcal{S}\},\mathsf{B}^0(M)) \oplus \\
            &\quad \ \, \mathrm{Hom}(H^3(M),\mathsf{B}^1(M)) \oplus \mathrm{Hom}(s\ker \cup, \mathsf{B}^1(M))
        \end{split}
    \end{align*}
    because $\mathsf{A}^1(M) = \mathsf{B}^1(M)$ as $\mathsf{M}_{S^2}^1 = \{0\}$, and $\mathsf{A}^0(M) = \mathsf{B}^0(M)$ because $\mathsf{M}^0_{S^2} = \R\cdot 1$.

    Since $\mathsf{A}^0(M)$ is a subvector space of $\Omega^0(U) \oplus \R^{\pi_0(\mathcal{S})}$, its elements can be represented by tuples $(f,v)$ consisting of functions $f\in \Omega^0(U)$ and $v \in \R^{\pi_0(\mathcal{S})} = \mathrm{Map}(\pi_0(\mathcal{S}),\R))$ satisfying the boundary condition $f|_S = v(S)$.
    Hence, if we denote by $a_{2,S}^\vee$ denote the functional that contains $H^2(X)$ in its kernel, and that sends $a_{2,S}$ to $1$ and all other $a_{2,S'}$ to $0$, then $\mathrm{Hom}(\mathrm{span}\{a_{2,S}\},\mathsf{A}^0(M))$ is spanned by elements of the form $(f,v)a_{2,S}^\vee$.

    Now assume that $\theta^1 \circ \mu \in \mathrm{im} (\delta_2)$, so that we can find $F_1 \in \mathrm{Hom}(H^2(X),\mathsf{B}^0(M))$, $F_2 \in \mathrm{Hom}(H^3(M),\mathsf{B}^1(M))$, and $F_3 \in \mathrm{Hom}(s \ker \cup , \mathsf{B}^1(M))$ as well as a finite set $\{(f_T,v_T)a_{2,T}^\vee\}$ such that
    \begin{align}\label{eq: theta in image 1}
        \theta^1\circ \mu = \sum_{S \in \mathsf{N}} \lambda_S \theta^1_{\extension_S(\twist)} \circ \mu &= \sum_{j=1}^3 \delta_2(F_j) + \sum_{T \in \pi_0(\mathcal{S})} \delta_2\bigl(f_T,v_T)a_{2,T}^\vee\bigr) \\
        &= \sum_{j=1}^3 \delta_2(F_j) + \sum_{T \in \pi_0(\mathcal{S})} (df_T,0)a_{2,T}^\vee - (f_T,v_T)a_{2,T}^\vee \circ d\bigr).\nonumber
    \end{align}
    We make the following observations:
    \begin{itemize}
        \item[(i)] $\delta_2(F_j) = 0$ on $\mathrm{span}\{ a_{2,S} \, : \, S \in \pi_0(\mathcal{S}) \}$ because these elements are closed and, by definition, $F_j$ vanishes on this subset.
        \item[(ii)] The derivation $\theta^1 \circ \mu$ is uniquely determined by its restriction to $\mathrm{P}_3\mathsf{M}_M^3 = \mathsf{M}_M^3$, the vector space of elements of degree $3$, because it vanishes on $\mathrm{P}_3\mathsf{M}^2_M = \mathsf{M}_M^2$.
        \item[(iii)] The restrictions $\delta_2(F_2)|_{\mathrm{P}_3\mathsf{M}^3_M} = dF_2|_{\mathrm{P}_3\mathsf{M}^3_M}$ and $\delta_2(F_3)|_{\mathrm{P}_3\mathsf{M}^3_M} = dF_3|_{\mathrm{P}_3\mathsf{M}^3_M}$ takes values in $\mathsf{B}^2(M)$.
        \item[(iv)] Under the decomposition
        \begin{equation*}
            \begin{split}
                \mathsf{S}^2(H^2(M)) = \mathsf{S}^2(H^2(X)) &\oplus H^2(X) \otimes \mathrm{span}\{ a_{2,S} \, : \, S \in \pi_0(\mathcal
            S) \} \\
            &\oplus \mathsf{S}^2(\mathrm{span}\{ a_{2,S} \, : \, S \in \pi_0(\mathcal
            S) \})
            \end{split}
        \end{equation*}
        the derivation $F_1$ vanishes on the third summand, while the derivation $\theta^1 \circ \mu$ vanishes on the second and third summand. 
    \end{itemize}

    From these observations, we draw the following conclusions:
    By plugging $a_{2,T}$ into (\ref{eq: theta in image 1}), we derive together with observation (i) that $0 = 0 + (df_T,0)$, which implies that $f_T$ is closed and hence a constant function. Furthermore, $v_T = f_T|_{\partial U}$ is a constant function, too, with value $v_T$.

    Furthermore, since $\theta \circ \mu$ takes values in the ideal $\mathcal{I}$, which intersects the subalgebra $\mathsf{B}(M)$ in a trivial fashion, we conclude together with observation (ii), (iii) that 
    \begin{equation}\label{eq: consequences for delta(F)}
        \delta_2(F_2) = \delta_2(F_3) = 0, \quad \text{ and } \quad \delta_2(F_1)|_{\mathrm{P}_3\mathsf{M}_M^3} = -F_1 \circ d |_{\mathrm{P}_3\mathsf{M}_M^3}.
    \end{equation}
    Thus, equation (\ref{eq: theta in image 1}) simplifies to 
    \begin{equation*}
        \theta^1 \circ \mu = \sum_{S \in \pi_0(\mathcal{S})} \lambda_S \theta^1_{\extension_S(\twist)} \circ \mu = -\sum_{T \in \pi_0(\mathcal{S})}  v_T(1,1)a_{2,T}^\vee \circ d + F_1 \circ d|_{\mathrm{P}_3\mathsf{M}^3_M}.
    \end{equation*}
    Since $(0,a_{2,S_0}) \cdot (0,a_{2,S_1}) = 0 \in \mathsf{A}(M)$ if $S_0 \neq S_1$, we have $a_{2,S_0}\cdot a_{2,S_1} \in \ker \cup \subseteq \mathrm{P}_3\mathsf{M}_M^4$.
    Denote the corresponding element in $\mathrm{P}_3\mathsf{M}_M^3$ by $s(a_{2,S_0}\cdot a_{2,S_1}) \in s \ker \cup$.
    Since $\mu$ is a chain map, we deduce that $\mu(s(a_{2,S_0}\cdot a_{2,S_1})) \in \ker d \subseteq \mathsf{A}^3(M)$ on which $\theta^1$ vanishes.
    Evaluating the simplified equation at $s(a_{2,S_0}\cdot a_{2,S_1})$, yields 
    \begin{align*}
        0 &= \theta^1(0) = \theta^1\circ \mu(s(a_{2,S_0}\cdot a_{2,S_1})) =   -\sum_{T \in \pi_0(\mathcal{S})}  v_T(1,1)a_2^\vee(a_{2,S_0}\cdot a_{2,S_1}) + F_1(a_{2,S_0}a_{2,S_1})\\
        &= -\sum_{T \in \pi_0(\mathcal{S})} \delta_{TS_0}v_T(1,1)(0,a_{2,S_1}) + \delta_{TS_1} v_T(1,1)(0,a_{2,S_0}) + 0 \\
        &= -v_{S_0}(0,a_{2,S_1}) - v_{S_1}(0,a_{2,S_0}),
    \end{align*}
    where we used observation (iv) to conclude that $F_1(a_{2,S_0}\cdot a_{2,S_1}) =0$.
    Since $\{(0,a_{2,S}) \, : \, S \in \pi_0(\mathcal{S}) \} \subseteq \mathsf{A}^2(M)$ are linearly independent, we deduce $v_S = 0$ for all $S \in \pi_0(\mathcal{S})$.

    \vspace{6pt}

    We are left a collection of real numbers $\lambda_S$ such that
    \begin{equation}\label{eq: hypothetical lin dep}
        \sum_{S \in \mathsf{N}} \lambda_{S} \theta^1_{\extension_{S}(\twist)} \circ \mu = \delta_2(F_1)
    \end{equation}
    for some derivation $F_1 \in \mathrm{Hom}(H^2(X), \mathsf{B}^0(M)) \subseteq \mathrm{Der}_2(\mathrm{P}_3\mathsf{M}_M,\mathsf{A}(M))$.
    We wish to prove that all $\lambda_S = 0$ for all $S \in \mathsf{N}$.

    Let $S_0 \neq S_1 \in \mathsf{N}$ be two path components with $\R\cdot[S_0] = \R \cdot [S_1] \in H_{n-4}(X;\R)$.  
    By Poincar\'e-duality for smooth, closed and oriented orbifolds \cite{Satake1956Orbifolds}, we deduce that the Thom forms of their tubular neighbourhood generate the same cohomology class (up to a non-zero factor) inside $H^4(X,\R)$.
    In the pushout model for $X$, the Thom form of the tubular neighbourhood is modelled by $(0,\vol_{D^4,S})$.
    Since the dga-homomorphism $\Lambda V_{D^4} \rightarrow \mathsf{M}_{S^2} = \mathsf{M}_{D\mathcal{O}(k)}$ induced by the resolution map $\blowdown \colon D\mathcal{O}(k) \rightarrow D^4/\Z_k$ sends maps $\vol_{D^4}$ to $a^2/k$, the dga-homomorphism $\mathsf{A}(X) \rightarrow \mathsf{A}(M)$ induced by the resolution map $\blowdown \colon M \rightarrow X$ sends $(0,\vol_{D^4,S})$ to $(0,a_{2,S}^2/k_S)$.
    We therefore conclude that there exists non-zero coefficients $\nu_{S_0}$ and $\nu_{S_1}$ such that
    \begin{equation*}
        \nu_{S_0}a_0^2 + \nu_{S_1}a_1^2 \in \ker \bigl( \cup \colon \mathsf{S}^2V^2_M \rightarrow H^4(M) \bigr),
    \end{equation*}
    so $s(\nu_{S_0}a_{2,S_0}^2 + \nu_{S_1}a_{2,S_1}^2) \in s\ker \cup \subseteq V^3_M$.

    Obviously, the relation $\sim$ on $\pi_0(\mathcal{S})$ that relates two path components $S$ with $S'$ if and only if their homology classes generate the same vector space in $H_{n-4}(X;\R)$ is an equivalence relation.
    By definition, the set $\mathsf{N}$ partitions into a disjoint union of equivalence classes that contain at least two elements.
    The previous discussion implies that each such equivalence class $S_0/\sim$ generates an $(\# S_0 -1)$-dimensional vector space inside $s\ker \cup$.  

     %$\ker \cup \colon \mathsf{S}^2V^2_M \rightarrow H^4(M)$ spanned by elements of the form $\nu_0a_{2,S_0}^2 + \nu_ja_{2,S_j}^2$ with $\nu_0,\nu_j \neq 0$.

    Since the model map $\mathrm{P}_3\mathsf{M}_M  \xrightarrow{\mu} \mathsf{A}(M)$ is a dga-homomorphism, we necessarily derive from Lemma \ref{lemma: minimal model in low degrees} that
    \begin{equation*}
        \begin{split}
        \mu( s(\nu_{S_0}a_{2,S_0}^2 + \nu_{S_1}a_{2,S_1}^2) ) &= (\varphi_{S_0,S_1} , \nu_{S_0}b_{3,S_0}\otimes 1 + \nu_{S_1}b_{3,S_1} \otimes 1 + 1 \otimes x_{S_0} + 1 \otimes x_{S_1})\\
        & \quad + \sum_{S \in \pi_0(\mathcal{S})} (\varphi_S,a_{2,S} \otimes \psi_S)
        \end{split}
    \end{equation*}
    where $\varphi_S, \varphi_{S_0,S_1} \in \Omega^3(U)$, $x_j \in \Omega^3(S_j)$, and $\psi_S \in \Omega^1(S)$ are closed forms. 

    Plugging $s(\nu_{S_0}a_{2,S_0}^2 + \nu_{S_1}a_{2,S_1}^2)$ into the equation (\ref{eq: hypothetical lin dep}) and using (\ref{eq: twist derivation expression}) together with observation (iv) and (\ref{eq: consequences for delta(F)}) yields
    \begin{align*}
        0 ={}& \delta_2(F_1)(s(\nu_0a_{2,S_0}^2 + \nu_1 a_{2,S_1}^2)) = - F_1(\nu_0a_{2,S_0}^2 + \nu_1 a_{2,S_1}^2) = \sum_{S \in \mathsf{N}} \lambda_S\theta^1_{\extension_S(\twist)} \circ \mu(s(\nu_0a_{2,S_0}^2 + \nu_1 a_{2,S_1}^2)) \\
        ={}&\sum_{S \in \mathsf{N}} \lambda_S\theta^1_{\extension_S(\twist)} ( \nu_{S_0}b_{3,S_0} + \nu_{S_1}b_{3,S_1} ) = \lambda_{S_0}(0,\nu_{S_0}a_{2,S_0}) + \lambda_{S_1}(0,\nu_{S_1}a_{2,S_1}).
    \end{align*}
    By linear independence of $\{(0,a_{2,S}) \, : \, S \in \pi_0(\mathcal{S})\}$ inside $\mathsf{A}^2(M)$, we deduce that $\lambda_{S_0} = \lambda_{S_1} = 0$.  
\end{proof}
%
%The proof of Theorem \ref{Main thm: Orbifold Robustness} is now rather easy.
\begin{proof}[Proof of Theorem \ref{Main thm: Orbifold Robustness}]
    Since the element $[f_{E_S}] \in \mathrm{im}[\pi_2(B\Diff(M)) \rightarrow \pi_2(B\hAut(M))]\otimes \R$ corresponds to $[\extension_{S}(\twist)]$ under the isomorphism $\pi_2(B\hAut(M)) \cong \pi_1(\hAut(M),\id)$, we deduce from Theorem \ref{Main Thm: Homotopy Automorphism via Derivations} (with $X=M$ and $A = \emptyset$ and the real model $\mathsf{A}(M)$ defined in (\ref{eq: pushout model}) that the set of all classes $\{\theta^1_{[\extension_S(\twist)]} \, : S \in \pi_0(\mathcal{S})\}$ generate a subvector space in $H_1(\Der(\mathsf{M}_M,\mathsf{A}(M)),\delta)$.
    The inclusion, $\mathsf{P}_3\mathsf{M}_M \rightarrow \mathsf{M}_M$
    yields a linear map $H_1(\Der(\mathsf{M}_M,\mathsf{A}(M)),\delta) \rightarrow H_1(\Der(\mathrm{P}_3\mathsf{M}_M,\mathsf{A}(M)),\delta)$ that maps the subvector space onto an $\mathsf{N}$-dimensional subvector space $\{\theta^1_{[\extension_S(\twist)]} \circ \mu \, : S \in \pi_0(\mathcal{S})\}$ by Proposition \ref{prop: vs of twist derivation}.
\end{proof}

\section{Applications to $\GTwo$-geometry}\label{Section: G2 Applications}

In this section we apply the previously established topological theory to $\GTwo$-moduli spaces. 
In particular, we will give a proof of Theorem \ref{Main Thm: G2 Manifold fibration detection} and derive Theorem \ref{Main Thm: Detection G2 Moduli Space} from it.
The latter theorem will then be applied to it so a sample of $\GTwo$-manifolds constructed in \cite{Joyce1996CompactG2II}.

We start with the proof of Theorem \ref{Main Thm: G2 Manifold fibration detection}, which is a fairly easy consequence of our topolgical result Theorem \ref{Main thm: Orbifold Robustness}.
\begin{proof}[Proof of Theorem \ref{Main Thm: G2 Manifold fibration detection}]
    Let $T^7/\Gamma$ be a simply connected orbifold and let $\mathcal{S}$ be the union of all path components $S$ of the subspace of singular points that have a tubular neighbourhood $\mathrm{Tub}(S)$ that is diffeomorphic to $T^3 \times D^4/\Z_2$.
    Assume that for $T^7/\Gamma$, there exists a Resolution data ($R$-data) in the sense of Joyce \cite[Definition 14.4.1]{Joyce2000SpecialHolonomy}.
    We resolve all singularities not belonging to $S$ according to the construction presented in \cite[Section 11.4]{Joyce2000SpecialHolonomy} to obtain a orbifold $X$ in which $\mathcal{S}$ is the remaining singular set\footnote{Alternatively, carry out Joyce's construction to obtain $M$ and blow down the resolution of the singularities belonging to $\mathcal{S}$ to obtain  $X$}.
    It comes with a comparison map $\pi \colon X \rightarrow T^7/\Gamma$.
    Because we resolved all singular points not having property (i), the  closed, smooth orbifold $X$ is tailor-made for our purpose by construction.

    We need to prove that the number of all path components with a homological partner does not decrease under this construction. 
    The Poincar\'e-dual of the homology class represented by $T^3 \subset T^3 \times D^4 /\Z_2 \subseteq T^7/\Gamma$ is a four form whose (compact) support is lies in the interior of $T^3 \subset T^3 \times D^4 /\Z_2$.
    By assumption, the tubular neighbourhood $T^3 \times D^4/\Z_2$ does not intersect the tubular neighbourhoods of the remaining singularities, so $\pi \colon \pi^{-1}(T^3 \times D^4/\Z_2) \rightarrow T^3 \times D^4/\Z_2$ is a homeomorphism (that is smooth away from the `cone-tip' $T^3 \times \{0\}$).
    Thus, the Poincar\'e dual pulls back to a differential form whose Poincar\'e dual homology class is represented by $\pi^{-1}(T^3\times \{0\})$.
    It follows that two path components $S \cong T^3$ and $S' \cong T^3$ are homologous in $T^7/\Gamma$ if and only if they are homologous in $X$.
    Theorem \ref{Main thm: Orbifold Robustness} applied to $X$ now yields the claim.
\end{proof}
In the formulation of the next results, which is a refined formulation of Theorem \ref{Main Thm: Detection G2 Moduli Space}, we use notation from the previous section.
\begin{proof}[Proof of Theorem \ref{Main Thm: Detection G2 Moduli Space}]
    The underlying smooth manifold $M$ is obtained from $T^7/\Gamma$ by replacing $S \times D^4/\Z_2$ by $S \times D\mathcal{O}(-2)$ for all $S \in \pi_0(\mathcal{S})$ (and resolving the remaining singularities as explained in \cite[Chapter 11]{Joyce2000SpecialHolonomy}).
    Following Section 6.1 and Section 6.2 in \cite{crowley2025PathComponents}\footnote{In \cite{crowley2025PathComponents}, the spaces $D\mathcal{O}(-2)$ and $\mathcal{DO}(-2)$ are denoted by $EH_{\leq 1}$ and $\mathcal{EH}_{\leq 1}$}, the bundle
    \begin{equation*}
        E_{M,\{S\}} = \bigl(M \setminus S \times D\mathcal{O}(-2) \bigr) \cup_{ S \times \RP^3 \times S^2  } \cup S \times \mathcal{DO}(-2) \rightarrow S^2
    \end{equation*}
    carries a fibre-wise torsion-free $\GTwo$-structure.

    The underlying bundle $E_{M,\{S\}}$ can be alternatively constructed by resolving all singularity components of $T^7/\Gamma$ not belonging to $\mathcal{S}$ to obtain a closed, smooth, simply-connected orbifold $X$ that is tailor-made for our purpose.
    The bundle $E_{S} \rightarrow S^2$ obtained from the construction  (\ref{eq: Definition Twisted Bundle Construction}) obviously agrees with $E_{M,\{S\}}$.
    By Theorem \ref{Main Thm: G2 Manifold fibration detection}, their classifying maps generate an $|\mathsf{N}|$-dimensional vector space inside $\pi_2(B\Diff(M)_0)\otimes \R$.
    Since the fibre bundles carry a fibre-wise torsion free $\GTwo$-structure, this subgroup lifts to an $|\mathsf{N}|$-dimensional subvectorspace in $\pi_2(\hGTwoModuli{M})\otimes \R$, and since the homomorphism  $\pi_2(\hGTwoModuli{M}) \rightarrow \pi_2(\GTwoModuli{M})$ induced by the obvious comparison map is injective, see \cite[Theorem B]{crowley2025PathComponents}, the claim follows.
\end{proof}
We apply this theorem to the examples presented in \cite{Joyce1996CompactG2II}.
\begin{example}\label{exmpl: }
    This is the first class of examples of generalised Kummer constructions discussed in Section 3.1 in \cite{Joyce1996CompactG2II}.
    More precisely, we are discussing Example 3 and Example 4 in detail while providing a sketch for Example 6 in loc. cit. 
    Example 5 in loc. cit. concerns $\GTwo$-manifolds that are not simply connected, so it falls out of our set-up.
    
    The general set-up is the following: For $T^7 = (\R/\Z)^7$ with coordindates $x_1,\dots,x_7$, Joyce considers the group $\Gamma \cong \Z_2^3$ generated by the involutions
    \begin{align*}
        \alpha\bigl( x_1,\dots,x_7\bigr) &= ( -x_1,-x_2,-x_3,-x_4,x_5,x_6,x_7) \\
        \beta\bigl( x_1,\dots,x_7\bigr) &= ( b_1-x_1,b_2-x_2,x_3,x_4,-x_5,-x_6,x_7) \\
        \gamma\bigl(x_1,\dots,x_7\bigr) &= ( c_1-x_1,x_2,c_3-x_3,x_4,-x_5,x_6,-x_7)
    \end{align*}
    with $b_1,b_2,c_1,c_3,c_5 \in \{0,1/2\}$.
    This implies that their fixed-point sets are given by
    \begin{align*}
        \mathrm{Fix}(\alpha) &= \left\{0, \frac{1}{2}\right\}^4 \times T^3(567), \\
        \mathrm{Fix}(\beta) &= \left\{ \left( \frac{b_1+\varepsilon_1}{2}, \frac{b_2+\varepsilon_2}{2}, \frac{\varepsilon_5}{2}, \frac{\varepsilon_6}{2}  \right) \, : \, \varepsilon_j \in \{0,1\}\right\} \times T^3(347), \\
        \mathrm{Fix}(\gamma) &= \left\{ \left( \frac{c_1+\varepsilon_1}{2}, \frac{c_3+\varepsilon_3}{2}, \frac{c_5 + \varepsilon_5}{2}, \frac{\varepsilon_7}{2}  \right) \, : \, \varepsilon_j \in \{0,1\}\right\} \times T^3(246).
    \end{align*}
    From this description it is clear any two canonical identifications of $T^3 \cong (y_1,y_2,y_3,y_4) \times T^3(567)$ yield homotopic maps $T^3 \rightarrow T^7$.
    Thus, each two path components in $\mathrm{Fix}(\alpha)$ generate the same homology class in $H_3(T^7;\R)$ and so their images agree in $H_3(T^7/\Gamma;\R)$.
    The same holds true for the other two group elements.

    In the examples Joyce considers, the other group elements of $\Gamma$ do not have fixed-points, so the singular set of $T^7/\Gamma$ is the image of the fixed-point sets described above under the quotient map $T^7 \rightarrow T^7/\Gamma$.
    It remains to read off the set $\mathcal{S}$ of path components that have a tubular neighbourhood of the form $T^3 \times D^4/\Z_2$ and the nice subset $\mathsf{N}$ from Joyce's article and derive the conclusion from Theorem \ref{Main Thm: Detection G2 Moduli Space}.
    \begin{itemize}
        \item[(i)] In Example 3 in \cite{Joyce1996CompactG2II}, the set $\mathcal{S}$ consists of the image of $\mathrm{Fix}(\alpha)$, $\mathrm{Fix}(\beta)$, and $\mathrm{Fix}(\gamma)$. 
        From the discussion above, we conclude $\mathsf{N} = \pi_0(\mathcal{S})$, which is a set of $12$-elements.
        Theorem \ref{Main Thm: Detection G2 Moduli Space} now implies that $\Z^{12} \subseteq \pi_2( \GTwoModuli{M} )$.
        \item[(ii)] In Example 4 in \cite{Joyce1996CompactG2II}, the set $\mathcal{S}$ is generated by the image of $\mathrm{Fix}(\alpha)$ and $\mathrm{Fix}(\beta)$, and the discussion above shows that $\mathsf{N} = \pi_0(\mathcal{S})$, which is a set of $8$ elements.
         Theorem \ref{Main Thm: Detection G2 Moduli Space} now implies that $\Z^{8} \subseteq \pi_2( \GTwoModuli{M} )$.
    \end{itemize}
    Example 6 in \cite{Joyce1996CompactG2II} uses an additional involution 
    \begin{equation*}
        \delta(x_1,\dots,x_7) = ( 1/2 + x_1, x_2, 1/2 + x_3, 1/2 + x_4, 1/2+ x_5, x_6, x_7)
    \end{equation*}
    and chooses $(b_1,b_2,c_1,c_3,c_5) = (1/2,0,1/2,0,1/2)$.
    The only elements in the group $\langle \alpha, \beta,\gamma,\delta\rangle \cong \Z_2^4$ that have fixed-points are $\alpha,\beta,\gamma,$ and $\alpha\beta\delta$ and, according to Joyce, only $\mathrm{Fix}(\gamma)$ and $\mathrm{Fix}(\alpha\beta\delta)$ contribute each to two path components in $T^7/\Gamma$ which lie in $\mathcal{S}$.
    The fixed-point set of $\alpha\beta\delta$ is given by
    \begin{equation*}
        \mathrm{Fix}(\alpha\beta\delta) = \left\{ \left( \frac{1/2+\varepsilon_3}{2}, \frac{1/2+\varepsilon_4}{2}, \frac{1/2 + \varepsilon_5}{2}, \frac{\varepsilon_6}{2}  \right) \, : \, \varepsilon_j \in \{0,1\}\right\} \times T^3(127)
    \end{equation*}
    and hence we have $\pi_0(\mathcal{S}) = \mathsf{N}$ with $|\mathsf{N}|=4$.
\end{example}
\begin{example}
    We now discuss the Examples 7-11 in Section 3.2 of \cite{Joyce1996CompactG2II} except Example 10, which is not simply-connected.
    The general set-up is the following: Let $\R^7 = \C^3 \times\R$ with standard complex coordinates $z_1,z_2,z_3$ and real coordinate $x$.
    Let $\Lambda \cong \Z^6$ be a lattice and set $T^7 = (\C^3 \times \R)/(\Lambda \times \R)$.
    For two complex numbers $u,v$ with $u^a = v^a =1$ for some natural number $a$, Joyce considers the isometries $\alpha$, $\beta$ defined by
    \begin{align*}
        \alpha(z_1,z_2,z_3,x) &= (uz_1,vz_2,\overline{uv}z_3,x + a^{-1}), \\
        \beta(z_1,z_2,z_3,x) &= (-\bar{z}_1, - \bar{z}_2, - \bar{z}_3, -x).
    \end{align*}
    Under the assumption that the two isometries $\alpha$ and $\beta$ preserve $\Lambda \times \R$ they descend to an action of the dihedral group $\Gamma = \langle \alpha,\beta\rangle \cong D_{2a}$ on $T^7$ that preserves the standard $\GTwo$-structure.
    Joyce observed that $\alpha^j$ does not have fixed-points if $\alpha^j \neq 1$ as it is a translation.
    Furthermore, if $a$ is odd, then $\mathrm{Fix}(\beta\alpha^j)$ and $\mathrm{Fix}(\beta)$
    generated the same singularities and if $a$ is even, then the singularity components in $T^7/D_{2a}$ partition into the ones generated by $\mathrm{Fix}(\beta)$ and $\mathrm{Fix}(\beta\alpha)$.

    Independent of the parity of $a$, the tubular neighbourhoods of all fixed-points are of the form $T^3\times D^4/\Z_2$, see \cite[p.356]{Joyce1996CompactG2II}, which is precisely what we want.
    
    \noindent\underline{Example 7:}
    Here, $u=v=\mathrm{e}^{2\pi\iu/3}$, $a=3$ the lattice $\Lambda = \Z^3 \oplus \mathrm{e}^{2\pi\iu/3}\Z^3$.
    We have $\mathrm{Fix}(\beta) = \R^3/\Z^3 \times \{0\}^3 \times \{0,1/2\} $.
    Clearly, the two tori generate the same homology class in $T^7/D_{2a}$ because the obvious two embeddings are homotopic.
    Thus $\pi_0(\mathcal{S}) = \mathsf{N}$ and $|\mathsf{N}| = 2$.
    It follows that $\pi_2(\GTwoModuli{M}) \supseteq \Z^2$.

    \noindent\underline{Example 8:}
    This one is more interesting: $u = v = \mathrm{e}^{\pi\iu}$, $a=6$, and $\Lambda$ as in Example 7.
    Now $\mathrm{Fix}(\beta)$ is the same as in Example 7, but now $\alpha^3$ identifies the two different component with each other so that they generate a single component in $T^7/D_{12}$.
    Likewise the two components
    \begin{equation*}
       \mathrm{Fix}(\beta\alpha) = \{ (r_1\mathrm{e}^{\pi\iu/6},r_2\mathrm{e}^{\pi\iu/6}, r_1\iu\mathrm{e}^{\pi\iu/3},y) \, : y \in \{3/12,5/12\} \}    
    \end{equation*}
    contributes a single component in $T^7/D_{12}$.
    However, the two path components generate different homology classes in $H_3(T^7/D_{2a};\R) = H_3(T^7;\R)^{D_{2a}}$  %% the equality is an transfer argument that can be found in Bredon - Compact trafo groups p.119
    because the transfer applied to a single component yield elements that are not co-linear over the real numbers.
    Thus, $\mathsf{N}=\emptyset$ and Theorem \ref{Main Thm: Detection G2 Moduli Space} is inconclusive.

    \noindent\underline{Example 9:} 
    Here, $u=v=\iu$, $a=4$ and $\Lambda = \Z^3 \oplus \iu \Z^3$.
    Decompose $z_j = s_j + \iu t_j$ into real and imaginary part.
    Joyce calculated the fixed-point sets to be
    \begin{equation*}
        \mathrm{Fix}(\beta) = \{(s_1 + \varepsilon_1\iu,s_2 + \varepsilon_2\iu,s_3 + \varepsilon_3\iu, \varepsilon_4) \, : \, \varepsilon_j \in \{0,1/2\}\}  
    \end{equation*}
    while 
    \begin{equation*}
        \mathrm{Fix}(\beta\alpha) = \{(s_1 - s_1\iu,s_2 - s_2\iu, t_3 + \varepsilon_3 +t_3\iu, \varepsilon_4) \, : \, \varepsilon_3 \in \{0,1/2\}, x \in \{3/8,7/8\}\}. 
    \end{equation*}
    Since the involution $\alpha^2$ acts freely on $\mathrm{Fix}(\beta)$, the 16 components give rise to eight components in $T^7/\Gamma$. Moreover, all fixed-point components have isotopic inclusions and therefore generate the same homology class.

    Likewise, the four fixed-point components of $\mathrm{Fix}(\beta\alpha)$ contribute to two components in $T^7/\Gamma$ generating the same homology class.
    It follows that $|\mathsf{N}| =\pi_0(\mathcal{S})| = 10$.
    %% Add Example 13 and 14 for the version that will be submitted for publication.

    \noindent\underline{Example 11:}
    Here $u = \mathrm{e}^{\pi\iu/3}$, $v= \mathrm{e}^{2\pi\iu/3}$, $a=6$, and $\Lambda$ the lattice given by
    \begin{equation*}
        \Lambda = (\Z + \mathrm{e}^{2\pi\iu/3}\Z)\oplus (\Z + \mathrm{e}^{2\pi\iu/3}\Z)\oplus (\Z + \iu\Z).
    \end{equation*}
    Joyce computed the fixed-point sets to be
    \begin{equation*}
        \mathrm{Fix}(\beta) = \left\{ (s_1,s_2,s_3 + \varepsilon_3\iu,\varepsilon_4) \, : \, \varepsilon_j \in \{0,1/2\}\right\}
    \end{equation*}
    while
    \begin{equation*}
        \mathrm{Fix}(\beta\alpha) = \left\{ (\lambda_1\mathrm{e}^{\pi\iu/3}, \lambda_2\mathrm{e}^{\pi\iu/6},\varepsilon_3 + t_3\iu,\varepsilon_4) \, : \, \lambda_j,t_3 \in \R, \varepsilon_3 \in \{0,1/2\},\varepsilon_4\in \{5/12,11/12\}\right\}.
    \end{equation*}
    Thus, $\mathrm{Fix}(\beta)$ and $\mathrm{Fix}(\beta\alpha)$ both contribute two singularity components to $T^7/\Gamma$.
    We conclude $|\mathsf{N}| = |\pi_0(\mathcal{S})| = 4$.

    \noindent\underline{Example 13:}
    Set $p = \mathrm{e}^{2\pi\iu/7}$, $u=p$, and $v = p^2$ so that $a=7$.
    In this way, the element $\alpha$ acts on $\C^3 \oplus \R$ as
    \begin{equation*}
        \alpha(z_1,z_2,z_3,x) = (pz_1,p^2z_2,p^4z_3,x + 1/7),
    \end{equation*}
    so the group $\Gamma = \langle \alpha,\beta\rangle$ preserves the lattice
    \begin{equation*}
        \Lambda \oplus \Z= \mathrm{span}_{\Z}\bigl\{ (p^j,p^{2j},p^{4j}) \, : \, 1  \leq j \leq 6 \bigr\} \oplus \Z \subseteq \C^3 \oplus \R
    \end{equation*}
    and therefore descends to a group action on $T^7 = \C^3/\Lambda \times S^1$.
    Since $a =7$ is odd, the singularities of $T^7/\Gamma$ are generated by the image of the fixed-points of $\beta$.
    An explicit calculation reveals
    \begin{equation}
        \mathrm{Fix}(\beta) = \left\{ \left( \mathrm{Re}(p) + t_1\iu, \mathrm{Re}(p^2) + t_2\iu, \mathrm{Re}(p^4) + t_3\iu, \varepsilon/2 \right) \, : \, t_j \in \R, \varepsilon \in \{0,1\} \right\},
    \end{equation}
    so it consists of two disjoint three-dimensional tori with isotopic embeddings into $T^7$.
    We conclude that $\pi_0(\mathcal{S}) = \mathsf{N}$ with $|\mathsf{N}| = 2$.

    \noindent\underline{Example 14:}
    Although this example is pretty close to Example 7, it falls out of the setup described above. 
    However, Joyce proved nonetheless that the sigular set of $T^7/\Gamma$ is generated by the fixed-points of $\beta$, which are the same as in Example $7$.
    However, since $\Gamma$ is a bigger group that in Example $7$, they generate only two connected components in $T^7/\Gamma$ with a tubular neighbourhood of the form $T^3 \times D^4/\Z_2$.
    The embeddings are isotopic, so that $\pi_0(\mathcal{S}) = \mathsf{N}$ with $|\mathsf{N}| = 2$.
\end{example}

\printbibliography

\end{document}